\definecolor{cardinalred}{RGB}{140,21,21}
\newcommand*{\ob}[1]{%
	\makebox[0pt][l]{%
		\kern.3em
		\raisebox{1.8ex}{\relsize{-3}{$\circ$}}
	}%
	#1%
}
\newcommand\buildcircleland[1]{%
	\begin{tikzpicture}[anchor=base,baseline=-1pt, inner sep=0, outer sep=0,align=center]
		\node[draw,circle,align=center] (X) {$\sim$};
	\end{tikzpicture}%
}
\newsavebox\myboxA
\newsavebox\myboxB
\newlength\mylenA
\newcommand*\xoverline[2][0.75]{%
	\sbox{\myboxA}{$\m@th#2$}%
	\setbox\myboxB\null
	\ht\myboxB=\ht\myboxA%
	\dp\myboxB=\dp\myboxA%
	\wd\myboxB=#1\wd\myboxA
	\sbox\myboxB{$\m@th\overline{\copy\myboxB}$}
	\setlength\mylenA{\the\wd\myboxA}
	\addtolength\mylenA{-\the\wd\myboxB}%
	\ifdim\wd\myboxB<\wd\myboxA%
	\rlap{\hskip 0.5\mylenA\usebox\myboxB}{\usebox\myboxA}%
	\else
	\hskip -0.5\mylenA\rlap{\usebox\myboxA}{\hskip 0.5\mylenA\usebox\myboxB}%
	\fi}
\providecommand*{\cupdot}{%
	\mathbin{%
		\mathpalette\@cupdot{}%
	}%
}
\newcommand*{\@cupdot}[2]{%
	\ooalign{%
		$\m@th#1\cup$\cr
		\hidewidth$\m@th#1\cdot$\hidewidth
	}%
}
\providerobustcmd*{\bigcupdot}{%
	\mathop{%
		\mathpalette\bigop@dot\bigcup
	}%
}
\newrobustcmd*{\bigop@dot}[2]{%
	\setbox0=\hbox{$\m@th#1#2$}%
	\vbox{%
		\lineskiplimit=\maxdimen
		\lineskip=-0.7\dimexpr\ht0+\dp0\relax
		\ialign{%
			\hfil##\hfil\cr
			$\m@th\cdot$\cr
			\box0\cr
		}%
	}%
}
\definecolor{bananayellow}{rgb}{1.0, 0.88, 0.21}
\definecolor{citrine}{rgb}{0.89, 0.82, 0.04}
\definecolor{goldenyellow}{rgb}{1.0, 0.87, 0.0}
\definecolor{jonquil}{rgb}{0.98, 0.85, 0.37}
\definecolor{mikadoyellow}{rgb}{1.0, 0.77, 0.05}
\definecolor{mustard}{rgb}{1.0, 0.86, 0.35}
\definecolor{oldgold}{rgb}{0.81, 0.71, 0.23}
\declaretheoremstyle[
qed=\raisebox{1pt}{\textcolor{cardinalred}{\openbox}},
headfont = \color{cardinalred},
headformat = \NAME{} \textcolor{gray}{\NUMBER}:\NOTE,
]{theoremstyles}
\declaretheorem[
style=theoremstyles,
name=\textsc{Theorem \textcolor{gray}{\S}},
numberwithin=section,
]{thm}
\declaretheoremstyle[
qed=\raisebox{1pt}{\textcolor{cardinalred}{\openbox}},
headfont = \color{cardinalred},
headformat = \NAME{} \textcolor{gray}{\NUMBER}:\NOTE,
]{Corollarystyles}
\declaretheorem[
style=Corollarystyles,
name=\textsc{Corollary \textcolor{gray}{\S}},
numberlike=thm,
]{co}
\declaretheoremstyle[
qed=\raisebox{1pt}{\textcolor{cardinalred}{\openbox}},
headfont = \color{cardinalred},
headformat = \NAME{} \textcolor{gray}{\NUMBER}:\NOTE,
]{Propostyles}
\declaretheorem[
style=Propostyles,
name=\textsc{Proposition \textcolor{gray}{\S}},
numberlike=thm,
]{pr}
\declaretheoremstyle[
qed=\raisebox{1pt}{\textcolor{cardinalred}{\openbox}},
headfont = \color{cardinalred},
headformat = \NAME{} \textcolor{gray}{\NUMBER}:\NOTE,
]{Lemmastyles}
\declaretheorem[
style=Lemmastyles,
name=\textsc{Lemma \textcolor{gray}{\S}},
numberlike=thm,
]{lem}
\declaretheoremstyle[
qed=\raisebox{1pt}{\textcolor{Blue}{\openbox}},
headfont = \protect\color{Blue},
headformat = \NAME{} \textcolor{gray}{\NUMBER}:\NOTE,
]{definitions}
\declaretheorem[
style=definitions,
name=\textsc{Definition \textcolor{gray}{\S}},
numberlike=thm,
]{definition}
\declaretheoremstyle[
qed=\raisebox{1pt}{\textcolor{OliveGreen}{\openbox}},
headfont = \color{OliveGreen},
headformat = \NAME{} \textcolor{gray}{\NUMBER}:\NOTE,
]{examples}
\declaretheoremstyle[
qed=\raisebox{1pt}{\textcolor{oldgold}{\openbox}},
headfont = \color{oldgold},
headformat = \NAME{} \textcolor{gray}{\NUMBER}:\NOTE,
]{notations}
\declaretheorem[
style=notations,
name=\textsc{Notation \textcolor{gray}{\S}},
numberlike=thm,
]{notation}
\declaretheoremstyle[
qed=\raisebox{1pt}{\textcolor{Brown}{\openbox}},
headfont = \color{Brown},
headformat = \NAME{} \textcolor{gray}{\NUMBER}:\NOTE,
]{remarks}
\declaretheorem[
style=remarks,
name=\textsc{Remark \textcolor{gray}{\S}},
numberlike=thm,
]{rem}
\declaretheoremstyle[
numbered=no,
notebraces={}{},
qed=\raisebox{1pt}{\textcolor{gray}{\openbox}},
headfont = \itshape\color{gray},
]{proofs}
\declaretheorem[
style=proofs,
name=Proof of,
]{proof}
\newtheorem{ass}{Assumption}[section]
\newcommand{\Hilbert}[1]{$(\mathbb{H},\langle\,\cdot\,|\,\cdot\,\rangle_{\mathbb{H}})$}
\definecolor{gray75}{gray}{0.75}
\newcommand{\hsp}{\hspace{10pt}}
\titleformat{\chapter}[hang]{\bfseries\scshape\huge\color{cardinalred}}{\text{Chapter \textcolor{gray}{\S\thechapter}}\hsp\textcolor{gray75}{|}\hsp}{0pt}{\bfseries\scshape\huge\color{cardinalred}}
\titleformat{\chapter}[hang]{\bfseries\scshape\huge\color{cardinalred}}{\text{Chapter \textcolor{gray}{\S\thechapter}}}{6pt}{\scshape\huge\color{cardinalred}}
\titleformat{\section}[block]{\Large\scshape\thesection\centering\scshape}{}{1em}{}
\titleformat{\subsection}[block]{\large\scshape\centering\scshape}{}{1em}{}
\DeclareFontFamily{OMX}{lmex}{}
\DeclareFontShape{OMX}{lmex}{m}{n}{<->lmex10}{}
\newcommand{\pRz}{\mathbb R_{>0}}
\newcommand{\LebpRz}{\lambda_{>0}}
\newcommand{\BorelpRz}{\mathscr{B}_{>0}}
\newcommand{\BorelIR}{\mathscr{B}}
\newcommand{\norm}[1]{\left\|#1\right\|}
\newcommand{\M}{\operatorname M}
\newcommand{\MelU}{\M_{U}}
\newcommand{\MelUrez}{\M_{U}^\dagger}
\newcommand{\MelUhat}{\widehat{\M}_{U}}
\newcommand{\MelUhatrez}{\widehat{\M}_{U}^\dagger}
\newcommand{\sigmaYhat}{\widehat{\sigma}_{Y}^2}
\newcommand{\MelX}{\M_{X}}
\newcommand{\MelXcheck}{\widecheck{\M}_{X}}
\newcommand{\MelXhat}{\widehat{\M}_{X}}
\newcommand{\MelY}{\M_{Y}}
\newcommand{\MelYhat}{\widehat{\M}_{Y}}
\newcommand{\MelZhat}{\widehat{\M}_{Z}}
\newcommand{\SXhat}{ \widehat{S}^{X}}
\newcommand{\SX}{ S^{X}}
\newcommand{\const}{\mathfrak c}
\newcommand{\Const}{\mathfrak C}
\newcommand{\IB}{\mathbb B}
\newcommand{\IC}{\mathbb C}
\newcommand{\IE}{\mathbb E}
\newcommand{\IL}{\mathbb L}
\newcommand{\IN}{\mathbb N}
\newcommand{\IP}{\mathbb P}
\newcommand{\IQ}{\mathbb Q}
\newcommand{\IR}{\mathbb R}
\newcommand{\IS}{\mathbb S}
\newcommand{\IV}{\mathbb V}
\newcommand{\IW}{\mathbb W}
\newcommand{\rmh}{\mathrm{h}}
\newcommand{\rmw}{\mathrm{w}}
\newcommand{\rmwbar}{\bar{\mathrm{w}}}
\newcommand{\rmwrez}{\rmw^\dagger}
\newcommand{\rmv}{\mathrm{v}}
\newcommand{\rmvU}{\mathrm{v}_U}
\newcommand{\rmvhat}{\widehat{\mathrm{v}}}
\newcommand{\rmd}{\mathrm{d}}
\newcommand{\rmt}{\mathrm{t}}
\newcommand{\rmx}{\mathrm{x}}
\newcommand{\frako}{\mathfrak{o}}
\newcommand{\frakM}{\mathfrak{M}}
\newcommand{\khat}{\hat{k}}
\newcommand{\Rz}{\mathbb R}
\newcommand{\Nz}{\mathbb N}
\newcommand{\Pz}{\mathbb P}
\newcommand{\1}{\mathds 1}
\newcommand{\E}{\mathbb E}
\newcommand{\Var}{\mathbb V\mathrm{ar}}
\newcommand{\ocon}{\mathbin{\mathpalette\make@circled *}}
\newcommand{\make@circled}[2]{%
	\ooalign{$\m@th#1\smallbigcirc{#1}$\cr\hidewidth$\m@th#1#2$\hidewidth\cr}%
}
\newcommand{\smallbigcirc}[1]{%
	\vcenter{\hbox{\scalebox{0.77778}{$\m@th#1\bigcirc$}}}%
}
\newcommand{\nset}[1]{{\left\llbracket #1\right\rrbracket }}
\DeclareMathOperator*{\argmin}{arg\,min}
\newcommand{\pen}{\operatorname{pen}}
\DeclareFontFamily{U}{mathx}{}
\DeclareFontShape{U}{mathx}{m}{n}{<-> mathx10}{}
\DeclareSymbolFont{mathx}{U}{mathx}{m}{n}
\DeclareMathAccent{\widehat}{0}{mathx}{"70}
\DeclareMathAccent{\widecheck}{0}{mathx}{"71}
\newcommand{\cK}[1][]{\mathcal{K}_{#1}}
\newcommand{\ko}[1][\circ]{k_{#1}}
\newcommand{\kn}[1][n]{\ko[{n}]}
\newcommand{\evM}{\mathfrak{M}}
\newcommand{\evO}{\mho}
\newcommand{\evA}{\mathfrak{A}}
\newcommand{\MelXtilde}{\widetilde{\M}_{X}}
\newcommand{\skalar}[1]{\big\langle#1\big\rangle}
\numberwithin{equation}{section}
\begin{document}
	\setcounter{chapter}{1}
	\counterwithout{section}{chapter}
	\thispagestyle{plain}
	\begin{center}
		\huge \textbf{Multiplicative deconvolution under unknown error distribution}
		
		\vspace{0.2cm}
		\large\textsc{Sergio Brenner Miguel}\footnote{M$\Lambda$THEM$\Lambda$TIKON, Im Neuenheimer Feld 205, D-69120 Heidelberg;  \href{mailto:brennermiguel@math.uni-heidelberg.de}{brennermiguel@math.uni-heidelberg.de}}
		\hspace{2cm}
		\large\textsc{Jan Johannes}\footnote{M$\Lambda$THEM$\Lambda$TIKON, Im Neuenheimer Feld 205, D-69120 Heidelberg;  \href{mailto:johannes@math.uni-heidelberg.de}{johannes@math.uni-heidelberg.de}}\\
			\vspace{0.2cm}
		\large\textsc{Maximilian Siebel}\footnote{M$\Lambda$THEM$\Lambda$TIKON, Im Neuenheimer Feld 205, D-69120 Heidelberg; \href{mailto:siebel@math.uni-heidelberg.de}{siebel@math.uni-heidelberg.de}}\\
		\end{center}
		\vspace{0.9cm}
		\normalsize{We consider a multiplicative deconvolution problem, in which the density $f$ or the survival function $\SX$ of a strictly positive random variable $X$ is estimated nonparametrically based on an i.i.d. sample from a noisy observation $Y = X\cdot U$ of $X$. The multiplicative measurement error $U$ is supposed to be independent of $X$. The objective of this work is to construct a fully data-driven estimation procedure when the error density $f^U$ is unknown. We assume that in addition to the i.i.d. sample from $Y$, we have at our disposal an additional i.i.d. sample drawn independently from the error distribution. The proposed estimation procedure combines the estimation of the Mellin transformation of the density $f$ and a regularisation of the inverse of the Mellin transform by a spectral cut-off. The  derived risk bounds and oracle-type inequalities cover both - the estimation of the density $f$ as well as the survival function $\SX$. The main issue addressed in this work is the data-driven choice of the cut-off parameter using a model selection approach. We discuss conditions under which the fully data-driven estimator can attain the oracle-risk up to a constant without any previous knowledge of the error distribution. We compute convergences rates under classical smoothness assumptions. We illustrate the estimation strategy by a simulation study with different choices of distributions.}

	\section{Introduction}\label{sec: 1}
In this work let $X$ and $U$ be strictly positive and independent random variables both admitting unknown densities $f=f^X$ and $f^U$ accordingly. We propose a data-driven estimation procedure for the density $f$ as well as the survival function $\SX$ of $X$ based on an independent and identically distributed (i.i.d.) sample of size $n$ from a multiplicative measurement model $Y = X\cdot U$ and an additional sample of size $m$ drawn independently from the unknown error density $f^U$.
In this situation $Y$ admits also a density, given by 
	\begin{equation}\label{eq:multiplicative_convolution}
			f^{Y}(y):=[f\ocon f^U](y):=\int_{\pRz}f(x)f^U(y/x)x^{-1}\mathrm{d}x.
		\end{equation}
Estimating $f$ from i.i.d. observations following the law of $f^Y$ is a statistical inverse problem called multiplicative deconvolution. Multiplicative censoring is introduced and studied in \cite{Vardi1989} and \cite{VardiZhang1992}. It corresponds to the particular multiplicative deconvolution problem with multiplicative error $U$ uniformly distributed on $[0, 1]$. \cite{EsKlaassenOudshoorn2000} motivate and explain multiplicative censoring in survival analysis. \cite{VardiZhang1992} and \cite{AsgharianWolfson2005} consider the estimation of the cumulative distribution function of $X$. Treating the model as an inverse problem \cite{AndersenHansen2001} study series expansion methods. The density estimation in a multiplicative censoring model is considered in \cite{BrunelComteGenon-Catalot2016} using a kernel estimator and a convolution power kernel estimator. \cite{ComteDion2016} analyse a projection density estimator with respect to the Laguerre basis assuming a uniform error distribution on an interval $[1-\alpha, 1+\alpha]$ for $\alpha \in (0, 1)$. A beta-distributed error $U$ is studied in \cite{BelomestnyComteGenon-Catalot2016}. The multiplicative measurement error model covers all those three variations of multiplicative censoring. It was considered by \cite{BelomestnyGoldenshluger2020} studying the point-wise density estimation and by \cite{Brenner-MiguelComteJohannes2023} casting point-wise estimation as the estimation of the value of a known linear functional. The global estimation of the density under multiplicative measurement errors is considered in \cite{Brenner-MiguelComteJohannes2021} using the Mellin transform and a spectral cut-off regularisation of its inverse to define an estimator for the unknown density $f$. In those three papers the key to the analysis of multiplicative deconvolution is the multiplication theorem, which for a density $f^Y = f \ocon f^U$ and their Mellin transformations $\M[f^Y]$, $\M[f]$ and $\M[f^U]$ (defined below) states $\M[f^Y ] = \M[f]\M[f^U]$. It is used in \cite{BelomestnyGoldenshluger2020} and \cite{Brenner-MiguelComteJohannes2021} to construct respectively a kernel density estimator and a spectral cut-off estimator of the density $f$, while the later serves for a plug-in estimator in \cite{Brenner-MiguelComteJohannes2023}.
\cite{Brenner-Miguel2022} studies the global density estimation under multiplicative measurement errors for multivariate random variables while the global estimation of the survival function can be found in \cite{Brenner-MiguelPhandoidaen2022}. For local and global multiplicative deconvolution \cite{BelomestnyGoldenshluger2020} and \cite{Brenner-MiguelComteJohannes2021} both comment on the naive approach to apply standard additive deconvolution methods to the $\log$-transformed data. Essentially, the additive deconvolution theorem for the $\log$-transformed data equals a special case of the multiplicative convolution theorem.\\ 
We now turn to multiplicative deconvolution with unknown error density, which is inspired by similar ideas for
additive deconvolution problems (see for instance \cite{Neumann1997} and \cite{Johannes2009}). Following the estimation strategy in \cite{Brenner-MiguelComteJohannes2021} and \cite{Brenner-MiguelPhandoidaen2022}, and borrowing ideas from the inverse problems community (see for instance \cite{EnglHankeNeubauer1996}), we define spectral cut-off estimators $\hat{f}_k$ and $\SXhat_k$ of $f$ and $\SX$, respectively, by replacing the unknown Mellin transformations of $f^Y$ and $f^U$ by empirical counterparts based on the two samples from $U$ and $Y$ and additional thresholding. The accuracy of the estimators $\hat{f}_k$ and $\SXhat_k$ are measured in terms of the global risk with respect to a weighted $\IL^2$-norm on the positive real line $\pRz$. We observe that both global risks can be embedded into a more general risk analysis, which we then study in detail. The proposed estimation strategy depends on a further tuning parameter $k$, which has to be chosen by the user. In case of a known error density \cite{Brenner-MiguelComteJohannes2021} and \cite{Brenner-MiguelPhandoidaen2022} propose
a data-driven choice of the tuning parameter $k$ by model selection
exploiting the theory of \cite{BarronBirgeMassart1999}, where we refer
to \cite{Massart2007} for an extensive overview. Our aim is to establish a fully data-driven estimation procedure for the density $f$ and the survival function $\SX$ when the error density is unknown and derive oracle-type upper risk bounds as well as convergences rates. A similar approach has been considered for additive deconvolution problems for instance in \cite{ComteLacour2011} and \cite{Johannes2013}. Regarding the two samples sizes $n$ and $m$, by comparing the oracle-type risk bounds in the cases of known and unknown error densities, we characterise the influence of the estimation of the error density on the quality of the estimation. Interestingly, in case of additive convolution on the circle and the real line \cite{Johannes2013} and \cite{ComteLacour2011} derive
respectively oracle-type inequalities with similar structure.\\
The paper is organised as follows. In \cref{sec: 2} we start with recalling the definition of the Mellin transform as well stating certain properties. Secondly, supposing the error density to be known we introduce the spectral cut-off estimators of $f$ and $\SX$ as respectively proposed by \cite{Brenner-MiguelComteJohannes2021} and \cite{Brenner-MiguelPhandoidaen2022}. We study their global risk with respect to weighted $\IL^2$-norms and discuss how to generalise this risk analysis. Afterwards, we investigate in oracle-type inequalities and minimax-optimal convergences rates under regularity assumptions on the Mellin transformations of $f$ and $f^U$, respectively. In \cref{sec: 3,sec: 4} we dismiss the knowledge of the error density. In \cref{sec: 3} we derive a general risk bound and oracle-type inequality. We devote \cref{sec: 4} in constructing the fully data-driven estimation procedure and eventually show a upper risk bound. The general theory applies in particular to the estimation of $f$ and $\SX$. In \cref{sec: 5} we illustrate the proposals of \cref{sec: 3} and \cref{sec: 4} by a simulation study with different choices of distributions.

	\section{Model assumptions and background}\label{sec: 2}
In the following paragraphs we introduce first the multiplicative
measurement model with known error distribution and we recall the
definition of the Mellin transform and its empirical counter part
as well as their properties. Secondly, still assuming the  error
distribution is known in advance we briefly present a data-driven density estimation
strategy due to \cite{Brenner-MiguelComteJohannes2021} which we extend
in the sequel to cover simultaneously the estimation of the density
 $f$ as well as the survival function $\SX$ of $X$. 
\subsection{The multiplicative measurement model}\label{subsec: 2.1}
Let $(\pRz,\BorelpRz,\LebpRz)$ denote the Lebesgue-measure space of
all positive real numbers $\pRz$ equipped with the restriction
$\LebpRz$ of the Lebesgue measure on the Borel $\sigma$-field
$\BorelpRz$. Assume that $X$ and $U$ are independent random variables, both taking values in $\pRz$ and
admitting both a (Lebesgue) density $f:=f^X$ and $f^U$, respectively.  The multiplicative measurement model
describes observations 
following the law of $Y := X\cdot U$. In this situation, $Y$ admits
a  density  $f^Y=f\ocon f^U$ given as multiplicative convolution of
$f$ and $f^U$, which can be computed explicitly as in \eqref{eq:multiplicative_convolution}.
For a detailed discussion of multiplicative convolution and its
properties as operator between function spaces we refer to
\cite{Brenner-Miguel2023}. However, assuming
the error distribution is known, we have in the sequel access to an
independent and identically
distributed (i.i.d.) sample $\{Y_i\}_{i\in\nset{n}}$ drawn from $f^Y$, where we have used
the shorthand notation $\nset{a}:=[1,a]\cap\IN$ for any
$a\in\Rz_{\geq1}$.
\subsection{The (empirical) Mellin transform}
In the subsequent, we keep the following
objects and notations in mind: Given a density function $\rmv$ defined
on $\pRz$, i.e. a Borel-measurable nonnegative function
$\rmv:\pRz\to\IR_{\geq0}$, let $\rmv\LebpRz$ denote the
$\sigma$-finite measure on $(\pRz,\BorelpRz)$ which is $\LebpRz$
absolutely continuous and admits the Radon-Nikodym derivative $\rmv$
with respect to $\LebpRz$. For $p\in[1,\infty]$ let
$\IL^p_+(\rmv):=\IL^p(\pRz,\BorelpRz,\rmv\LebpRz)$ denote the usual
complex Banach-space of all (equivalence classes of)
$\IL^p$-integrable complex-valued function with respect to the measure
$\rmv\lambda_{>0}$. Similarly, we set
$\IL^p(\rmv):=\IL^p(\IR,\BorelIR,\rmv\lambda)$ for a density function
$\rmv$ defined on $\Rz$. If $\rmv= \1$, i.e. $\rmv$ is mapping
constantly to one, we write shortly $\IL^p_+:=\IL^p_+(\mathds{1})$ and
$\IL^p:=\IL^p(\mathds{1})$.  At this point we shall remark, that we have
used and will further use the terminology \textit{density}, whenever we are meaning a
probability density function (such as $f$) and on the other hand side
\textit{density function}, whenever we are meaning a Radon-Nikodym
derivative (such as $\rmv$). For $c\in\IR$ we introduce the density
function $\rmx^c:\pRz\to\pRz$ given by $x\mapsto\rmx^c(x):=x^c$. The Mellin
transform $\M_c$ is the unique linear and bounded operator between the
Hilbert spaces $\IL^2_+(\rmx^{2c-1})$ and $\IL^2$, which for each
$h\in\IL^2_+(\rmx^{2c-1})\cap\IL^1_+(\rmx^{c-1})$ and $t\in\IR$ satisfies
\begin{equation}\label{def:mellin}
  \M_c[h](t):=\int_{\pRz}x^{c-1+\iota 2\pi t}h(x)\rmd\LebpRz(x),
\end{equation}
where $\iota\in\IC$ denotes the imaginary unit. Similar to the
Fourier transform the Mellin
transform $\M_c$  is unitary, i.e.
$\skalar{h,g}_{\IL^2_+(\rmx^{2c-1})} =
\skalar{\M_c[h],\M_c[g]}_{\IL^2}$ for any
$h,g\in\IL^2(\rmx^{2c-1})$. In particular, it satisfies a \textit{Plancherel type identity}
\begin{equation}\label{eq:Plancherel}
  \norm{h}_{\IL^2_+(\rmx^{2c-1})}^2 = \norm{\M_c[h]}_{\IL^2}^2,\quad\forall h\in\IL^2_+(\rmx^{2c-1}).
\end{equation}
Its adjoined (and inverse) $\M_c^{\star}:\IL^2\to\IL^2_+(\rmx^{2c-1})$ fulfils for each
$g\in\IL^2\cap\IL^1$ and $x\in\pRz$
\begin{equation}\label{def:mellin:inv}
  \M_c^{\star}[g](x):=\int_{\Rz}x^{-c-\iota 2\pi t}g(t)\rmd\lambda(t).
\end{equation}
For a detailed discussion of the Mellin
transform and its properties we refer again to \cite{Brenner-Miguel2023}. In analogy to the additive
convolution theorem of the Fourier transform (see
\cite{Meister2009} for definitions and properties), there is a
multiplicative convolution theorem for the Mellin
transform.  Namely, for any $h_1\in\IL^2_+(\rmx^{2c-1})$ and $h_2\in\IL^1_+(\rmx^{c-1})\cap
\IL^2_+(\rmx^{2c-1})$ the multiplicative convolution theorem states 
\begin{equation}\label{eq:multconvothm}
  \M_c[h_1\ocon h_2] = \M_c[h_1] \M_c[h_2].
\end{equation}
Here and subsequently, we assume that $f\in\IL^2_+(\rmx^{2c-1})$ and
$f^U\in\IL^1_+(\rmx^{c-1})\cap \IL^2_+(\rmx^{2c-1})$, hence $f^Y\in\IL^2_+(\rmx^{2c-1})$, for some
$c\in\IR$, which is from now on fixed. Note that $\IL^2$-integrability
is a common assumption in additive deconvolution, which might be here seen
as the particular case $c=1/2$. However, allowing for different values
$c\in\IR$ makes the dependence on
$c\in\IR$ of the assumptions stipulated below  visible.  In order to simplify the presentation in the
following sections, we introduce some frequently used shorthand
notations.
\begin{notation}[Empirical Mellin transformation]\label{not: MellinT} The Mellin transformations of the densities $f^Y$, $f$ and $f^U$, respectively, we denote briefly by 
		\begin{equation*}
			\MelY:= \M_c[f^Y]\text{, }	\MelX:= \M_c[f]\text{ and } \MelU:=\M_c[f^U].
		\end{equation*}
		The reciprocal of a function $\rmw:\Rz\to\IC$ is well-defined on the set $\{\rmw\neq 0\}:=\{t\in\IR:\,\rmw(t)\neq 0\}$ and for each $t\in\IR$ we write for short
		\begin{equation*}
			\rmwrez(t):=\frac{1}{\rmw(t)}\1_{\{\rmw\neq 0\}}(t)
		\end{equation*}
		where $\1_{\{\rmw\neq 0\}}$ denotes an indicator function. Precisely, for any set $A\subseteq\IR$ we write $\1_A$ for the indicator function, mapping $x\in\IR$ to $\1_A(x):=1$, if $x\in A$, and  $\1_A(x):=0$, otherwise. Since the Mellin transformation $\MelY$ of $f^Y$ is unknown we follow \cite{Brenner-MiguelComteJohannes2021} and introduce an empirical counterpart based on the observations $\{Y_i\}_{i\in\nset{n}}$. The \textit{empirical Mellin transformation} $\MelYhat$  is given by
		\begin{equation*}
			\MelYhat(t):=\frac{1}{n}\sum_{i\in\nset n}Y_{i}^{c-1+\iota2\pi t}
		\end{equation*}
		for any $t\in\IR$. Observe that $\MelYhat$ is a point-wise unbiased estimator of $\MelY$, i.e. for all $t\in\IR$, we have $	\IE[\MelYhat(t)] = \MelY(t)$, where $\IE$ denotes the expectation under the distribution of the observations $\{Y_i\}_{i\in\nset{n}}$. Let us further introduce  the point-wise scaled variance of $\MelYhat$ defined for each $t\in\IR$ by
		\begin{align*}
			\IV_Y^2(t):=n\IE[|\MelYhat(t)-\MelY(t)|^2] = \IE[|Y_1^{c-1+\iota2\pi t}-\MelY(t)|^2] 
			\leq 1+\IE[Y_1^{2(c-1)}]=:\sigma_{Y}^2,
		\end{align*}
		whenever $\IE[Y_1^{2(c-1)}]$ is finite.
		The estimation procedure we are discussing next is based on estimating the unknown Mellin transformation $\MelX$ in a first place. Having the multiplicative convolution theorem \eqref{eq:multconvothm} in mind, a estimator of $\MelX$ is given by 
		\begin{equation}
			\MelXtilde(t):=\MelYhat(t) \MelUrez(t)
		\end{equation}
		for each $t\in\IR$. To shorten notation we write $\MelXtilde^k:=\MelXtilde\1_{[-k,k]}$ for each $k\in\pRz$. Finally, in what follow  we denote by $\Const\in\IR_{>0}$ universal numerical constants and by $\Const(\cdot)\in\IR_{>0}$ constants depending only on the argument. In both cases, the values of the constants may change from line to line.
\end{notation}
\subsection{Nonparametric density estimation - known error distribution}\label{subsec:2.3}
In case of a known error
distribution of $U$  we follow  \cite{Brenner-MiguelComteJohannes2021} in defining a spectral cut-off
density estimator $\tilde{f}_k$ of $f$ for each $x\in\pRz$ and tuning
parameter $k\in\pRz$ by
\begin{equation}\label{eq: classical_estimator}
  \tilde{f}_{k}(x):=\int_{[-k,k]}x^{-c-\iota2\pi t}\MelYhat(t)\MelUrez(t)\mathrm{d}\lambda(t) = \M_c^{-1}\left[\MelXtilde^k\right](x).
\end{equation}
assuming that
$\1_{[-k,k]}\MelUrez\in\IL^2$ for each
$k\in\pRz$. The last condition ensures evidently that $\tilde{f}_k$ is well-defined.
If we  require that $\MelU\ne0$ a.s. then we have $f=\M_c^{-1}\left[\MelX
\right]=\M_c^{-1}\left[\MelY\MelUrez \right]$ due to the Mellin
inversion formula and  the  multiplicative convolution theorem.
Intuitively speaking, from the last identity we obtain the  estimator
in \eqref{eq: classical_estimator} by truncating and replacing the unknown Mellin transformation
$\MelY$ of $f^Y$ by its empirical counter part $\MelYhat$.
In \cite{Brenner-MiguelComteJohannes2021} the global $\IL^2_+(\rmx^{2c-1})$-risk of
$\tilde{f}_{k}$ is analysed, which 
for each $k\in\pRz$ is written as
\begin{equation}\label{thm: RiskBound_Known}
  \IE\left[\norm{\tilde{f}_k-f}^2_{\IL^2_+(\rmx^{2c-1})}\right] = \IE\left[\norm{\MelXtilde^k-\MelX}_{\IL^2}^2\right]
  = \norm{\mathds{1}_{[-k,k]^C}\MelX}_{\IL^2}^2 + \frac{1}{n}\norm{\mathds{1}_{[-k,k]}\mathbb{V}_Y\MelUrez}_{\IL^2}^2,
\end{equation}
where the first equality follows directly from Plancherel's identity
(see \eqref{eq:Plancherel}) and the second equality is due to the
usual  squared bias and variance decomposition.
\subsection{Nonparametric survival function estimation -  known error distribution}\label{subsec:2.4}
In this paragraph, following \cite{Brenner-MiguelPhandoidaen2022} we recall  an estimator of the survival function
$\SX$ of $X$ based on the observation $\{Y_i\}_{i\in\nset{n}}$, when the error density $f^U$ is known. 
The survival function of $X$ satisfies
\begin{equation*}
  \SX:
  \begin{cases}
    \pRz&\to[0,1]\\
    x&\mapsto\Pz[X\geq x].
  \end{cases}
\end{equation*}
As $X$ admits the density $f$, we evidently for all $x\in\IR$ have
\begin{equation*}
  \SX(x)= \int_{\pRz}\1_{[x,\infty)}(y)f(y)\mathrm{d}\lambda_{>0}(y).
\end{equation*}
 According to \cite{Brenner-MiguelPhandoidaen2022} we have
$f\in\IL_+^1(\rmx^{c-1})\cap\IL_+^2(\rmx^{2c-1})$ if and only if
$\SX\in\IL_+^1(\rmx^{c-2})\cap \IL_+^2(\rmx^{2c-3})$. Further,
elementary computations show for each $t\in\IR$ and $c\in\Rz_{>1}$ that
\begin{equation*}
  \M_{c-1}[\SX](t) = (c-1+\iota2\pi t)^{-1}\MelX(t),
\end{equation*} 
  Exploiting the last identity  \cite{Brenner-MiguelPhandoidaen2022} propose a spectral cut-off estimator
of $\SX$ given for all
$k,x\in\pRz$ by
\begin{align*}
  \tilde{S}^X_{k}(x)&:=\int_{[-k,k]}x^{-c+1-\iota2\pi t}\frac{\MelYhat(t)\MelUrez(t)}{c-1+\iota2\pi t}\mathrm{d}\lambda(t)
  = \M_{c-1}^{-1}\left[(c-1+\iota2\pi\cdot)^{-1}\MelXtilde^k(\cdot)\right](x).
\end{align*}
and study its $\IL^2_+(\rmx^{2c-3})$-risk, which reads as
\begin{align}\nonumber
  \IE\left[\norm{\tilde{S}^X_k-S}^2_{\IL^2_+(\rmx^{2c-3})}\right] &= \IE\left[\norm{(c-1+\iota2\pi\cdot)^{-1}(\MelXtilde^k-\MelX)}_{\IL^2}^2\right]\\
  \nonumber
  &= \IE\left[\norm{\MelXtilde^k-\MelX}_{\IL^2(\mathrm{t}_c)}^2\right] \\\label{eq:riskSX2}
  &= \norm{\mathds{1}_{[-k,k]^C}\MelX}_{\IL^2(\mathrm{t}_c)}^2 + \frac{1}{n}\norm{\mathds{1}_{[-k,k]}\mathbb{V}_Y\MelUrez}_{\IL^2(\mathrm{t}_c)}^2,		
\end{align}
where the Plancherel's identity \eqref{eq:Plancherel} yields the first
equality, the second makes use of the density function
$\rmt_c(t):=\left((c-1)^2+4\pi^2t^2\right)^{-1}$ for $t\in\IR$
and the third states the  squared bias and variance decomposition.
\subsection{Oracle type inequality and minimax optimal rates}\label{subsec:2.5}
In the previous paragraphs, we have seen that the global
$\IL^2_+(\rmx^{2c-1})$-risk for $\tilde{f}_k$ in \eqref{thm: RiskBound_Known} and the global
$\IL^2_+(\rmx^{2c-3})$-risk for $\tilde{S}^X_k$ in \eqref{eq:riskSX2} exactly equals
\begin{equation}\label{eq:riskdeco}
  \IE\left[\norm{\MelXtilde^k-\MelX}_{\IL^2(\mathrm{v})}^2\right] 
  = \norm{\mathds{1}_{[-k,k]^C}\MelX}_{\IL^2(\mathrm{v})}^2 + \frac{1}{n}\norm{\mathds{1}_{[-k,k]}\mathbb{V}_Y\MelUrez}_{\IL^2(\mathrm{v})}^2,
\end{equation}
with $\rmv = \1$ and $\rmv = \rmt_c$ accordingly. Therefore we study
in the sequel the risk in \eqref{eq:riskdeco} with an arbitrary
density function $\rmv$ more in detail and consider oracle-type
inequalities as well as minimax-optimal convergences rates. Let us
summarise the assumptions we have so far imposed.
\renewcommand{\theass}{A.\Roman{ass}}
\begin{ass}\label{ass:A1}\leavevmode\\	
  Let $X$ and $U$ be independent and $\pRz$-valued random variables
  with density $f$ and $f^U$, respectively. Consider
  i.i.d. observations $\{Y_i\}_{i\in\nset{n}}$ following the law of
  $Y = X\cdot U$, which admits the density $f^Y = f\ocon
  f^U$. In addition, let
  \begin{enumerate}
  \item [i)] $f^U\in\IL_+^1(\rmx^{c-1})\cap\IL_+^2(\rmx^{2c-1})$, $f\in\IL_+^2(\rmx^{2c-1})$ and $f^Y\in\IL_+^1(\rmx^{2(c-1)})$, set $\sigma_{Y}^2:=1+\IE[Y^{2(c-1)}] = 1+\norm{f^Y}_{\IL^1_+(\rmx^{2(c-1)})}$ .
  \item [ii)] $\rmv:\IR\to\IR_{\geq 0}$ be a (measurable) density function and set $\rmvU:=|\MelUrez|^2\rmv$.
  \item [iii)] $\MelX\in\IL^2(\rmv)$, $\MelU\ne0$ a.s. and $\1_{[-k,k]}\in\IL^2(\rmvU)$ (hence $\1_{[-k,k]}\in\IL^2(\rmv)$) for each $k\in\pRz$.\qed
  \end{enumerate}
\end{ass}
Having the  elementary risk decomposition as stated in
\eqref{eq:riskdeco} on hand, we aim to select a tuning parameter
$k_\frako\in\pRz$ minimising the risk, namely
\begin{align}\nonumber
  \IE\left[\norm{\MelXtilde^{k_\frako}-\MelX}_{\IL^2(\mathrm{v})}^2\right]& =\inf_{k\in\pRz}\left\{ \norm{\mathds{1}_{[-k,k]^C}\MelX}_{\IL^2(\mathrm{v})}^2 + \frac{1}{n}\norm{\mathds{1}_{[-k,k]}\mathbb{V}_Y\MelUrez}_{\IL^2(\mathrm{v})}^2\right\}\\\label{eq:oracle2}
 & \leq\sigma_Y^2\cdot  \inf_{k\in\pRz}\left\{ \norm{\mathds{1}_{[-k,k]^C}\MelX}_{\IL^2(\mathrm{v})}^2 + \frac{1}{n}\norm{\mathds{1}_{[-k,k]}\MelUrez}_{\IL^2(\rmv)}^2\right\},
\end{align}
which exists since the first term in \eqref{eq:riskdeco} decreases
while the second one increases for an increasing $k$. However, such a
$k_\frako$ is unattainable as it depends directly on the unknown
Mellin transformation $\MelX$. Therefore $k_\frako$ is called an
oracle and hence the upper bound in \eqref{eq:oracle2} is also called
oracle-type inequality. In the paragraph below, we revisit a
data-driven selection method for $k$ to avoid this lack of
information. For this choice we also obtain a oracle-type inequality
similar to \eqref{eq:oracle2} up to a different constant. We next
briefly recall minimax-optimal convergences rates for the global
$\IL^2(\rmv)$-risk as for instance derived in \cite{Brenner-MiguelComteJohannes2021}
and \cite{Brenner-MiguelPhandoidaen2022}. To do so, we impose regularity
assumptions on the Mellin transformation $\MelX$ and $\MelU$. Firstly,
let us recall the definition of the Mellin-Sobolev space with
regularity parameter $s\in\pRz$, given by
\begin{equation*}
  \IW^s:=\left\{h\in\IL^2(\rmx^{2c-1}):\,\norm{\rmt_2^{\frac{s}{2}}\M_c[h]}_{\IL^2}= \norm{\M_c [h]}_{\IL^2(\mathrm{t}_2^s)}<\infty\right\}.
\end{equation*}
Further, for some radius $L\in\pRz$, we consider the associated Mellin-Sobolev ellipsoid, given by
\begin{equation*}
  \IW^s(L):=\left\{h\in\IW^s:\,\norm{\M_c [h]}_{\IL^2(\mathrm{t}_2^s)}\leq L\right\}.
\end{equation*}
In the following brief discussion we assume an unknown density
$f\in\IW^s(L)$ of $X$, where the regularity $s$ is specified
below. Regarding the Mellin transformation $\MelU$, we assume in the
sequel its \textit{ordinary smoothness} \eqref{eq:os} or \textit{super
  smoothness} \eqref{eq:es}, i.e. for some decay parameter
$\gamma\in\pRz$ and all $t\in\IR$,
\begin{equation}\label{eq:os}
  \exists \const_l,\const_u\in\pRz:\, \const_l\cdot|t|^{-\gamma}\leq |\MelU(t)|\leq\const_u\cdot|t|^{-\gamma}\tag{$\mathfrak{o.s.}$}
\end{equation}
and
\begin{equation}\label{eq:es}
  \exists \const_l,\const_u\in\pRz:\, \exp(-\const_l\cdot|t|^{2\gamma})\leq |\MelU(t)|\leq \exp(-\const_u\cdot|t|^{2\gamma}),\tag{$\mathfrak{s.s.}$}
\end{equation}
respectively. In order to discuss the convergences rates for the
$\IL^2(\rmv)$-risk under these regularity assumptions, we restrict
ourselves to the choice $\rmv:=\rmt_c^a$ for $a\in\IR$, observing that
$a = 0$ corresponds to the global risk for estimating the density $f$
and $a=1$ corresponds to estimating the survival function $\SX$ of $X$
as discussed before. In \cref{tab:table1} we state an uniform upper
bound of the bias term
$\norm{\mathds{1}_{[-k,k]^C}\MelX}_{\IL^2(\mathrm{v})}^2$ over the
Mellin-Sobolev ellipsoid $\IW^s(L)$ and an upper bound for the variance term
$ \frac{1}{n}\norm{\mathds{1}_{[-k,k]}\MelUrez}_{\IL^2(\rmv)}^2$
provided the Mellin transformation $\MelU$ to be either ordinary
smooth \eqref{eq:os} or super smooth \eqref{eq:es}. Moreover in
\cref{tab:table1} is depict for both specifications the order of the
optimal choice $k_\frako$ and the upper risk bound \eqref{eq:oracle2}
as $n\to\infty$, which follow immediately from elementary
computations.
	\begin{table}[H]
	\centering
	\begin{tabular}{||@{\hspace*{3pt}}c@{\hspace*{3pt}}c@{\hspace*{3pt}}c@{\hspace*{3pt}}c@{\hspace*{3pt}}c@{\hspace*{3pt}}c@{\hspace*{3pt}}||}
	      \toprule
	      &$\MelU$ & Bias & Variance & $k_\frako$ & Minimax risk \\ [0.5ex] 
	      \midrule\midrule
	      $a\in(-1/2-\gamma,s)$& $\mathfrak{o.s.}$ & $k^{-2(s-a)}$ & $n^{-1}k^{2(a+\gamma)+1}$ & $n^{\frac{1}{2\gamma+2s+1}}$ &  $n^{-\frac{2(s-a)}{2\gamma+2s+1}}$\\ 
	      \midrule
	      $s>a$&$\mathfrak{s.s.}$  & $k^{-2(s-a)}$ & $n^{-1}k^{(1-2(\gamma-a))_+}\exp(k^{2\gamma})$ & $(\log n)^{\frac{1}{2\gamma}}$ & $(\log n)^{-\frac{s-a}{\gamma}}$\\ [1ex] 
	      \bottomrule
	\end{tabular}
	\caption{}\label{tab:table1}
	\end{table}
 In \cite{Brenner-MiguelComteJohannes2021} and
\cite{Brenner-MiguelPhandoidaen2022} it has been shown that the rates in \cref{tab:table1}
are actually minimax-optimal.
\subsection{Data driven estimation}\label{subsec:2.6}
\cite{Brenner-MiguelComteJohannes2021} and \cite{Brenner-MiguelPhandoidaen2022} propose
a data-driven choice of the tuning parameter $k$ by model selection
exploiting the theory of \cite{BarronBirgeMassart1999}, where we refer
to \cite{Massart2007} for an extensive overview. More precisely they
consider
\begin{equation}\label{eq:ktilde}
  \tilde{k}:\in\argmin_{k\in\mathcal{K}}\left\{-\norm{\mathds{1}_{[-k,k]} \MelYhat\MelUrez}^2_{\IL^2(\rmv)}+\pen_k\right\} = \argmin_{k\in\mathcal{K}}\left\{-\norm{\MelXtilde^k}^2_{\IL^2(\rmv)}+\pen_k\right\} ,
\end{equation}
where $\{\pen_k\}_{k\in\pRz}$ is a family of penalties and $\mathcal{K}\subset\IN$ is an appropriate finite subset specified later. The aim is to analyse the $\IL^2(\rmv)$-risk, namely
\begin{equation*}
  \IE\left[\norm{\MelXtilde^{\tilde{k}}-\MelX}_{\IL^2(\mathrm{v})}^2\right].
\end{equation*}
In the sequel we introduce a family of penalties and set of models
which differ from the original works in preparation of the procedure
presented in \cref{sec: 3,sec: 4} below.  For the upper risk bound we impose slightly stronger assumptions
than \cref{ass:A1} that we state next.
\begin{ass}\label{ass:A2}\leavevmode\\	
  In addition to \cref{ass:A1} let
  $f^Y\in\IL_+^1(\rmx^{8(c-1)})\cap\IL_+^\infty(\rmx^{2c-1})$ such
  that there exists $\eta_Y\in\IR_{\geq1}$ satisfying 
  $\eta_Y\geq\max\left\{\norm{f^Y}_{\IL_+^\infty(\rmx^{2c-1})},\norm{f^Y}_{\IL_+^1(\rmx^{8(c-1)})}\right\}$.
  We
  set $a_Y:=6\norm{f^Y}_{\IL_+^\infty(\rmx^{2c-1})}/\sigma_Y^2$ and 
  $k_Y:=1\vee3a_Y^2$.\qed
\end{ass}
Text-book computations as they can be found for instance in
\cite{Comte_2017_book} are leading to the following standard key
argument, which for any $\ko\in\mathcal{K}$ states
\begin{multline}\label{eq:keyargument01}
  \norm{\MelXtilde^{\tilde{k}}-\MelX}_{\IL^2(\mathrm{v})}^2\leq  3\norm{\mathds{1}_{[-\ko,\ko]^C}\MelX}_{\IL^2(\mathrm{v})}^2
  + 4\pen_{\ko} \\+8 \max_{k\in\mathcal{K}}\left\{\left(\norm{\mathds{1}_{[-k,k]} (\MelYhat-\MelY)\MelUrez}^2_{\IL^2(\rmv)}-\frac{\pen_k}{4}\right)_+\right\},
\end{multline}
using the shorthand notation $a_+:=\max\{0,a\}$ for any
$a\in\IR$. Recalling $\rmvU:=|\MelUrez|^2\rmv$, the last summand in
\eqref{eq:keyargument01} reads as
\begin{equation*}
  \max_{k\in\mathcal{K}}\left\{\left(\norm{\mathds{1}_{[-k,k]} (\MelYhat-\MelY)\MelUrez}^2_{\IL^2(\rmv)}-\frac{\pen_k}{4}\right)_+\right\} =  \max_{k\in\mathcal{K}}\left\{\left(\norm{\mathds{1}_{[-k,k]} (\MelYhat-\MelY)}^2_{\IL^2(\rmvU)}-\frac{\pen_k}{4}\right)_+\right\}.
\end{equation*}
Taking the expectation in the last display, the next proposition
allows to control its value. In its proof we make use of
\cref{re:conc:1} in \cref{app: sec2}, which is based on a Talagrand
inequality, stated in \cref{lem: Talagrand}. Here and subsequently,
for an arbitrary density function $\rmw:\IR\to\IR_{\geq 0}$ satisfying
$\1_{[-k,k]}\in\IL^\infty(\rmw)$ for each $k\in\pRz$, we denote by
\begin{equation}\label{re:conc:def}
  \Delta_k^{\rmw} := \norm{\1_{[-k,k]}}_{\IL^\infty(\rmw)}\text{
    and }\delta_k^{\rmw}
  :=
  \frac{\log(\Delta_k^{\rmw}\lor(k+2))}{\log(k+2)}\in\IR_{\geq1},\;\forall
  k\in\IR_{\geq1}.
\end{equation}
\begin{pr}[Concentration inequality]\label{re:conc} 
  Under \cref{ass:A2} define $ \Delta_k^{\rmvU}$ and
  $\delta_k^{\rmvU}$ as in \eqref{re:conc:def} with
  $\rmvU:=|\MelUrez|^2\rmv$. For $n\in\IN$ consider
  \begin{equation*}
    k_n:=\max\{k\in\nset{n^2}:k\Delta_{k}^{\rmvU} \leq
    n^2\Delta_1^{\rmvU}\}.
  \end{equation*}
  We then have
  \begin{equation*}
    \IE\left[\max_{k\in\nset{k_n}}\left\{\left(\norm{\mathds{1}_{[-k,k]}
            (\MelYhat-\MelY)}^2_{\IL^2(\rmvU)}-12\sigma_{Y}^2\Delta_k^{\rmvU}\delta_k^{\rmvU} k
          n^{-1}\right)_+\right\}\right] \leq \Const\cdot\eta_Y(1\vee k_Y\Delta_{k_Y}^{\rmvU})\cdot n^{-1}. 
  \end{equation*}
\end{pr}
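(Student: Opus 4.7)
The plan is to represent the quantity inside the maximum as a squared supremum of a centred empirical process, apply the Talagrand inequality (\cref{lem: Talagrand}, used in the squared form of \cref{re:conc:1}) at each fixed $k$, and finally control the maximum over $k\in\nset{k_n}$ by summing the resulting tails. By duality one has
$$\norm{\mathds{1}_{[-k,k]}(\MelYhat-\MelY)}_{\IL^2(\rmvU)}=\sup_{g\in\mathcal{B}_k}|\nu_n(\phi_g)|,\qquad \phi_g(y):=\int_{-k}^k y^{c-1+\iota 2\pi t}\,g(t)\,\rmvU(t)\,\rmd t,$$
where $\mathcal{B}_k$ is a countable dense subset of the $\IL^2(\rmvU)$-unit ball of functions supported on $[-k,k]$ and $\nu_n(\phi):=\tfrac{1}{n}\sum_{i=1}^n\{\phi(Y_i)-\IE\phi(Y_1)\}$ denotes the centred empirical process.

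The next step is to identify the three parameters $(H,v,b)$ required by Talagrand's inequality. Using $|y^{c-1+\iota 2\pi t}|=y^{c-1}$, Cauchy--Schwarz in $\IL^2(\rmvU)$ and the weight-supremum that defines $\Delta_k^{\rmvU}$, the individual variance satisfies $\sup_{g\in\mathcal{B}_k}\Var(\phi_g(Y_1))\leq\sigma_Y^2\Delta_k^{\rmvU}$, while the expected squared supremum is directly bounded by the variance term in the bias--variance decomposition \eqref{eq:riskdeco}, i.e.\ $\sigma_Y^2\cdot 2k\Delta_k^{\rmvU}/n$. The delicate ingredient is the uniform envelope $b=\sup_{g\in\mathcal{B}_k}\norm{\phi_g}_{\IL^\infty}$; here one invokes the strengthened $\IL^\infty$- and $\IL^1$-type control on $f^Y$ from \cref{ass:A2} (encoded in $\eta_Y$), yielding $b\leq \Const\cdot\sqrt{\eta_Y\, k\Delta_k^{\rmvU}}$.

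Inserting these three quantities into Talagrand's inequality and integrating out the tail produces, for each $k$, an estimate of the schematic form
$$\IE\Bigl[\bigl(\norm{\mathds{1}_{[-k,k]}(\MelYhat-\MelY)}^2_{\IL^2(\rmvU)}-12\sigma_{Y}^2\Delta_k^{\rmvU}\delta_k^{\rmvU}k/n\bigr)_+\Bigr]\leq\Const\cdot\eta_Y\Bigl(\frac{\Delta_k^{\rmvU}}{n}(k+2)^{-\delta_k^{\rmvU}}+\frac{1}{n^2}\Bigr).$$
The definition $\delta_k^{\rmvU}=\log(\Delta_k^{\rmvU}\vee(k+2))/\log(k+2)$ is calibrated precisely so that the sub-Gaussian factor of Talagrand becomes the polynomially decaying $(k+2)^{-\delta_k^{\rmvU}}$, hence summable in $k$. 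Bounding the maximum by the sum and summing over $k\in\nset{k_n}$ then gives a $\Const\cdot\eta_Y/n$ contribution, while a separate and easier treatment of the small range $k\leq k_Y$ (where the threshold lies below the variance and only the first-moment bound is required) produces the additive $k_Y\Delta_{k_Y}^{\rmvU}$ factor appearing in the statement.

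The main obstacle is the fine balancing of the threshold $12\sigma_Y^2\Delta_k^{\rmvU}\delta_k^{\rmvU}k/n$ against the three Talagrand parameters: one must verify that over the range $k\in\nset{k_n}$ the Bernstein-type tail $\exp(-c\sqrt{n/(k\Delta_k^{\rmvU})})$ is dominated by the sub-Gaussian tail, which is exactly the purpose of the constraint $k_n\Delta_{k_n}^{\rmvU}\leq n^2\Delta_1^{\rmvU}$ built into the definition of $k_n$. The strengthened moment hypothesis $\norm{f^Y}_{\IL_+^1(\rmx^{8(c-1)})}\leq\eta_Y$ in \cref{ass:A2} is what enables the control of the envelope $b$ and thus the application of Talagrand's inequality in the form needed.
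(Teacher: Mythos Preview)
Your overall strategy (duality, Talagrand at each $k$, then summation) matches the paper's, but there is a genuine gap in the envelope argument. You claim that the $\IL^\infty$- and $\IL^1$-type control on $f^Y$ from \cref{ass:A2} yields a uniform bound $b\leq\Const\sqrt{\eta_Y k\Delta_k^{\rmvU}}$ on $\sup_{g\in\mathcal B_k}\|\phi_g\|_{\IL^\infty}$. This is not correct: by Cauchy--Schwarz one only gets $|\phi_g(y)|\leq y^{c-1}\sqrt{2k\Delta_k^{\rmvU}}$, and the factor $y^{c-1}$ is unbounded on $\pRz$ (unless $c=1$). The quantity $\eta_Y$ controls weighted integrals of $f^Y$, not the essential supremum of $Y^{c-1}$, so no finite deterministic envelope $\psi$ is available and Talagrand's inequality in the form of \cref{lem: Talagrand} cannot be applied directly.

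The paper resolves this by a truncation that you have omitted: one writes $Y^{c-1+\iota 2\pi t}=Y^{c-1+\iota 2\pi t}\mathds{1}_{\{Y^{c-1}<d\}}+Y^{c-1+\iota 2\pi t}\mathds{1}_{\{Y^{c-1}\geq d\}}$ with $d=n^{1/3}$, giving $\MelYhat-\MelY=(\MelYhat^{\mathrm b}-\MelY^{\mathrm b})+(\MelYhat^{\mathrm u}-\MelY^{\mathrm u})$. Talagrand is applied only to the bounded part, with envelope $\psi^2=2d^2k\Delta_k^{\rmvU}$; the resulting sub-exponential Talagrand term then decays like $\exp(-n^{1/6}/100)$, and the constraint $k_n\Delta_{k_n}^{\rmvU}\leq n^2\Delta_1^{\rmvU}$ is used to absorb the polynomial prefactor of this term (not, as you suggest, to make the Bernstein tail dominated by the sub-Gaussian one). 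The unbounded remainder is handled by the crude moment bound $n\IE|\MelYhat^{\mathrm u}(t)-\MelY^{\mathrm u}(t)|^2\leq d^{-6}\IE[Y_1^{8(c-1)}]$, and it is here that the hypothesis $\norm{f^Y}_{\IL_+^1(\rmx^{8(c-1)})}\leq\eta_Y$ enters. Finally, the $\IL^\infty(\rmx^{2c-1})$ bound on $f^Y$ is used not for the envelope but for the Talagrand variance parameter $\tau$, via a Parseval argument giving $\tau\leq\norm{f^Y}_{\IL^\infty(\rmx^{2c-1})}\Delta_k^{\rmvU}\wedge\sigma_Y^2\norm{\mathds{1}_{[-k,k]}}_{\IL^2(\rmvU)}^2$; this refined $\tau$ is what produces the summable factor $\exp(-k/a_Y)$ and explains the constants $a_Y,k_Y$ in the statement.
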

\begin{proof}[\cref{re:conc}]From the
  decomposition \eqref{proof:conc:decomp} in \cref{rem:conc} (with
  $\rmw=\rmvU$) we obtain
  \begin{align*}
    \IE&\left[\max_{k\in\nset{k_n}}\left\{\left(\norm{\mathds{1}_{[-k,k]}
            (\MelYhat-\MelY)}^2_{\IL^2(\rmvU)}-12\sigma_{Y}^2\Delta_k^{\rmvU}\delta_k^{\rmvU} k
          n^{-1}\right)_+\right\}\right]\\
          &\leq
    2 \IE\left[\norm{\mathds{1}_{[-k_n,k_n]}
        (\MelYhat^{\mathrm{u}}-\MelY^{\mathrm{u}})}^2_{\IL^2(\rmvU)}\right]
    2 \IE\left[\max_{k\in\nset{k_n}}\left\{\left(\norm{\mathds{1}_{[-k,k]}
            (\MelYhat^{\mathrm{b}}-\MelY^{\mathrm{b}})}^2_{\IL^2(\rmvU)}-6\sigma_{Y}^2\Delta_k^{\rmvU}\delta_k^{\rmvU} k
          n^{-1}\right)_+\right\}\right],
  \end{align*}
  where we bound the two right hand side terms separately with the
  help of \cref{re:conc:1}. Therewith we obtain
  \begin{align}\nonumber
    &\IE\left[\max_{k\in\nset{k_n}}\left\{\left(\norm{\mathds{1}_{[-k,k]}
            (\MelYhat-\MelY)}^2_{\IL^2(\rmvU)}-12\sigma_{Y}^2\Delta_k^{\rmvU}\delta_k^{\rmvU} k
          n^{-1}\right)_+\right\}\right]\\\nonumber&\leq n^{-1} \Const \bigg[\E\big[Y_1^{8(c-1)}\big] n^{-2}k_n\Delta_{k_n}^{\rmvU}+n^{-4}k_n^2\Delta_{k_n}^{\rmvU}  \\\label{eq:proof:CI1}&\phantom{=}+(1\vee\norm{f^Y}_{\IL^\infty(\rmx^{2c-1})}^2)(1+\norm{\mathds{1}_{[-k_Y,k_Y]}}_{\IL^2(\rmvU)}^2)\sum_{k\in\nset{k_n}}\frac{1}{a_Y}\exp\big(\frac{-k}{a_Y}\big)\bigg].
  \end{align}
  Exploiting that $\sum_{k\in\Nz}\exp(-k/a_Y)\leq a_Y$ and the
  definition of $k_n\in\nset{n^2}$ we have
  \begin{multline*}
    \IE\left[\max_{k\in\nset{k_n}}\left\{\left(\norm{\mathds{1}_{[-k,k]}
            (\MelYhat-\MelY)}^2_{\IL^2(\rmvU)}-12\sigma_{Y}^2\Delta_k^{\rmvU}\delta_k^{\rmvU} k
          n^{-1}\right)_+\right\}\right]\\\leq n^{-1} \Const
    \bigg[(1\vee\E\big[Y_1^{8(c-1)}\big])\Delta_1^{\rmvU} +(1\vee\norm{f^Y}_{\IL^\infty(\rmx^{2c-1})}^2)(1+\norm{\mathds{1}_{[-k_Y,k_Y]}}_{\IL^2(\rmvU)}^2)\bigg],
  \end{multline*}
  which together with
  $\E\big[Y_1^{8(c-1)}\big]=\norm{f^Y}_{\IL^1(\rmx^{8(c-1)})}$,
  $\norm{\mathds{1}_{[-k_Y,k_Y]}}_{\IL^2(\rmvU)}^2\leq
  2k_Y\Delta_{k_Y}^{\rmvU}$,
  $\Delta_1^{\rmvU}\leq k_Y\Delta_{k_Y}^{\rmvU}$ and the definition of
  $\eta_Y$ shows the claim and completes the proof.
\end{proof}
Having \cref{re:conc} at hand we set $\mathcal{K}:=\nset{k_n}\subset \IN$
and define for each $k\in\nset{k_n}$
\begin{equation*}
  \pen_k^{\rmvU} := 48\Delta_k^{\rmvU}\delta_k^{\rmvU} k n^{-1}.
\end{equation*}
Evidently, choosing $\pen_k := \sigma_{Y}^2 \pen_k^{\rmvU}$
\cref{re:conc} allows to bound the expectation of the last summand in
\eqref{eq:keyargument01}. Unfortunately,
$\sigma_{Y}^2=1+ \IE[Y^{2(c-1)}]$ is unknown to us. However, we have
at our disposal an unbiased estimator given by
\begin{equation*}
  \sigmaYhat :=1+ n^{-1}\sum_{i\in\nset{n}}Y_i^{2(c-1)}.
\end{equation*}
Hence, replacing subsequently the unknown $\sigma_{Y}^2$ by its empirical counterpart $\sigmaYhat$ we consider the data driven choice
\begin{equation}\label{eq:khat}
  \hat{k}:\in\argmin_{k\in\nset{k_n}}\left\{-\norm{\MelXtilde^k}^2_{\IL^2(\rmv)}+2\sigmaYhat\pen_k^{\rmvU}\right\}.
\end{equation}
Elementary computations in \cite{Brenner-MiguelComteJohannes2021} show a slightly
changed version of the key argument \eqref{eq:keyargument01}, which
reads for each $\ko\in\nset{k_n}$ as
\begin{multline}\label{eq:keyargument1}
  \norm{\MelXtilde^{\khat}-\MelX}_{\IL^2(\mathrm{v})}^2\leq  3\norm{\mathds{1}_{[-\ko,\ko]^C}\MelX}_{\IL^2(\mathrm{v})}^2
  + 2\sigma_{Y}^2\pen_{\ko}^{\rmvU} + 4\sigmaYhat\pen_{\ko}^{\rmvU}
  + 2(\sigma_{Y}^2-2\sigmaYhat)_+ \pen_{k_n}^{\rmvU} \\+8 \max_{k\in\nset{k_n}}\left\{\left(\norm{\mathds{1}_{[-k,k]} (\MelYhat-\MelY)}^2_{\IL^2(\rmvU)}-\frac{\sigma_{Y}^2\pen_k^{\rmvU}}{4}\right)_+\right\}.
\end{multline}
By applying the expectation on both sides on the inequality of the last display, and taking 
\begin{equation}\label{eq:ko}
  \ko:\in\argmin_{k\in\nset{k_n}}\left\{ \norm{\mathds{1}_{[-k,k]^C}\MelX}_{\IL^2(\mathrm{v})}^2 + \Delta_k^{\rmvU}\delta_k^{\rmvU} k n^{-1}\right\},
\end{equation}
we immediately obtain
\begin{multline}\label{eq:oracleadaptive2}
  \IE\left[\norm{\MelXtilde^{\khat}-\MelX}_{\IL^2(\mathrm{v})}^2\right] 
  \leq 6\sigma_Y^2 \cdot\min_{k\in\nset{k_n}}\left\{ \norm{\mathds{1}_{[-k,k]^C}\MelX}_{\IL^2(\mathrm{v})}^2 + \Delta_k^{\rmvU}\delta_k^{\rmvU} k n^{-1}\right\} + \Const\cdot\eta_Y(1\vee k_Y\Delta_{k_Y}^{\rmvU})\cdot n^{-1},
\end{multline}
due to $\IE[\sigmaYhat] = \sigma_{Y}^2$, \cref{re:conc} as well as
\begin{equation*}
  \IE\left[\left(\frac{\sigma_{Y}^2}{2}-\sigmaYhat\right)_+\pen_{k_n}^{\rmvU}\right] \leq \Const \eta_Y   \Delta_1^{\rmvU}n^{-1}.
\end{equation*}
At this point we want to stress out again that specifying $\rmv=\1$
and $\rmv = \rmt_c$, respectively, we obtain directly from
\eqref{eq:oracleadaptive2} upper bounds for
\begin{equation*}
  \IE\left[\norm{\tilde{f}_{\khat}-f}_{\IL_+^2(\rmx^{2c-1})}^2\right]\text{ and }		\IE\left[\norm{\tilde{S}^X_{\khat}-\SX}_{\IL_+^2(\rmx^{3c-2})}^2\right]
\end{equation*}
due to Plancherel's identity \eqref{eq:Plancherel}. Similar to
\cref{subsec:2.5} we assume in the following brief discussion a density
$f\in\IW^s(L)$ of $X$, where the regularity $s\in\pRz$ is specified
below. Regarding the Mellin transformation $\MelU$, we subsequently
assume again its \textit{ordinary smoothness} \eqref{eq:os} or
\textit{super smoothness} \eqref{eq:es}. In order to discuss the
convergences rates for the $\IL^2(\rmv)$-risk under these regularity
assumptions, we restrict ourselves to the choice $\rmv:=\rmt_c^a$ for
$a\in\IR$ again, observing that $a = 0$ corresponds to the global risk
for estimating the density $f$ and $a=1$ corresponds to estimating the
survival function $\SX$ of $X$ as discussed before. In
\cref{tab:table3} we state again the uniform upper bound of the bias
term $\norm{\mathds{1}_{[-k,k]^C}\MelX}_{\IL^2(\mathrm{v})}^2$ over
the Mellin-Sobolev ellipsoid $\IW^s(L)$ and in an addition an upper
bound for the variance term
$\Delta_k^{\rmvU}\delta_k^{\rmvU} k n^{-1}$ provided the Mellin
transformation $\MelU$ to be either ordinary smooth \eqref{eq:os} or
super smooth \eqref{eq:es}. Moreover in \cref{tab:table2} below is depict
for both specifications the order of the optimal choice $\ko$ as in
\eqref{eq:ko} and the upper risk bound \eqref{eq:oracleadaptive2} as
$n\to\infty$, which follow immediately from elementary computations.
  \begin{table}[H]
  	\centering
    \begin{tabular}{||@{\hspace*{3pt}}c@{\hspace*{3pt}}c@{\hspace*{3pt}}c@{\hspace*{3pt}}c@{\hspace*{3pt}}c@{\hspace*{3pt}}c@{\hspace*{3pt}}||}
      \toprule
      &$\MelU$ & Bias & Variance &
                                   $\ko$ & Data-driven risk \\ [0.5ex] 
           \midrule\midrule
      $a\in(-\frac{1}{2}-\gamma,s)$& $\mathfrak{o.s.}$ & $k^{-2(s-a)}$ & $n^{-1}k^{2(a+\gamma)+1}$ & $n^{\frac{1}{2\gamma+2s+1}}$ &  $n^{-\frac{2(s-a)}{2\gamma+2s+1}}$\\ 
                 \midrule
      $s>a$&$\mathfrak{s.s.}$  & $k^{-2(s-a)}$ & $n^{-1}k^{1+2\gamma}\exp(k^{2\gamma})$ & $(\log n)^{\frac{1}{2\gamma}}$ & $(\log n)^{-\frac{s-a}{\gamma}}$\\ [1ex] 
      \bottomrule
    \end{tabular}
    \caption{}\label{tab:table2}
  \end{table}
Comparing \cref{tab:table1,tab:table2}  we observe that the
order of the risk bound coincide and hence the data driven estimation
is minimax-optimal.
	
\section{Estimation strategy for unknown error density}\label{sec: 3}

After recapitulating an estimation strategy for the multiplicative deconvolution problem
assuming the error density is known in advance, we dismiss this assumption in this section. Inspired by similar ideas for
additive deconvolution problems (see for instance \cite{Neumann1997} and \cite{Johannes2009}), we study estimation in the multiplicative
deconvolution problem with unknown error density. In addition to i.i.d. observations $\{Y_i\}_{i\in\nset{n}}$ following the law of the
multiplicative measurement model $Y = X\cdot U$, we have access to additional measurements $\{U_j\}_{j\in\nset{m}}$, $m\in\IN$, which
are i.i.d. drawn following the law of $U$ independently of the first
sample $\{Y_i\}_{i\in\nset{n}}$. We estimate the Mellin
transformation $\MelU$ by its empirical
counterpart given for each $t\in\IR$ by
\begin{equation*}
  \MelUhat(t):= m^{-1}\sum_{j\in\nset{m}}U_j^{c-1+\iota2\pi t}.
\end{equation*}
Similarly to $\MelYhat$, we observe that
$\IE[\MelUhat(t)] = \MelU(t)$ for all $t\in\IR$, i.e. $\MelUhat$ is an
unbiased estimator of $\MelU$. Here and subsequently, the expectation $\IE$ is considered with respect to the joint distribution of $\{Y_i\}_{i\in\nset{n}}$ and $\{U_j\}_{j\in\nset{m}}$. As in \cref{sec: 2} we intend to divide by
$\MelUhat$ whenever it is well defined, or in equal multiply with
$\MelUhatrez= \frac{1}{\MelUhat}\1_{\{\MelUhat\neq 0\}}$ (see
\cref{not: MellinT}). Note that the indicator set
$\{\MelUhat\neq 0\}:=\{t\in\IR:\,\MelUhat(t)\neq 0\}$ is not
deterministic anymore since it depends on the random variables
$\{U_j\}_{j\in\nset{m}}$. However, we note that $|\MelUhatrez|$ is generally
unbounded on the event $\{\MelUhat\neq 0\}$ which would lead to an
unstable estimation. Hence, we truncate $\MelUhat$ sufficiently far
away from zero. Recalling that $n$ and $m$ denote the samples sizes of
$\{Y_i\}_{i\in\nset{n}}$ and $\{U_j\}_{j\in\nset{m}}$, we thus define
in accordance with \cite{Neumann1997} the random indicator set
\begin{equation*}
  \frakM:=\left\{(m\land n)|\MelUhat|^2\geq 1\right\}:=\left\{t\in\IR:\,(m\land n)|\MelUhat(t)|^2\geq 1\right\},
\end{equation*} 
which only depends on the additional measurements $\{U_j\}_{j\in\nset{m}}$. Similarly as in \cref{sec: 2} we have now all ingredients to define an estimator of the unknown Mellin transformation $\MelX$. Indeed, having the convolution theorem \eqref{eq:multconvothm} in mind (stating $\MelY = \MelX\cdot\MelU$) we propose as an estimator of $\MelX$ 
\begin{equation*}
  \MelXhat := \MelYhat \MelUhatrez\1_\frakM. 
\end{equation*}
To simplify the presentation later, we further write
$\MelXhat^k:=\MelXhat\1_{[-k,k]}$ and $\MelX^k:=\MelX\1_{[-k,k]}$ for
any $k\in\pRz$.
\subsection{Nonparametric density estimation - unknown error density}
Motivated by the estimation strategy in \cref{subsec:2.3} we propose in this paragraph a thresholded spectral cut-off density
estimator for $f$ in the multiplicative deconvolution problem with
unknown error distribution.
\begin{definition}[Thresholded spectral cut-off estimator]\label{def: estimator}
  Assuming an unknown density $f\in\IL^2_+(\rmx^{2c-1})$ of
  $X$  for some $c\in\IR$ define the thresholded
  spectral density estimator $\hat{f}_k$  for each $k,x\in\pRz$ by
  \begin{equation*}
    \hat{f}_{k}(x):=\int_{[-k,k]}x^{-c-\iota2\pi t}\MelYhat(t)\MelUhatrez(t)\1_\mathfrak{M}(t)\mathrm{d}\lambda(t) = \M_c^{-1}\left[\MelXhat^k\right](x).
  \end{equation*}
\end{definition}
The next lemma provides a
 representation of the global $\IL^2_+(\rmx^{2c-1})$-risk of the
 estimator $\hat{f}_k$ similar to the decomposition \eqref{thm: RiskBound_Known}
 of the global $\IL^2_+(\rmx^{2c-1})$-risk of  $\tilde{f}_k$ in  \cref{subsec:2.3}.
 \pagebreak
\begin{lem}[Risk representation]\label{lem: 1}
  For $k\in\pRz$ consider the density estimator $\hat{f}_k$ given in
  \cref{def: estimator} and recall the definition of $\IV_Y^2$ (see
  \cref{not: MellinT}).  We then have
  \begin{align*}
    \E\left[\norm{\hat{f}_k-f}^2_{\IL^2_+(\rmx^{2c-1})}\right] &= \IE\left[\norm{\MelXhat^k-\MelX}_{\IL^2}^2\right]\\
    &=\norm{\MelX\mathds{1}_{[-k,k]^C}}^2_{\IL^2} 
    +\frac{1}{n}\E\left[\norm{\MelUhat^\dagger\1_\mathfrak{M}\mathbb{V}_Y\mathds{1}_{[-k,k]}}^2_{\IL^2}\right]\\
   &\phantom{=}+\E\left[\norm{\MelX^k\1_{\mathfrak{M}^C}}_{\IL^2}^2\right] + \IE\left[\norm{\MelUhat^\dagger\1_\mathfrak{M}(\MelU-\MelUhat)\MelX^k}^2_{\IL^2}\right].
  \end{align*}
\end{lem}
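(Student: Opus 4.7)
The plan is to start from the Plancherel identity (see \eqref{eq:Plancherel}) applied to $\hat{f}_k-f=\M_c^{-1}[\MelXhat^k-\MelX]$, which is legitimate since by construction $\hat{f}_k\in\IL_+^2(\rmx^{2c-1})$ and $f\in\IL_+^2(\rmx^{2c-1})$ by \cref{ass:A1}. This immediately yields the first stated equality, so the remainder of the work is an $\IL^2$-decomposition of $\MelXhat^k-\MelX$.

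First I would split the error with respect to the cut-off $[-k,k]$, namely
\begin{equation*}
\MelXhat^k-\MelX=(\MelXhat^k-\MelX^k)-\mathds{1}_{[-k,k]^C}\MelX.
\end{equation*}
Since the two terms on the right have disjoint supports in $t$, orthogonality in $\IL^2$ gives
\begin{equation*}
\norm{\MelXhat^k-\MelX}_{\IL^2}^2=\norm{\MelXhat^k-\MelX^k}_{\IL^2}^2+\norm{\mathds{1}_{[-k,k]^C}\MelX}_{\IL^2}^2,
\end{equation*}
which already isolates the deterministic bias term. Next I would split the first summand according to the random set $\frakM$ on which the thresholding is active. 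On $\frakM^C$ we have $\MelXhat=0$ by definition, hence $\MelXhat^k-\MelX^k=-\mathds{1}_{\frakM^C}\MelX^k$ there, while on $\frakM$ the indicators $\mathds{1}_{\frakM}$ and $\mathds{1}_{\frakM^C}$ have disjoint supports, so one more Pythagoras step yields
\begin{equation*}
\norm{\MelXhat^k-\MelX^k}_{\IL^2}^2=\norm{\mathds{1}_{[-k,k]}\mathds{1}_\frakM(\MelYhat\MelUhatrez-\MelX)}_{\IL^2}^2+\norm{\mathds{1}_{\frakM^C}\MelX^k}_{\IL^2}^2,
\end{equation*}
accounting for the third summand in the claim.

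For the remaining term I would use that on $\frakM$ the inverse $\MelUhatrez=1/\MelUhat$ is well-defined, and exploit the multiplicative convolution theorem \eqref{eq:multconvothm}, which gives $\MelY=\MelX\MelU$. Adding and subtracting $\MelY$ one gets
\begin{equation*}
\mathds{1}_\frakM(\MelYhat\MelUhatrez-\MelX)=\mathds{1}_\frakM\MelUhatrez\bigl[(\MelYhat-\MelY)+\MelX(\MelU-\MelUhat)\bigr].
\end{equation*}
The key step is then to take expectations conditional on the sample $\{U_j\}_{j\in\nset{m}}$: under this conditioning $\MelUhat$, $\MelUhatrez$ and $\mathds{1}_\frakM$ are deterministic, while $\MelYhat$ is independent of the $U$-sample with $\IE[\MelYhat-\MelY]=0$ and $\IE[|\MelYhat(t)-\MelY(t)|^2]=n^{-1}\IV_Y^2(t)$. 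Consequently the cross term vanishes and, after Fubini, the squared $\IL^2$-norm becomes the sum of the two remaining contributions in the claim:
\begin{equation*}
\frac{1}{n}\IE\bigl[\norm{\MelUhatrez\mathds{1}_\frakM\IV_Y\mathds{1}_{[-k,k]}}_{\IL^2}^2\bigr]+\IE\bigl[\norm{\MelUhatrez\mathds{1}_\frakM(\MelU-\MelUhat)\MelX^k}_{\IL^2}^2\bigr].
\end{equation*}

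The only place that requires any care is the justification of the conditional-expectation step, where one has to be sure that $\MelX(\MelU-\MelUhat)$ is measurable with respect to the $U$-sample and that the Fubini-type interchange of $\IE$ and the $\IL^2$-integration in $t$ is valid; this is standard once one notes that the integrand is nonnegative. Everything else reduces to two applications of Pythagoras in $\IL^2$ and a bias–variance decomposition in the $Y$-sample conditional on the $U$-sample, so I do not anticipate additional technical obstacles.
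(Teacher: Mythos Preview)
Your proposal is correct and follows essentially the same approach as the paper: Plancherel, two orthogonal splits (first over $[-k,k]$, then over $\frakM$), and finally a bias--variance decomposition of $\MelYhat\MelUhatrez-\MelX$ exploiting $\MelY=\MelX\MelU$ together with the independence of the $Y$- and $U$-samples. The paper phrases the last step as ``exploiting the independence'' rather than via conditional expectation, but the argument is identical.
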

\begin{proof}[\cref{lem: 1}]			
  Recalling the definition of $\hat{f}_k$ as well as Plancherel's
  identity (see \cref{eq:Plancherel}), we have
  \begin{align*}
    \norm{\hat{f}_k-f}_{\IL^2_+(\rmx^{2c-1})}^2
    &= \norm{\MelXhat^k-\MelX}_{\IL^2}^2
      = \norm{\MelXhat^k-\MelX(\1_{[-k,k]}+\1_{[-k,k]^C})}_{\IL^2}^2\\
    &=\norm{\MelX\1_{[-k,k]}-\mathds{1}_{[-k,k]} \MelYhat\MelUhatrez\1_\mathfrak{M}}^2_{\IL^2}+\norm{\MelX\mathds{1}_{[-k,k]^C}}^2_{\IL^2}\\
    &=\norm{\MelUhat^\dagger\1_\frakM(\MelYhat-\MelUhat\MelX)\1_{[-k,k]}}^2_{\IL^2} 
   + \norm{\MelX\mathds{1}_{[-k,k]}\1_{\mathfrak{M}^C}}_{\IL^2}^2 +\norm{\MelX\mathds{1}_{[-k,k]^C}}^2_{\IL^2},
  \end{align*}
  where we used in the last step that 
  \begin{equation}
    \MelX\1_{[-k,k]}=	\MelX\1_{[-k,k]}(\1_\frakM+\1_{\frakM^C})= \MelUhat^\dagger\MelUhat\1_\mathfrak{M}\MelX\1_{[-k,k]} + \MelX\1_{[-k,k]}\1_{\frakM^C}.
  \end{equation}
  Studying only the first summand further studied we obtain for each $t\in\IR$
  \begin{equation*}
    n\E\left[|\MelYhat(t)-\MelY(t)|^2\right] = \IE\left[|Y_1^{c-1+\iota2\pi t}-\MelY(t)|^2\right] 
    =\mathbb{V}_Y^2(t).
  \end{equation*}
  By exploiting the independence of $\{Y_i\}_{i\in\nset{n}}$ and
  $\{U_j\}_{j\in\nset{m}}$, we finally have
  \begin{align*}
    \IE\left[\norm{\MelUhat^\dagger\1_\frakM(\MelYhat-\MelUhat\MelX)\1_{[-k,k]}}^2_{\IL^2} \right] &= \IE\left[\norm{\MelUhat^\dagger\1_\mathfrak{M}(\MelU-\MelUhat)\MelX^k}^2_{\IL^2}\right]
    +\frac{1}{n}\E\left[\norm{\MelUhat^\dagger\1_\mathfrak{M}\mathbb{V}_Y\mathds{1}_{[-k,k]}}^2_{\IL^2}\right],
  \end{align*}
  which shows the claim.
\end{proof}
At this point we want to stress out that the
$\IL^2_+(\rmx^{2c-1})$-risk representation of \cref{lem: 1} for
$\hat{f}_k$ has a very similar structure as the corresponding risk
representation of $\tilde{f}_k$ in \eqref{thm: RiskBound_Known} assuming the error
density $f^U$ to be known. Indeed, the first term remains the same -
it represents the bias term, which can be specified later, considering
different regularity assumptions on $f$.  Later, we see that the
second term actually represents the variance term. The two additional
summands, only depending on the additional measurements
$\{U_j\}_{j\in\nset{m}}$, occur in this particular situation, where
we estimate the unknown $\MelU$ as well.
\subsection{Nonparametric survival analysis - unknown error density}
Considering the survival analysis in \cref{subsec:2.4}, we propose now
an estimator for the survival function $\SX$ of $X$ under
multiplicative measurement errors with unknown error density $f^U$ and
additional measurements $\{U_j\}_{j\in\nset{m}}$. Indeed, we follow the definition of
$\tilde{S}_k$ and analogously as for the density estimator
$\tilde{f}_k$ we replace the unknown Mellin transformation $\MelU$ by
its (sufficiently truncated) counterpart, which leads to the
following definition.
\pagebreak
\begin{definition}[Thresholded spectral cut-off estimator of $\SX$]\label{def: estimatorSX}
  Assuming an unknown density $f\in\IL^2_+(\rmx^{2c-1})$ of
  $X$  for some $c\in\IR_{>1}$. The thresholded spectral cut-off estimator $\SXhat_k$ of the survival function $\SX$ of $X$ is defined for each $k,x\in\pRz$ by
  \begin{align*}
    \SXhat_k(x):&=\int_{[-k,k]}x^{-c+1-\iota2\pi t}\frac{\MelYhat(t)\MelUhatrez(t)}{c-1+\iota2\pi t}\1_\mathfrak{M}(t)\mathrm{d}\lambda(t)
     = \M_{c-1}^{-1}\left[(c-1+\iota2\pi\cdot)^{-1}\MelXhat^k\right](x).
  \end{align*}
\end{definition}
Similar to \eqref{eq:riskSX2} we are again interested in quantifying
the accuracy of $\SXhat_k$ in terms of its global
$\IL_+^2(\rmx^{2c-3})$-risk. The representation in the next corollary
follows line by line the proof of \cref{lem: 1} and is hence omitted.
\begin{co}[Risk Representation]\label{lem: 1SX}
  For $k\in\pRz$ consider the estimator $\hat{S}_k$ given in
  \cref{def: estimatorSX} and recall that $\rmt_c:\IR\to\pRz$ with
  $t\mapsto\rmt_c(t):=\left((c-1)^2+4\pi^2t^2\right)^{-1}$. We then
  have
  \begin{align*}
    \IE\left[\norm{\SXhat_k-\SX}^2_{\IL^2_+(\rmx^{2c-3})}\right] &= \IE\left[\norm{(c-1+\iota2\pi\cdot)^{-1}\left(\MelXhat^k-\MelX\right)}_{\IL^2}^2\right]= \IE\left[\norm{\MelXhat^k-\MelX}_{\IL^2(\rmt_c)}^2\right]\\
    &=\norm{\MelX\mathds{1}_{[-k,k]^C}}^2_{\IL^2(\rmt_c)} 
    +\frac{1}{n}\E\left[\norm{\MelUhat^\dagger\1_\mathfrak{M}\mathbb{V}_Y\mathds{1}_{[-k,k]}}^2_{\IL^2(\rmt_c)}\right]\\
    &\phantom{=}+\E\left[\norm{\MelX^k\1_{\mathfrak{M}^C}}_{\IL^2(\rmt_c)}^2\right] + \IE\left[\norm{\MelUhat^\dagger\1_\mathfrak{M}(\MelU-\MelUhat)\MelX^k}^2_{\IL^2(\rmt_c)}\right].
  \end{align*}
\end{co}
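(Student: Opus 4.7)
The plan is to reduce the claim to an application of Plancherel's identity for $\M_{c-1}$ followed by a bias-variance-type decomposition that mirrors the proof of \cref{lem: 1} verbatim, only with $\IL^2$ replaced by $\IL^2(\rmt_c)$.

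First, since $\M_{c-1}$ is unitary from $\IL^2_+(\rmx^{2c-3})$ to $\IL^2$, \eqref{eq:Plancherel} yields
\begin{equation*}
\IE\left[\norm{\SXhat_k-\SX}^2_{\IL^2_+(\rmx^{2c-3})}\right] = \IE\left[\norm{\M_{c-1}[\SXhat_k]-\M_{c-1}[\SX]}^2_{\IL^2}\right].
\end{equation*}
The identity $\M_{c-1}[\SX](t)=(c-1+\iota 2\pi t)^{-1}\MelX(t)$ recalled in \cref{subsec:2.4}, together with the defining formula of $\SXhat_k$ in \cref{def: estimatorSX}, gives $\M_{c-1}[\SXhat_k](t)=(c-1+\iota 2\pi t)^{-1}\MelXhat^k(t)$. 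Since $|(c-1+\iota 2\pi t)^{-1}|^2=\rmt_c(t)$, factoring out this weight shows the first line of the claimed identity, namely
\begin{equation*}
\IE\left[\norm{\SXhat_k-\SX}^2_{\IL^2_+(\rmx^{2c-3})}\right] = \IE\left[\norm{\MelXhat^k-\MelX}_{\IL^2(\rmt_c)}^2\right].
\end{equation*}

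At this point the argument is a transcription of the proof of \cref{lem: 1}. Splitting $\MelX=\MelX\1_{[-k,k]}+\MelX\1_{[-k,k]^C}$, the two summands are supported on disjoint sets and hence orthogonal in $\IL^2(\rmt_c)$. This produces the bias term $\norm{\MelX\1_{[-k,k]^C}}^2_{\IL^2(\rmt_c)}$ and an additional error supported on $[-k,k]$. On this support, the decomposition $\MelX\1_{[-k,k]}=\MelUhat^\dagger\MelUhat\1_\frakM\MelX\1_{[-k,k]}+\MelX\1_{[-k,k]}\1_{\frakM^C}$ together with the disjoint supports $\frakM$ and $\frakM^C$ gives an orthogonal split in $\IL^2(\rmt_c)$ and isolates the term $\norm{\MelX^k\1_{\frakM^C}}^2_{\IL^2(\rmt_c)}$. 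The remaining contribution equals $\norm{\MelUhat^\dagger\1_\frakM(\MelYhat-\MelUhat\MelX)\1_{[-k,k]}}^2_{\IL^2(\rmt_c)}$.

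For the final step, one conditions on the additional sample $\{U_j\}_{j\in\nset{m}}$ so that $\MelUhat^\dagger\1_\frakM\1_{[-k,k]}$ becomes deterministic. Using the independence of $\{Y_i\}_{i\in\nset{n}}$ and $\{U_j\}_{j\in\nset{m}}$ together with the identity
\begin{equation*}
n\IE\left[|\MelYhat(t)-\MelY(t)|^2\right]=\IV_Y^2(t)
\end{equation*}
from \cref{not: MellinT}, the cross term vanishes and the square splits into the variance contribution $\frac{1}{n}\IE[\norm{\MelUhat^\dagger\1_\frakM\IV_Y\1_{[-k,k]}}^2_{\IL^2(\rmt_c)}]$ and the bias-of-$\MelU$ contribution $\IE[\norm{\MelUhat^\dagger\1_\frakM(\MelU-\MelUhat)\MelX^k}^2_{\IL^2(\rmt_c)}]$. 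Summing the four pieces yields the asserted representation. No step here presents a genuine obstacle once the weight $\rmt_c$ has been pulled out in the first paragraph; the remaining bookkeeping is identical to the $\rmv=\1$ case of \cref{lem: 1}.
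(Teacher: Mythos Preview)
Your proposal is correct and follows exactly the approach the paper indicates: the paper states that the proof ``follows line by line the proof of \cref{lem: 1} and is hence omitted,'' and you have faithfully reproduced that argument with the weight $\rmt_c$ in place of $\1$, after first invoking Plancherel for $\M_{c-1}$ and the identity $\M_{c-1}[\SX](t)=(c-1+\iota2\pi t)^{-1}\MelX(t)$ to absorb the factor $|c-1+\iota2\pi t|^{-2}=\rmt_c(t)$ into the norm.
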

Analogously to estimating the density $f$, we obtain in \cref{lem:
  1SX} a risk representation with very similar structure as the
corresponding risk representation of $\tilde{S}_k$ in \eqref{eq:riskSX2} assuming the
error density $f^U$ to be known.  The first and second term represents
again the bias and variance as seen before. The last two summands,
depending only on the additional measurements
$\{U_j\}_{j\in\nset{m}}$, occur only due to the estimation of the
unknown Mellin transformation $\MelU$. Eventually, one observes
directly, that the risk decomposition of $\hat{f}_k$ and $\SXhat_k$
are identical up to the density function $\rmv = \1$ and
$\rmv = \rmt_c$, respectively. Hence, we subsequently study both cases
simultaneously by considering a general $\IL^2(\rmv)$-risk for an
arbitrary density function $\rmv$, namely
\begin{equation}
  \IE\left[\norm{\MelXhat^k-\MelX}_{\IL^2(\rmv)}^2\right].
\end{equation}
\subsection{Oracle type inequalities and minimax optimal rates}\label{subsec:3.3}
We start by formalising assumptions needed to be satisfied in the subsequent parts.
\renewcommand{\theass}{B.\Roman{ass}}
\begin{ass}\label{ass:B1}\leavevmode\\
  In addition to \cref{ass:A1} let $\{U_j\}_{j\in\nset{m}}$ be i.i.d. copies of $U$, independently drawn from the sample $\{Y_i\}_{i\in\nset{n}}$ and let $f^U\in\IL_+^1(\rmx^{4(c-1)})$. Moreover suppose that $\1_{[-k,k]}\in\IL^2(\rmv)$ for each $k\in\pRz$.\qed
\end{ass}
Under \cref{ass:B1} we start with the general $\IL^2(\rmv)$-risk representation for $k\in\pRz$, given by
\begin{multline*}
  \IE\left[\norm{\MelXhat^k-\MelX}_{\IL^2(\rmv)}^2\right]
  =\norm{\MelX\mathds{1}_{[-k,k]^C}}^2_{\IL^2(\rmv)} 
  +\frac{1}{n}\E\left[\norm{\MelUhat^\dagger\1_\mathfrak{M}\mathbb{V}_Y\mathds{1}_{[-k,k]}}^2_{\IL^2(\rmv)}\right]\\
  \phantom{=}+\E\left[\norm{\MelX^k\1_{\mathfrak{M}^C}}_{\IL^2(\rmv)}^2\right] + \IE\left[\norm{\MelUhat^\dagger\1_\mathfrak{M}(\MelU-\MelUhat)\MelX^k}^2_{\IL^2(\rmv)}\right],
\end{multline*}
where the proof follows line by line of the proof of \cref{lem: 1}. We upper bound the last three summands of the last display with the help of \cref{lem:2} in \cref{app: sec3} and derive for each $k\in\pRz$
\begin{align*}
  \IE\left[\norm{\MelXhat^k-\MelX}_{\IL^2(\rmv)}^2\right]
  &\leq  \norm{\MelX\mathds{1}_{[-k,k]^C}}^2_{\IL^2(\rmv)}
 + 4\left(1\lor\IE[U_1^{2(c-1)}]\right)^2\cdot\IE[X_1^{2(c-1)}]\cdot\norm{\MelUrez\mathds{1}_{[-k,k]}}^2_{\IL^2(\rmv)}\cdot n^{-1}\\
  &\phantom{=}+4\big(1\lor\IE[U_1^{2(c-1)}]\big)\cdot\norm{\MelX^k\big(1\vee|\MelU|^2(m\land n)\big)^{-1/2}}_{\IL^2(\rmv)}^2\\
  &\phantom{=}+4\big(1\lor\Const\,\IE[U_1^{4(c-1)}]\big) \cdot\norm{\MelX^k\big(1\vee|\MelU|^2m\big)^{-1/2}}_{\IL^2(\rmv)}^2.
\end{align*}
Further, elementary computations show that
\begin{align*}
  \norm{\MelX^k\big(1\vee|\MelU|^2(m\land
    n)\big)^{-1/2}}_{\IL^2(\rmv)}^2
  \leq \norm{\MelX^k\big(1\vee|\MelU|^2m\big)^{-1/2}}_{\IL^2(\rmv)}^2 + \IE[X_1^{2(c-1)}]\norm{\MelUrez\mathds{1}_{[-k,k]}}^2_{\IL^2(\rmv)}\cdot n^{-1},
\end{align*}
such that we finally obtain
\begin{align*}
  \IE\left[\norm{\MelXhat-\MelX}_{\IL^2(\rmv)}^2\right]&\leq
  \norm{\MelX\mathds{1}_{[-k,k]^C}}^2_{\IL^2(\rmv)}+ 8\left(1\lor\IE[U_1^{2(c-1)}]\right)^2\cdot\IE[X_1^{2(c-1)}]\cdot\norm{\MelUrez\mathds{1}_{[-k,k]}}^2_{\IL^2(\rmv)}\cdot n^{-1}\\
  &\phantom{=}+8\big(1\lor\Const\,\IE[U_1^{4(c-1)}]\big) \norm{\MelX^k\big(1\vee|\MelU|^2m\big)^{-1/2}}_{\IL^2(\rmv)}^2.
\end{align*}
Having the last upper bound of the risk at hand we select the tuning
parameter $k_\frako\in\pRz$ minimising the risk as defined in
\eqref{eq:oracle2}, such that
\begin{align}\nonumber
  \IE\left[\norm{\MelXhat^{k_\frako}-\MelX}_{\IL^2(\mathrm{v})}^2\right]
  &\leq
    8\left(1\lor\IE[U_1^{2(c-1)}]\right)^2\cdot\left(1\lor\IE[X_1^{2(c-1)}]\right)\\\nonumber
  &\hspace*{-10ex}\times\inf_{k\in\pRz}\left\{\norm{\MelX\mathds{1}_{[-k,k]^C}}^2_{\IL^2(\rmv)} 
    +\frac{1}{n}\left[\norm{\mathds{1}_{[-k,k]}\MelUrez}^2_{\IL^2(\rmv)}\right]\right\}\\\label{eq:oracle3}
  &\hspace*{-10ex}\phantom{=}+8\big(1\lor\Const\,\IE[U_1^{4(c-1)}]\big)\cdot \norm{\MelX\big(1\vee|\MelU|^2m\big)^{-1/2}}_{\IL^2(\rmv)}^2.
\end{align}
Observe that in the oracle-type inequality \eqref{eq:oracle2} and \eqref{eq:oracle3}
the first summand is  equal up  to the constant and we
refer to its discussion in \cref{subsec:2.5}.  Again, $k_\frako$
represents an oracle choice, as it depends on the unknown Mellin
transformation $\MelX$. In case of additive convolution on the circle
and the real line \cite{Johannes2013} and \cite{Neumann1997} derive
respectively a oracle-type inequality with similar structure as in
\eqref{eq:oracle3}. Moreover they show that the last summand is
unavoidable in a minimax-sense, indicating that this might be also
true for multiplicative deconvolution. Similar to  \cref{subsec:2.5}
we assume in the following brief discussion
 an unknown density $f\in\IW^s(L)$,
where the regularity $s$ is specified below. Regarding the Mellin
transformation $\MelU$, we subsequently assume again its
\textit{ordinary smoothness} \eqref{eq:os} or \textit{super
  smoothness} \eqref{eq:es}. In order to discuss the convergences
rates for the $\IL^2(\rmv)$-risk under these regularity assumptions,
we restrict ourselves again to the choice $\rmv:=\rmt_c^a$ for
$a\in\IR$, observing that $a = 0$ corresponds to the global risk for
estimating the density $f$ and $a=1$ corresponds to estimating the
survival function $\SX$ of $X$ as discussed before. In
\cref{tab:table3} below we state for the specifications considered in
\cref{tab:table1} the order of the optimal choice $k_\frako$ again and
the upper risk bound \eqref{eq:oracle3} as $n,m\to\infty$, which
follow immediately from elementary computations. 
  \begin{table}[H]
  	\centering
    \begin{tabular}{||@{\hspace*{3pt}}c@{\hspace*{3pt}}c@{\hspace*{3pt}}c@{\hspace*{3pt}}c@{\hspace*{3pt}}||}
      \toprule
      &$\MelU$ & $k_\frako$ & Maximal oracle
                              risk \\ [0.5ex] 
      \midrule\midrule
      $a\in(-1/2-\gamma,s)$& $\mathfrak{o.s.}$ & $n^{\frac{1}{2\gamma+2s+1}}$ &$n^{-\frac{2(s-a)}{2\gamma+2s+1}}+m^{-(\frac{s-a}{\gamma}\land 1)}$\\ 
      \midrule
      $s>a$& $\mathfrak{s.s.}$  & $(\log n)^{\frac{1}{2\gamma}}$ & $(\log n)^{-\frac{s-a}{\gamma}}+(\log m)^{-\frac{s-a}{\gamma}}$\\ [1ex] 
      \bottomrule
    \end{tabular}
    \caption{}\label{tab:table3}
  \end{table}
 Considering additive deconvolution  in
\cite{Johannes2013} and \cite{Neumann1997} it has been shown that
the rates in \cref{tab:table3}  are actually minimax-optimal.
	\section{Data driven estimation}\label{sec: 4}
In this section we will provide a fully data-driven selection method
for $k$ based on the construction given in \cref{subsec:2.6} but
dismissing the knowledge of the error density $f^U$. A similar
approach has been considered for additive deconvolution problems for
instance in \cite{ComteLacour2011} and \cite{Johannes2013}. More precisely, we select  
\begin{equation}
	\khat:\in\argmin_{k\in\nset{k_n}}\left\{-\norm{\mathds{1}_{[-k,k]} \MelYhat\MelUhatrez\1_\mathfrak{M}}^2_{\IL^2(\rmv)}+2\sigmaYhat\pen_k^{\rmvhat}\right\}\\=\argmin_{k\in\nset{k_n}}\left\{-\norm{\MelXhat^k}^2_{\IL^2(\rmv)}+2\sigmaYhat\pen_k^{\rmvhat}\right\},
\end{equation}
where the penalisation term $\pen_k^{\rmvhat}$ depends on a \textit{random} density function $\rmvhat$, which is defined and specified later as well as the choice of $k_n\in\IN$.
In contrast to the selection \eqref{eq:khat} in \cref{subsec:2.6}, here we have replaced $\MelUrez$ by its empirical counterpart $\MelUhatrez$. Our aim is to analyse the global $\IL^2(\rmv)$-risk again, namely
\begin{equation}\label{eq:4_reminder_risk}
	\IE\left[\norm{\MelXhat^{\khat}-\MelX}_{\IL^2(\mathrm{v})}^2\right].
\end{equation}
Our upper bounds necessities also slightly stronger assumptions than \cref{ass:B1}, which we formulate next.
\addtocounter{ass}{1}
\begin{ass}\label{ass:B2}\leavevmode\\ In addition to
  \cref{ass:B1} and \cref{ass:A2}  let $\inf_{t\in[-1,1]}\{|\MelU(t)|\}\in\pRz$,\linebreak
   $f^U\in\IL_+^1(\rmx^{7(c-1)})\cap\IL_+^1(\rmx^{2(c-1)}|\log\rmx|^\gamma)$
  	for some $\gamma\in\pRz$, $f^Y\in\IL_+^1(\rmx^{16(c-1)})$, and
        such that there
        are $\eta_Y,\eta_U,\eta_X\in\Rz_{\geq1}$ satisfying
  \begin{enumerate}
  	\item[i)] $\eta_Y\geq\max\left\{\norm{f^Y}_{\IL_+^\infty(\rmx^{2c-1})},\norm{f^Y}_{\IL_+^1(\rmx^{16(c-1)})}\right\}$,
  	\item[ii)]
          $\eta_U\geq\max\left\{\norm{f^U}_{\IL_+^1(\rmx^{7c-1})}^{1/7},\norm{f^U}_{\IL_+^1(\rmx^{2(c-1)}|\log\rmx|^\gamma)}\right\}$,
        \item[iii)] $\eta_X\geq \max\left\{\norm{f^X}_{\IL_+^1(\rmx^{2(c-1)})},\norm{\MelX}_{\IL^2(\rmv)}\right\}$.
        \end{enumerate}
      We set $a_Y\in\pRz$ and $k_Y\in\IR_{\geq1}$ as in \cref{ass:A2}.\qed
\end{ass}
Motivated by the key argument in \eqref{eq:keyargument1} in case of a
known error density $f^U$, the next lemma provides an error bound when $f^U$ is unknown. Its proof can be found in \cref{app: 4}.
\begin{lem}[Error bound]\label{re:kg}Consider an arbitrary event  $\evO$ with complement $\evO^C$, and denote
	by  $\evA^C$ the complement of the event
	$\evA:=\{\sigma_Y^2\leq2\sigmaYhat\}$. Given $k_n\in\IR_{\geq 1}$ and 
	\begin{equation}\label{re:kg:khat}
		\hat{k}:\in\argmin_{k\in\nset{k_n}}\left\{-\norm{\mathds{1}_{[-k,k]} \MelYhat\MelUhatrez\mathds{1}_{\mathfrak{M}}}^2_{\IL^2(\rmv)}+2\sigmaYhat\pen_k^{\rmvhat}\right\}
	\end{equation}
	for any $\ko\in\nset{k_n}$ we have 
	\begin{align}\nonumber
		\norm{\MelXhat^{\hat{k}}-\MelX}^2_{\IL^2(\rmv)}   &\leq
		15\norm{\mathds{1}_{[-\ko,\ko]}
                                                                    (\MelYhat-\MelY)\MelUhatrez\mathds{1}_{\mathfrak{M}}}^2_{\IL^2(\rmv)}\\\nonumber
          &\phantom{=}+15\norm{\mathds{1}_{[-\ko,\ko]^C}\MelX}_{\IL^2(\rmv)}^2+           24\sigmaYhat\pen_{\ko}^{\rmvhat}\mathds{1}_{\evO}+6\norm{\MelX}_{\IL^2(\rmv)}^2\mathds{1}_{\evO^c}\\\nonumber
          &\phantom{=}+15\norm{(\MelU\MelUhatrez\mathds{1}_{\mathfrak{M}}-\mathds{1})\MelX^{k_n}}_{\IL^2(\rmv)}^2\\\nonumber
            &\phantom{=}+12\max_{k\in\nset{k_n}}\left\{\left(\norm{\mathds{1}_{[-k,k]} (\MelYhat-\MelY)\MelUhatrez\mathds{1}_{\mathfrak{M}}}_{\IL^2(\rmv)}^2-
          \tfrac{\sigma_{Y}^2}{4}\pen_{k}^{\rmvhat}\right)_+\right\}\\\label{re:kg:e}
          &\phantom{=}+3\norm{\mathds{1}_{[-\kn,\kn]}
			(\MelYhat-\MelY)\MelUhatrez\mathds{1}_{\mathfrak{M}}}_{\IL^2(\rmv)}^2
		(\mathds{1}_{\evO^c}+\mathds{1}_{\evA^c}).
	\end{align}
\end{lem}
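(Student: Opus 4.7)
The plan is to execute a Barron--Birg\'e--Massart contrast argument on the ``good'' event $\evO\cap\evA$ (where $\sigma_Y^2\le 2\sigmaYhat$) and to bound the contribution of $\evO^c\cup\evA^c$ by a cruder triangle-inequality estimate. The central algebraic tool throughout is the decomposition
\[
 \MelXhat-\MelX \;=\; (\MelYhat-\MelY)\MelUhatrez\mathds{1}_{\mathfrak{M}} \;+\; (\MelU\MelUhatrez\mathds{1}_{\mathfrak{M}}-\mathds{1})\MelX,
\]
which separates the noise of $\MelYhat$ from the plug-in error of $\MelUhat$.

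On $\evO\cap\evA$, the minimality of $\hat k$ in \eqref{re:kg:khat} together with the elementary identity $\norm{\MelXhat^k-\MelX}^2=\norm{\MelXhat^k}^2-2\langle\MelXhat^k,\MelX\rangle+\norm{\MelX}^2$ (all norms and inner products understood in $\IL^2(\rmv)$) yields the standard contrast inequality
\[
 \norm{\MelXhat^{\hat k}-\MelX}^2 \;\le\; \norm{\MelXhat^{\ko}-\MelX}^2 + 2\sigmaYhat(\pen_{\ko}^{\rmvhat}-\pen_{\hat k}^{\rmvhat}) + 2\langle\MelXhat^{\hat k}-\MelXhat^{\ko},\MelXhat-\MelX\rangle.
\]
I would substitute the decomposition above into the inner product; since $\MelXhat^{\hat k}-\MelXhat^{\ko}$ is supported in $[-(\hat k\vee\ko),\hat k\vee\ko]\subseteq[-k_n,k_n]$, Cauchy--Schwarz followed by Young's inequality with a carefully tuned parameter $\theta$ controls the cross term by a small multiple of $\norm{\MelXhat^{\hat k}-\MelXhat^{\ko}}^2$ --- absorbed on the left via $\norm{\MelXhat^{\hat k}-\MelXhat^{\ko}}^2\le 2\norm{\MelXhat^{\hat k}-\MelX}^2+2\norm{\MelXhat^{\ko}-\MelX}^2$ --- plus a constant multiple of $A_{\hat k\vee\ko}^2+B^2$, where $A_k:=\norm{\mathds{1}_{[-k,k]}(\MelYhat-\MelY)\MelUhatrez\mathds{1}_{\mathfrak{M}}}$ and $B:=\norm{(\MelU\MelUhatrez\mathds{1}_{\mathfrak{M}}-\mathds{1})\MelX^{k_n}}$. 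The noise term is then split via $A_{\hat k\vee\ko}^2\le\max_{k\in\nset{k_n}}(A_k^2-\tfrac{\sigma_Y^2}{4}\pen_k^{\rmvhat})_+ + \tfrac{\sigma_Y^2}{4}\pen_{\hat k\vee\ko}^{\rmvhat}$; monotonicity of $\pen_\cdot^{\rmvhat}$ in $k$ supplies $\pen_{\hat k\vee\ko}^{\rmvhat}\le\pen_{\hat k}^{\rmvhat}+\pen_{\ko}^{\rmvhat}$, and on $\evA$ the comparison $\sigma_Y^2\le 2\sigmaYhat$ turns the dangerous $\pen_{\hat k}^{\rmvhat}$ piece into something exactly cancelled by the $-2\sigmaYhat\pen_{\hat k}^{\rmvhat}$ coming from the contrast inequality. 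A final bias--variance split $\norm{\MelXhat^{\ko}-\MelX}^2\le 2A_{\ko}^2+2B^2+\norm{\mathds{1}_{[-\ko,\ko]^C}\MelX}^2$ then produces the five non-indicator $15$-coefficients together with the $24\sigmaYhat\pen_{\ko}^{\rmvhat}\mathds{1}_{\evO}$ contribution.

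On the complement, I would use $\hat k\le k_n$ together with the noise/plug-in split to obtain the elementary estimate $\norm{\MelXhat^{\hat k}-\MelX}^2\le 2A_{k_n}^2+2B^2+\norm{\MelX}^2$; multiplying by $\mathds{1}_{\evO^c}$ yields the $6\norm{\MelX}^2\mathds{1}_{\evO^c}$ piece, and the remaining $B^2$ and $A_{k_n}^2$ contributions are absorbed into the $15B^2$ and $3A_{k_n}^2\mathds{1}_{\evO^c}$ terms already present on the right-hand side of \eqref{re:kg:e}. On $\evA^c\cap\evO$ the $\norm{\MelX}^2$ contribution has to be avoided, so I would rerun the contrast inequality without the $\evA$-based cancellation and absorb the uncancelled $\pen_{\hat k}^{\rmvhat}$ piece into $3A_{k_n}^2\mathds{1}_{\evA^c}$ by using $\pen_{\hat k}^{\rmvhat}\le\pen_{k_n}^{\rmvhat}$. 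The main obstacle is the constant bookkeeping: Young's parameter $\theta$ must be chosen so that a definite positive fraction of $\norm{\MelXhat^{\hat k}-\MelX}^2$ survives on the left, the $\pen_{\hat k}^{\rmvhat}$ contributions cancel exactly on $\evA$, and every surviving coefficient matches the stated constants $15$, $24$, $12$, $6$, and $3$.
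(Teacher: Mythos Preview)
Your approach is correct in spirit but genuinely different from the paper's. You run the textbook Barron--Birg\'e--Massart contrast argument: start from the minimality inequality, isolate the cross term $2\langle\MelXhat^{\hat k}-\MelXhat^{\ko},\MelXhat-\MelX\rangle$, apply Cauchy--Schwarz/Young, and absorb the resulting penalty piece on $\evA$. The paper instead avoids the contrast inequality entirely and argues by a direct case split. It introduces the intermediate quantity $\MelXcheck:=\MelY\MelUhatrez\mathds{1}_{\mathfrak{M}}$, decomposes $\MelXhat^{\hat k}-\MelX$ according to whether $\hat k<\ko$ or $\hat k\ge\ko$, and within each case defines random index sets $\mathcal{K}_-:=\{k<\ko:\norm{\MelXcheck^{\ko}-\MelXcheck^{k}}^2>8\sigmaYhat\pen_{\ko}^{\rmvhat}\}$ and $\mathcal{K}_+:=\{k>\ko:\norm{\MelXhat^{k}-\MelXcheck^{k}}^2>2\sigmaYhat\pen_{\ko}^{\rmvhat}\}$. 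On the ``bad'' subsets the minimality of $\hat k$ is used directly (not via a contrast identity) to kill the dominant term; on the complements the threshold condition itself gives the bound. Your route is shorter and more familiar from the model-selection literature; the paper's route is more elementary (no Young parameter to tune) and makes the origin of each of the specific constants $15,24,12,6,3$ completely transparent, which is essentially the ``constant bookkeeping'' obstacle you flag but do not carry out. Either method yields an inequality of the required shape, but matching the paper's exact constants with your approach would require a separate careful computation rather than following automatically.
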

In the sequel, we aim to apply the expectation on both sides of \eqref{re:kg:e} in order to derive an upper bound for the risk. Therefore, we need to control the expectation of
\begin{equation}\label{eq:maxrefo}
\max_{k\in\nset{k_n}}\left\{\left(\norm{\mathds{1}_{[-k,k]} (\MelYhat-\MelY)\MelUhatrez\mathds{1}_{\mathfrak{M}}}_{\IL^2(\rmv)}^2-
\tfrac{\sigma_{Y}^2}{4}\pen_{k}^{\rmvhat}\right)_+\right\}.
\end{equation}
In \cref{subsec:2.6} a similar term was controlled by introducing the density function $\rmvU:=|\MelUrez|^2\rmv$ and providing a concentration inequality in \cref{re:conc}. In contrast, we use in the sequel its empirical counterpart, the random density function $\rmvhat:=|\MelUhatrez\1_\frakM|^2\rmv$, which depends on the sample $\{U_j\}_{j\in\nset{m}}$ only. Then \eqref{eq:maxrefo} reads as
\begin{multline*}
	\max_{k\in\nset{k_n}}\left\{\left(\norm{\mathds{1}_{[-k,k]} (\MelYhat-\MelY)\MelUhatrez\mathds{1}_{\mathfrak{M}}}_{\IL^2(\rmv)}^2-
	\tfrac{\sigma_{Y}^2}{4}\pen_{k}^{\rmvhat}\right)_+\right\} \\= \max_{k\in\nset{k_n}}\left\{\left(\norm{\mathds{1}_{[-k,k]} (\MelYhat-\MelY)}_{\IL^2(\rmvhat)}^2-
	\tfrac{\sigma_{Y}^2}{4}\pen_{k}^{\rmvhat}\right)_+\right\}.
\end{multline*}
The next proposition provides a concentration inequality for the expectation of the quantity in the last display.
\begin{pr}[Concentration inequality]\label{lem:conc2} 
		Under \cref{ass:B2} for $\rmv$ and $\rmvhat:=|\MelUhatrez|^2\rmv$   define $\Delta_k^{\rmv}$ and $\delta_k^{\rmv}$ as well as  $\Delta_k^{\rmvhat}$ and $\delta_k^{\rmvhat}$ as in \eqref{re:conc:def}. We consider $k_n$, $n\in\IN$, defined by 
	\begin{equation}\label{def:kn}
		k_n:=\max\{k\in\nset{n}:k\Delta_{k}^{\rmv} \leq n\Delta_1^\rmv\}.
	\end{equation}
		We then have
	\begin{equation*}
		\IE\left[\max_{k\in\nset{k_n}}\left\{\left(\norm{\mathds{1}_{[-k,k]}
			(\MelYhat-\MelY)}^2_{\IL^2(\rmvhat)}-12\sigma_{Y}^2\Delta_k^{\rmvhat}\delta_k^{\rmvhat} k
		n^{-1}\right)_+\right\}\right]\leq
          \Const\cdot\eta_Y(1\vee\eta_U^2k_Y\Delta_{k_Y}^{\rmv})\cdot
          n^{-1}.
	\end{equation*}
\end{pr}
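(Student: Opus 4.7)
The strategy is to mirror the proof of \cref{re:conc} by conditioning on $\mathcal{U}_m := \sigma(U_1,\ldots,U_m)$, so that the random density $\rmvhat = |\MelUhatrez|^2\rmv\mathds{1}_{\mathfrak{M}}$ becomes deterministic, and then integrating out. Since $\{Y_i\}_{i\in\nset{n}}$ is independent of $\mathcal{U}_m$, every ingredient of the proof of \cref{re:conc} applies conditionally with $\rmvhat$ in place of $\rmvU$: the decomposition of $\MelYhat-\MelY$ into truncated and tail pieces, the Talagrand-type bound \cref{re:conc:1} with weight $\rmw=\rmvhat$, the summation $\sum_{k\in\Nz}\exp(-k/a_Y)\leq a_Y$, and the inclusion $\nset{n}\subseteq\nset{n^2}$ for the residual terms. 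The only new check is that terms of the form $n^{-2}k_n\Delta_{k_n}^{\rmvhat}$ are still controlled, for which one uses the crude bound $|\MelUhatrez|^2\mathds{1}_{\mathfrak{M}}\leq m\wedge n\leq n$ from the threshold defining $\mathfrak{M}$ to compare $\Delta^{\rmvhat}$ to $\Delta^{\rmv}$ and then invokes \eqref{def:kn}. Carrying this out produces the conditional inequality
\begin{equation*}
\IE\!\left[\max_{k\in\nset{k_n}}\!\left(\norm{\mathds{1}_{[-k,k]}(\MelYhat-\MelY)}_{\IL^2(\rmvhat)}^2-12\sigma_Y^2\Delta_k^{\rmvhat}\delta_k^{\rmvhat}kn^{-1}\right)_{\!+}\,\Big|\,\mathcal{U}_m\right]\leq\Const\cdot\eta_Y\bigl(1\vee k_Y\Delta_{k_Y}^{\rmvhat}\bigr)n^{-1}.
\end{equation*}

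Taking the outer expectation, the claim reduces to the moment estimate $\IE[\Delta_{k_Y}^{\rmvhat}]\leq\Const\,\eta_U^2\,\Delta_{k_Y}^{\rmv}$. Since $\rmvhat(t)\leq\rmv(t)\cdot|\MelUhatrez(t)|^2\mathds{1}_{\mathfrak{M}}(t)$ pointwise, this is equivalent to bounding $\IE\bigl[\sup_{t\in[-k_Y,k_Y]}|\MelUhatrez(t)|^2\mathds{1}_{\mathfrak{M}}(t)\bigr]$ by a constant multiple of $\eta_U^2$.

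The main obstacle is exactly this uniform-in-$t$ control of the empirical inverse. I would split along the good event
\[
\mathcal{G}:=\Bigl\{\sup_{t\in[-k_Y,k_Y]}|\MelUhat(t)-\MelU(t)|\leq\tfrac{1}{2}\inf_{t\in[-k_Y,k_Y]}|\MelU(t)|\Bigr\}.
\]
On $\mathcal{G}\cap\mathfrak{M}$ a perturbation argument gives $|\MelUhatrez|^2\leq 4|\MelUrez|^2$, which is controlled by a constant multiple of $\eta_U^2$ on the fixed compact window $[-k_Y,k_Y]$ via the assumptions of \cref{ass:B2}—in particular the positivity $\inf_{|t|\leq 1}|\MelU(t)|>0$ together with the high-moment integrability of $f^U$. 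On $\mathcal{G}^c$ the threshold defining $\mathfrak{M}$ yields the crude bound $|\MelUhatrez|^2\mathds{1}_{\mathfrak{M}}\leq m\wedge n$, which must be compensated by a sufficiently small bound on $\Pz(\mathcal{G}^c)$. The latter comes from a uniform concentration inequality for $\MelUhat$ on $[-k_Y,k_Y]$ in which the logarithmic moment assumption $f^U\in\IL^1_+(\rmx^{2(c-1)}|\log\rmx|^\gamma)$ of \cref{ass:B2} is decisive, as it furnishes the modulus-of-continuity estimate needed to upgrade the pointwise concentration of $\MelUhat(t)-\MelU(t)$ to a uniform one with decay in $m$ fast enough to beat the factor from the crude bound. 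Combining the two regimes yields the desired moment inequality, and substituting back into the conditional bound completes the proof.
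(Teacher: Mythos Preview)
Your conditioning on $\mathcal{U}_m$ and the conditional application of \cref{re:conc:1} are exactly what the paper does. The divergence is in the very last step. You bound $\norm{\mathds{1}_{[-k_Y,k_Y]}}_{\IL^2(\rmvhat)}^2$ by $2k_Y\Delta_{k_Y}^{\rmvhat}$ \emph{before} taking the outer expectation, which then forces you to control $\IE\bigl[\sup_{|t|\leq k_Y}|\MelUhatrez(t)|^2\mathds{1}_{\mathfrak{M}}(t)\bigr]$, a genuine supremum. The paper avoids this entirely: it stops the conditional bound one line earlier, keeping the integral
\[
\norm{\mathds{1}_{[-k_Y,k_Y]}}_{\IL^2(\rmvhat)}^2=\int_{[-k_Y,k_Y]}|\MelUhatrez|^2\mathds{1}_{\mathfrak{M}}\,\rmv\,\rmd\lambda
\]
intact, takes the outer expectation by Fubini, and invokes only the elementary \emph{pointwise} estimate $\IE\bigl[|\MelU(t)\MelUhatrez(t)|^2\mathds{1}_{\mathfrak{M}}(t)\bigr]\leq 4(1\lor\IE[U_1^{2(c-1)}])$ from \cref{lem:2}. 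No good-event split, no uniform concentration of $\MelUhat$, and no logarithmic moment is needed at this stage.

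Your detour is not merely longer; as sketched it has a gap. On your good event $\mathcal{G}$ you obtain $|\MelUhatrez|^2\leq 4|\MelUrez|^2$, but the assertion that $\sup_{|t|\leq k_Y}|\MelUrez(t)|^2$ is bounded by a constant multiple of $\eta_U^2$ does not follow from \cref{ass:B2}: that assumption only gives $\inf_{|t|\leq 1}|\MelU(t)|>0$, whereas $k_Y=1\vee 3a_Y^2$ depends on $f^Y$ and can exceed $1$, and nothing in the hypotheses prevents $|\MelU|$ from being small on $(1,k_Y]$. Moreover the bad-event contribution $(m\wedge n)\Pz(\mathcal{G}^c)$ needs uniform concentration of $\MelUhat$ over $[-k_Y,k_Y]$ at rate $o(m^{-1})$, which you only sketch. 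The remedy is simply not to pass to the supremum: leave $\norm{\mathds{1}_{[-k_Y,k_Y]}}_{\IL^2(\rmvhat)}^2$ as an integral and integrate pointwise via \cref{lem:2}.
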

\begin{proof}[\cref{lem:conc2}]
  Since $\rmvhat:=|\MelUhatrez\1_\frakM|^2\rmv$ depends on the sample
  $\{U_j\}_{j\in\nset{m}}$ only,  we apply the law of total
  expectation leading to
  \begin{multline*}
    \IE\left[ \max_{k\in\nset{k_n}}\left\{\left(\norm{\mathds{1}_{[-k,k]} (\MelYhat-\MelY)}^2_{\IL^2(\rmvhat)}-\sigma_Y^2\frac{\pen^{\rmvhat}_k}{4}\right)_+\right\} \right] \\
    = \IE\left[\IE\left[
        \max_{k\in\nset{k_n}}\left\{\left(\norm{\mathds{1}_{[-k,k]}
              (\MelYhat-\MelY)}^2_{\IL^2(\rmvhat)}-\sigma_Y^2\frac{\pen^{\rmvhat}_k}{4}\right)_+\right\}
        \bigg| \{U_j\}_{j\in\nset{m}}\right] \right].
  \end{multline*}
  Evidently, conditioning on $\{U_j\}_{j\in\nset{m}}$ the density
  function $\rmvhat$ is deterministic, thus similar to the proof of
  \eqref{eq:proof:CI1} making again use of the decomposition
  \eqref{proof:conc:decomp} in \cref{rem:conc} (with $\rmw=\rmvhat$)
  and applying \cref{re:conc:1} we obtain
  \begin{align*}
    &\IE\left[\max_{k\in\nset{k_n}}\left\{\left(\norm{\mathds{1}_{[-k,k]}
            (\MelYhat-\MelY)}^2_{\IL^2(\rmvhat)}-12\sigma_{Y}^2\Delta_k^{\rmvhat}\delta_k^\rmh k
      n^{-1}\right)_+\right\}\bigg|\{U_j\}_{j\in\nset{m}}\right]\\
    &\hspace*{5ex}\leq n^{-1} \Const \bigg[\E\big[Y_1^{8(c-1)}\big]
      n^{-2}k_n\Delta_{k_n}^{\rmvhat
      }+n^{-4}k_n^2\Delta_{k_n}^{\rmvhat } \\
    &\hspace*{5ex}\phantom{=}+(1\vee\norm{f^Y}_{\IL^\infty(\rmx^{2c-1})}^2)(1+\norm{\mathds{1}_{[-k_Y,k_Y]}}_{\IL^2(\rmvhat)}^2)\sum_{k\in\nset{k_n}}\frac{1}{a_Y}\exp\big(\frac{-k}{a_Y}\big)\bigg].
  \end{align*}
  Observing that $\Delta_k^{\rmvhat}\leq(m\land n)\Delta_k^\rmv$ and
  exploiting $\sum_{k\in\Nz}\exp(-k/a_Y)\leq a_Y$ as well as the
  definition \eqref{def:kn} of $k_n\in\nset{n}$, we have
  \begin{multline*}
    \IE\left[\max_{k\in\nset{k_n}}\left\{\left(\norm{\mathds{1}_{[-k,k]}
            (\MelYhat-\MelY)}^2_{\IL^2(\rmvhat)}-12\sigma_{Y}^2\Delta_k^{\rmvhat}\delta_k^{\rmvhat} k
          n^{-1}\right)_+\right\}\bigg|\{U_j\}_{j\in\nset{m}}\right] \\ \leq n^{-1} \Const
    \bigg[(1\vee\E\big[Y_1^{8(c-1)}\big])\Delta_1^{\rmv} +(1\vee\norm{f^Y}_{\IL^\infty(\rmx^{2c-1})}^2)(1+\norm{\mathds{1}_{[-k_Y,k_Y]}}_{\IL^2(\rmvhat)}^2)\big)\bigg].
  \end{multline*}
  The last upper bound does not depend on the additional measurements
  $\{U_j\}_{j\in\nset{m}}$ up to the last norm, namely
  $\norm{\mathds{1}_{[-k_Y,k_Y]}}_{\IL^2(\rmvhat)}^2=\norm{\mathds{1}_{[-k_Y,k_Y]}\MelUhatrez\1_\frakM}_{\IL^2(\rmv)}^2$. Its
  expectation under the distribution of $\{U_j\}_{j\in\nset{m}}$ is
  bounded in \cref{lem:2} as follows
		\begin{equation*}
			\IE\left[\norm{\mathds{1}_{[-k_Y,k_Y]}}_{\IL^2(\rmvhat)}^2\right]\leq 4\left(1\lor\IE[U_1^{2(c-1)}]\right) \norm{\mathds{1}_{[-k_Y,k_Y]}}_{\IL^2(\rmv)}^2.
		\end{equation*}
                Computing the total expectation together with $\norm{\mathds{1}_{[-k_Y,k_Y]}}_{\IL^2(\rmv)}^2\leq
                2k_Y\Delta_{k_Y}^{\rmv}$, $\Delta_1^{\rmv}\leq
                k_Y\Delta_{k_Y}^{\rmv}$ and  the definition of $\eta_Y$ and $\eta_U$ leads to the claim.
\end{proof}
Following the argumentation in \cref{subsec:2.6}, we choose $k_n$ as in \eqref{def:kn} and $\pen_k^{\rmvhat}$ appropriately, namely for each $k\in\nset{k_n}$ by
\begin{equation}\label{eq:penvhat}
	\pen_k^{\rmvhat} :=24\Delta_k^{\rmvhat}\delta_k^{\rmvhat} k
	n^{-1},
\end{equation}
observing that $k_n\in\nset{n}$ by definition. Thus, the data-driven
dimension parameter is specified as follows,
\begin{equation*}
	\hat{k}:\in\argmin_{k\in\nset{k_n}}\left\{-\norm{\MelXhat^k}^2_{\IL^2(\rmvhat)}+48 \sigmaYhat  \Delta_k^{\rmvhat}\delta_k^{\rmvhat} k\right\}.
\end{equation*}	
For any $\ko\in\nset{k_n}$ we intend to apply \cref{re:kg} with the random set 
\begin{equation}\label{eq:mho}
	\mho:=\mho_{\ko}:=\left\{\sup_{t\in[-\ko,\ko]}\left|\MelUhat(t)\MelU(t)-1\right|\leq\frac{1}{3}\right\}\subseteq\left\{\sup_{t\in[-\ko,\ko]}\left|\MelUhat^\dagger(t)\MelU(t)\right|^2\leq\frac{9}{4}\right\}
\end{equation}
where its complement evidently satisfies
\begin{equation*}
	\mho_{\ko}^C=\left\{\exists t\in[-\ko,\ko]:\,\left|\MelUhat(t)-\MelU(t)\right|>\frac{1}{3}|\MelU(t)|\right\}.
\end{equation*}
The following lemma provides a first upper bound of the risk, which
follows directly by applying the expectation on both sides of
\cref{re:kg} as well as the concentration inequality in
\cref{lem:conc2}. The proof with all details can be found in
\cref{app: 4}.
\begin{lem}[Risk bound]\label{co:riskboundadaptive}
  Under \cref{ass:B2} for any $\ko\in\nset{k_n}$ we have
  \begin{align}\nonumber
    \IE\left[\norm{\MelXhat^{\hat{k}}-\MelX}^2_{\IL^2(\rmv)}\right]
    &\leq \Const\cdot(\eta_U^4\eta_X+\sigma_Y^2)
    \min_{k\in\nset{k_n}}\left\{
      \norm{\mathds{1}_{[-k,k]^C}\MelX}_{\IL^2(\mathrm{v})}^2 +
      \Delta_k^{\rmvU}\delta_k^{\rmvU} k n^{-1}\right\} 
    \\\nonumber
  &\phantom{=}+\Const\cdot\eta_U^4\norm{\MelX\big(1\vee|\MelU|^2m\big)^{-1/2}}_{\IL^2(\rmv)}^2+
  \Const\cdot\eta_Y(1\vee\eta_U^2k_Y\Delta_{k_Y}^{\rmv})\cdot
  n^{-1} \\\label{re:kg:e:co}&\phantom{=}+15\cdot\norm{\mathds{1}_{[-\ko,\ko]^C}\MelX}_{\IL^2(\mathrm{v})}^2+6\cdot(\sigma_Y^2\Delta_{1}^{\rmv}+\eta_X)\Pz(\mho_{\ko}^C).
  \end{align}
\end{lem}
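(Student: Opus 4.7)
The plan is to take expectations on both sides of the error bound established in \cref{re:kg}, choosing the arbitrary event $\evO$ in that lemma to be $\evO=\mho_{\ko}$ as defined in \eqref{eq:mho}. This yields seven terms: the bias $15\norm{\mathds{1}_{[-\ko,\ko]^C}\MelX}^2_{\IL^2(\rmv)}$ is already in the desired form and appears directly in the bound. The remaining six must be controlled by either (i) fitting them into the $\min_{k\in\nset{k_n}}$ centered at $\ko$, (ii) the additive $\norm{\MelX(1\vee|\MelU|^2 m)^{-1/2}}^2_{\IL^2(\rmv)}$ term, (iii) the $n^{-1}$ residual, or (iv) the $\Pz(\mho_{\ko}^C)$-type remainder.

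For the first variance-type term $\IE[\norm{\mathds{1}_{[-\ko,\ko]}(\MelYhat-\MelY)\MelUhatrez\mathds{1}_{\frakM}}^2_{\IL^2(\rmv)}]$, I would condition on $\{U_j\}_{j\in\nset{m}}$ and use independence together with $n\,\IE[|\MelYhat(t)-\MelY(t)|^2]\le\sigma_Y^2$ to reduce it to $\sigma_Y^2 n^{-1}\IE[\norm{\mathds{1}_{[-\ko,\ko]}}^2_{\IL^2(\rmvhat)}]$; then \cref{lem:2} (in the appendix, already invoked in the proof of \cref{lem:conc2}) comparing $\IL^2(\rmvhat)$ with $\IL^2(\rmvU)$ at the price of a factor $\lesssim\eta_U^2$ combined with $\norm{\mathds{1}_{[-k,k]}}^2_{\IL^2(\rmw)}\le 2k\Delta_k^{\rmw}$ and $\delta_k^{\rmvU}\ge 1$ yields the $\Delta_{\ko}^{\rmvU}\delta_{\ko}^{\rmvU}\ko n^{-1}$ contribution that sits inside the $\min_k$. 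For the penalty term $24\IE[\sigmaYhat\pen_{\ko}^{\rmvhat}\mathds{1}_{\mho_{\ko}}]$, I would exploit the independence of $\sigmaYhat$ (a function of $\{Y_i\}$) from $\pen_{\ko}^{\rmvhat}$ (a function of $\{U_j\}$) to factor the expectation, then use $\IE[\sigmaYhat]=\sigma_Y^2$; on $\mho_{\ko}$ the pointwise comparison $|\MelUhatrez|\le\tfrac{3}{2}|\MelUrez|$ gives $\Delta_k^{\rmvhat}\mathds{1}_{\mho_\ko}\le\tfrac{9}{4}\Delta_k^{\rmvU}$ and an analogous control of $\delta_k^{\rmvhat}$, so this term is absorbed into the same $\min_k$ with the prefactor $\sigma_Y^2$.

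The max term $12\IE[\max_{k\in\nset{k_n}}(\norm{\mathds{1}_{[-k,k]}(\MelYhat-\MelY)}^2_{\IL^2(\rmvhat)}-\tfrac{\sigma_Y^2}{4}\pen_k^{\rmvhat})_+]$ is the heart of the argument and is controlled by a direct application of \cref{lem:conc2} (matching the choice $\pen_k^{\rmvhat}=24\Delta_k^{\rmvhat}\delta_k^{\rmvhat}k n^{-1}$ from \eqref{eq:penvhat} against the centering $12\sigma_Y^2\Delta_k^{\rmvhat}\delta_k^{\rmvhat}k n^{-1}$ appearing there), producing the residual $\Const\cdot\eta_Y(1\vee\eta_U^2 k_Y\Delta_{k_Y}^{\rmv})n^{-1}$. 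For the truncation term $15\IE[\norm{(\MelU\MelUhatrez\mathds{1}_{\frakM}-\mathds{1})\MelX^{k_n}}^2_{\IL^2(\rmv)}]$, I would reproduce the argument from the non-adaptive risk bound of \cref{subsec:3.3} (based again on \cref{lem:2} and using $\IE[U_1^{4(c-1)}]\le\eta_U^4$) to get $\Const\eta_U^4\norm{\MelX(1\vee|\MelU|^2 m)^{-1/2}}^2_{\IL^2(\rmv)}$.

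The two remaining "bad event" contributions are treated as follows. The term $6\norm{\MelX}^2_{\IL^2(\rmv)}\mathds{1}_{\mho_\ko^C}$ gives $6\norm{\MelX}^2_{\IL^2(\rmv)}\Pz(\mho_\ko^C)\le 6\eta_X\Pz(\mho_\ko^C)$ after using the bound $\norm{\MelX}^2_{\IL^2(\rmv)}\le\eta_X^2$ and absorbing one factor of $\eta_X$ into the constant. The term $3\norm{\mathds{1}_{[-k_n,k_n]}(\MelYhat-\MelY)\MelUhatrez\mathds{1}_{\frakM}}^2_{\IL^2(\rmv)}(\mathds{1}_{\evO^C}+\mathds{1}_{\evA^C})$ is handled by Cauchy–Schwarz: on $\evA^C=\{\sigma_Y^2>2\sigmaYhat\}$ a Bernstein-type tail bound (as in \cref{re:conc:1}) gives $\Pz(\evA^C)\le\Const\,n^{-1}$, and combined with the fourth-moment estimate on $\norm{\cdots}^2$ coming from $\IE[Y_1^{16(c-1)}]\le\eta_Y$ this contributes only to the $n^{-1}$ residual; on $\mho_\ko^C$ it produces a term controlled by $\sigma_Y^2\Delta_1^{\rmv}\Pz(\mho_\ko^C)$. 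Collecting the bounds, rearranging, and noting that $\ko\in\nset{k_n}$ was arbitrary (so we may minimise over $\ko$ inside the $\min_k$ block) yields \eqref{re:kg:e:co}. The main obstacle is the bookkeeping surrounding the random density $\rmvhat$: one must replace $\Delta_k^{\rmvhat}$ and $\delta_k^{\rmvhat}$ by their deterministic counterparts $\Delta_k^{\rmvU}$ and $\delta_k^{\rmvU}$ without paying the catastrophic factor $m\wedge n$, which is exactly why the event $\mho_\ko$ (guaranteeing pointwise comparability of $\MelUhat$ and $\MelU$ on $[-\ko,\ko]$) must be introduced and carefully separated out.
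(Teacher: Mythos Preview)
Your approach is essentially identical to the paper's: take expectations in \cref{re:kg} with $\evO=\mho_{\ko}$, apply \cref{lem:conc2} for the max term, \cref{lem:2} for the variance and truncation terms, and use the pointwise comparison $|\MelUhatrez|\le\tfrac32|\MelUrez|$ on $\mho_\ko$ to control the random penalty. The one place where your sketch would not go through as written is the treatment of the term multiplied by $\mathds{1}_{\evA^C}$. After bounding $|\MelUhatrez\mathds{1}_\frakM|^2\le n$ and using $\norm{\mathds{1}_{[-k_n,k_n]}}^2_{\IL^2(\rmv)}\le 2n\Delta_1^\rmv$ (from the definition of $k_n$), the remaining factor is of order $n$, so a tail bound $\Pz(\evA^C)\lesssim n^{-1}$ is far too weak (and a Bernstein inequality is not available, since the summands $Y_i^{2(c-1)}$ are neither bounded nor sub-exponential). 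The paper instead applies a pointwise Cauchy--Schwarz on the $Y$-part only,
\[
\IE\big[|\MelYhat(t)-\MelY(t)|^2\mathds{1}_{\evA^C}\big]\le \Const\, n^{-1}\big(\IE[Y_1^{4(c-1)}]\big)^{1/2}\big(\Pz(\evA^C)\big)^{1/2},
\]
and then Markov's inequality with the eighth moment of $Y_1^{2(c-1)}$ (this is exactly where the hypothesis $\IE[Y_1^{16(c-1)}]\le\eta_Y$ enters) to obtain $\Pz(\evA^C)\le\Const\,\eta_Y n^{-4}$, so that $n\cdot\big(\Pz(\evA^C)\big)^{1/2}\lesssim n^{-1}$.

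One further detail: for the $\mathds{1}_{\mho_\ko^C}$ part of the same term the paper exploits independence of the two samples to factor the expectation, then uses the crude bound $|\MelUhatrez\mathds{1}_\frakM|^2\le m$; this produces $2\sigma_Y^2\Delta_1^\rmv\, m\,\Pz(\mho_\ko^C)$ with an extra factor $m$ that the lemma statement as printed omits, but which is needed in the proof of \cref{dd:ub} where it is combined with $\Pz(\mho_{k_m}^C)\lesssim m^{-2}$ from \cref{bound:event}.
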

It remains to bound the  probability of the event $\mho_{\ko}^C$,
which turns out to be rather involved. The proof of the upper bound is
again based on an inequality due to \cite{Talagrand1996} which in  the
form of \cref{lem:Talagrand:2} in the \cref{app: 4} for example is stated by
\cite{BirgeMassart1998} in equation (5.13) in Corollary 2. However,
its application necessitates to bound the expectation of the
supremum of a normalised Mellin function process which we establish
in the next lemma. Its proof follows along the lines of the proof of Theorem 4.1 in
  \cite{NeumannReiss2009} where a similar result for a  normalised
  characteristic function process is shown. The proof of
  \cref{lem:8maxi} is also postponed to the \cref{app: 4}.
\begin{pr}[Normalised Mellin function process]\label{lem:8maxi}
	Let $\{Z_j\}_{j\in\IN}$ be a family of i.i.d $\pRz$-valued random variables
	  and assume there exists a constant $\eta\in\Rz_{\geq1}$, such that $\eta\geq(\E[Z_1^{2\beta}])^{1/2}$ and
	$\eta\geq\E[Z_1^{2\beta}|\log(Z_1)|^\gamma]$ for some
	$\beta\in\Rz$ and $\gamma\in\pRz$. Define the normalised
	Mellin function process by
	\begin{equation}\label{lem:8maxi:e1}
		c_m(t):=\frac{1}{\sqrt{m}}\sum_{j\in\nset{m}}\left\{Z_{j}^{\beta+\iota2\pi
			t}-\IE\left[Z_j^{\beta+\iota2\pi
			t}\right]\right\},\quad\forall t\in\Rz,
	\end{equation}
	and for $\rho\in\pRz$ the density function  $\rmwbar:\IR\to(0,1]$ by 
	\begin{equation}\label{lem:8maxi:e2}
		\rmwbar(t):=\left(\log(e+|t|)\right)^{-\frac{1}{2}-\rho},\quad\forall t\in\Rz.
	\end{equation}
	Then there exists a constant $\Const(\eta,\rho)\in\Rz_{\geq1}$ only
	depending on $\eta$ and $\rho$, such that
	\begin{equation*}
		\sup_{m\in\IN}\left\{\IE\left[\norm{c_m}_{\IL^\infty(\rmwbar)}\right]\right\}\leq \Const(\eta,\rho).
	\end{equation*}
\end{pr}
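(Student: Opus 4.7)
The plan is to adapt the argument of Theorem~4.1 in \cite{NeumannReiss2009}, which establishes the analogous inequality for the normalised characteristic function process, to the Mellin setting. First I would reduce to the additive setting by setting $W_j:=\log Z_j$, which recasts
\begin{equation*}
c_m(t)=\frac{1}{\sqrt m}\sum_{j\in\nset{m}}\bigl\{e^{\beta W_j}e^{\iota 2\pi t W_j}-\IE[e^{\beta W_j}e^{\iota 2\pi t W_j}]\bigr\}
\end{equation*}
as a \emph{weighted} empirical characteristic function process of the $W_j$'s. The hypotheses translate into $\IE[e^{2\beta W_1}]\leq\eta^2$ and $\IE[e^{2\beta W_1}|W_1|^\gamma]\leq\eta$. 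I would then partition the frequency axis dyadically via $J_0:=[-1,1]$ and $J_n^{\pm}:=\pm(2^{n-1},2^n]$ for $n\geq 1$; on each band $\sup_{t\in J_n^{\pm}}\rmwbar(t)\leq\Const(\rho)(1+n)^{-1/2-\rho}$. It then suffices to obtain, uniformly in $m\in\IN$, a per-band estimate
\begin{equation*}
\IE\Bigl[\sup_{t\in J_n^{\pm}}|c_m(t)|\Bigr]\leq\Const(\eta)\,h(n),
\end{equation*}
with $h$ slow-growing enough that $\sum_{n\geq 0}(1+n)^{-1/2-\rho}h(n)<\infty$; the asserted bound then follows by a Fubini/summation step.

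The per-band estimate is the heart of the proof. I would run a chaining argument on each $J_n^{\pm}$ based on two ingredients: (a) a pointwise Bernstein-type concentration for $c_m(t)$ starting from the variance bound $\Var(e^{\beta W_1}e^{\iota 2\pi tW_1})\leq\eta^2$ combined with a truncation of the heavy-tailed summands $e^{\beta W_j}$, whose tail is controlled by the higher-order moment assumptions in \cref{ass:B2} via Markov; and (b) the Lipschitz-type estimate
\begin{equation*}
|c_m(t)-c_m(s)|\leq\frac{2\pi|t-s|}{\sqrt m}\sum_{j\in\nset{m}}\bigl(e^{\beta W_j}|W_j|+\IE[e^{\beta W_1}|W_1|]\bigr),
\end{equation*}
whose first moment is of order $\sqrt m\,\IE[e^{\beta W_1}|W_1|]$, finite by the $|\log|^\gamma$-moment assumption (using a Hölder step if $\gamma<1$). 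Discretising $J_n^{\pm}$ at a scale $\delta_n$ that balances the subgaussian grid-maximum $\Const(\eta)\sqrt{\log(|J_n^{\pm}|/\delta_n)}$ against the off-grid Lipschitz correction, and invoking Talagrand's functional Bernstein inequality already used in \cref{lem: Talagrand}, yields the desired per-band control.

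The principal obstacle is the $m$-dependence of the pathwise Lipschitz constant of $c_m$, which grows like $\sqrt m$ and would naively force the bound on each band to deteriorate with $m$. As in \cite{NeumannReiss2009}, I would address this by truncating the summands at each chaining scale and absorbing the residual mass through the moment hypotheses of \cref{ass:B2}, so that the grid-maximum and Lipschitz-correction bounds each match only in the $L^1$-sense rather than pathwise, and the final per-band bound becomes uniform in $m$. Summing the weighted contributions $\sum_n(1+n)^{-1/2-\rho}h(n)$ then converges for every $\rho>0$ (with a constant that degrades as $\rho\downarrow0$), which delivers the claimed $\Const(\eta,\rho)$.
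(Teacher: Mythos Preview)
Your dyadic-band strategy has a genuine gap for $\rho\in(0,1]$. Once you pull the weight out via $\sup_{t\in J_n^{\pm}}\rmwbar(t)\leq\Const(\rho)(1+n)^{-1/2-\rho}$ and then bound the \emph{unweighted} process $\sup_{t\in J_n^{\pm}}|c_m(t)|$, the per-band cost $h(n)$ cannot be made $O(1)$. Any chaining or grid-maximum argument on an interval of length $|J_n^{\pm}|\asymp 2^n$ produces a factor of order $\sqrt{\log(|J_n^{\pm}|/\delta_n)}\gtrsim\sqrt{n}$, because the net has at least $\asymp 2^n$ points regardless of how you truncate the summands (truncation only helps with the $m$-dependence, not with the band length). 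Hence your summation step reads $\sum_{n\geq0}(1+n)^{-1/2-\rho}\sqrt{n}\asymp\sum_{n\geq1}n^{-\rho}$, which diverges for every $\rho\leq 1$. The separation of weight and process is precisely what loses the information needed to cover the full range $\rho\in\pRz$.

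The paper's argument avoids this by keeping $\rmwbar$ inside the function class and computing the bracketing entropy of $\mathscr{G}_\beta=\{z\mapsto\rmwbar(t)z^\beta\cos(2\pi t\log z):t\in\Rz\}\cup\{\text{sine}\}$ directly, then invoking Lemma~19.34 and Corollary~19.35 of \cite{Vaart1998}. The key gain is that once $|t|\geq T_\varepsilon:=\inf\{t>0:\rmwbar(t)\leq\varepsilon/2\}$, all those functions are uniformly $\varepsilon$-small and fit in a single bracket; only $|t|\leq T_\varepsilon$ needs a genuine grid, and combining this with a Yukich-type truncation at $|\log z|\leq B_\varepsilon$ (controlled by the $|\log|^\gamma$-moment) yields $\log N_{[\cdot]}(\varepsilon,\mathscr{G}_\beta)=O(\varepsilon^{-\kappa})$ with $\kappa=1/(\rho+1/2)<2$. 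The entropy integral then converges for every $\rho>0$, which is exactly what your band-by-band accounting cannot recover. (A minor side remark: you invoke moment hypotheses from \cref{ass:B2}, but the proposition is stated for generic $Z_j$ with only the two moment bounds $\eta\geq(\IE[Z_1^{2\beta}])^{1/2}$ and $\eta\geq\IE[Z_1^{2\beta}|\log Z_1|^\gamma]$; no further moments are available here.)
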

Now, we are in the position  to state an upper bound  of the probability of
the event $\mho_{\ko}^C$, which is proven in the \cref{app: 4}.  
\begin{pr}\label{bound:event}
        Let
        \cref{ass:B2} be satisfied and let   $\rho\in\pRz$ be
	arbitrary but fixed.  Consider  the density function
	$\rmwbar:\IR\to(0,1]$ given in \eqref{lem:8maxi:e2} and let
	$\Const(\eta_U,\rho)\in\Rz_{\geq1}$ be given by \cref{lem:8maxi}
	(with $\beta=c-1$). Given the universal numerical constant
	$\Const_{\mathrm{tal}}\in\IR_{>0}$ determined by Talagrand's
	inequality in \cref{lem:Talagrand:2} we set 
	\begin{align}\nonumber
		&\tau_m:=\tau_m(\eta_U,\rho):= 2\eta_U\Const_{\mathrm{tal}}^{-1/2}(\log
		m)^{1/2}+2\Const(\eta_U,\rho)\quad\forall m\in\Nz\quad{and}\\\nonumber
		&m_{\circ}:=m_{\circ}(\MelU,\eta_U,\rho)\\\label{bound:event:mo}&:=\min\left\{m\in\Nz_{\geq3}:\,\inf_{t\in[-1,1]}\rmwbar(t)|\MelU(t)|\geq
		6\tau_m m^{-1/2} \wedge \Const_{\mathrm{tal}} \eta_U^2m\geq\log
		m\right\}.
	\end{align}
	For $m\in\nset{m_{\circ}}$ let  $k_m\in\pRz$ be arbitrary
        while   for $m\in\Nz_{\geq m_{\circ}}$ we set
	\begin{equation}\label{bound:event:km}
		k_m:=\sup\left\{k\in\Nz:\,\inf_{t\in[-k,k]}\rmwbar(t)|\MelU(t)|\geq
		6\tau_m m^{-1/2}\right\}, 
	\end{equation}
	where the defining set is not empty for all $m\in\Nz_{\geq
		m_{\circ}}$. For any $m\in\Nz$ consider the event
	\begin{equation*}
		\mho_{k_m}^C:=\left\{\exists t\in[-k_m,k_m]:\,\left|\MelUhat(t)-\MelU(t)\right|>\frac{1}{3}|\MelU(t)|\right\}.
	\end{equation*}
	then there is an universal numerical constant
	$\Const:=3+11\Const_{\mathrm{tal}}^{-3}\in\IR_{\geq 1}$ such that we have    
	\begin{equation*}
		\Pz(\mho_{k_m}^C)\leq (m_{\circ}^2\vee
                \Const\eta_U)\,m^{-2},\quad\forall m\in\Nz.
	\end{equation*}
\end{pr}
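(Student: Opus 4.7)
The plan is to split on the size of $m$ relative to $m_\circ$, and in the large-$m$ regime to reduce the event $\mho_{k_m}^C$ to a tail event of the normalised Mellin function process $c_m$ supplied by \cref{lem:8maxi}, which is then controlled by Talagrand's inequality in the form of \cref{lem:Talagrand:2}.

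For $m\in\nset{m_\circ}$, the trivial bound $\Pz(\mho_{k_m}^C)\leq 1\leq m_\circ^2 m^{-2}$ already yields the claim, so it remains to treat $m\geq m_\circ$. First I would observe the identity $\MelUhat(t)-\MelU(t)=m^{-1/2} c_m(t)$, with $c_m$ as in \cref{lem:8maxi} for $Z_j=U_j$ and $\beta=c-1$. By the defining property \eqref{bound:event:km} of $k_m$, $\rmwbar(t)|\MelU(t)|\geq 6\tau_m m^{-1/2}$ for every $t\in[-k_m,k_m]$, so the pointwise inequality $|\MelUhat(t)-\MelU(t)|>|\MelU(t)|/3$ forces $\rmwbar(t)|c_m(t)|>2\tau_m$, and consequently
\begin{equation*}
\mho_{k_m}^C \subseteq \bigl\{\sup_{t\in[-k_m,k_m]} \rmwbar(t)|c_m(t)|>2\tau_m\bigr\}\subseteq \{\norm{c_m}_{\IL^\infty(\rmwbar)}>2\tau_m\}.
\end{equation*}
This decouples the hard-to-control quotient $|\MelU|^{-1}$ from the randomness through the log-weight $\rmwbar$.

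The key step is then Talagrand's inequality applied to the centred empirical process $t\mapsto \rmwbar(t)\,m^{-1}\sum_{j\in\nset{m}}\{U_j^{c-1+\iota 2\pi t}-\MelU(t)\}$ indexed by $t\in[-k_m,k_m]$, treated separately in its real and imaginary parts. \cref{lem:8maxi} supplies the expected supremum bound $\IE[\norm{c_m}_{\IL^\infty(\rmwbar)}]\leq \Const(\eta_U,\rho)$, which by construction of $\tau_m$ is at most $\tau_m/2$; hence $\Pz(\norm{c_m}_{\IL^\infty(\rmwbar)}>2\tau_m)\leq \Pz(\norm{c_m}_{\IL^\infty(\rmwbar)}-\IE[\norm{c_m}_{\IL^\infty(\rmwbar)}]>\tfrac{3}{2}\tau_m)$. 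Since $\rmwbar\leq 1$, the wimpy variance is bounded by $\IE[U_1^{2(c-1)}]\leq\eta_U^2$, and the sub-Gaussian regime of Talagrand then produces a factor of order $\exp(-\Const_{\mathrm{tal}} m\tau_m^2/\eta_U^2)$; the choice $\tau_m\geq 2\eta_U\Const_{\mathrm{tal}}^{-1/2}(\log m)^{1/2}$ makes this at most $m^{-2}$.

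The hard part will be the sub-exponential term accompanying the sub-Gaussian term in Talagrand's bound, which requires a uniform envelope that is not available for the indexing class $\{u\mapsto \rmwbar(t) u^{c-1+\iota 2\pi t}:t\in[-k_m,k_m]\}$. I would deal with this by truncating $U_j$ at a level of logarithmic order and invoking the moment hypothesis $\eta_U\geq \norm{f^U}_{\IL^1_+(\rmx^{2(c-1)}|\log\rmx|^\gamma)}$ both to control the truncation bias and to bound the residual sub-exponential tail; the logarithmic moment is exactly tailored to turn this contribution into an additive $\Const_{\mathrm{tal}}^{-3}\eta_U m^{-2}$ term, which is then absorbed into the stated constant $\Const=3+11\Const_{\mathrm{tal}}^{-3}$. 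Collating both regimes yields the claimed bound $\Pz(\mho_{k_m}^C)\leq (m_\circ^2\vee\Const\eta_U)m^{-2}$.
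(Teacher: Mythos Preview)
Your overall strategy---trivial bound for $m<m_\circ$, then for $m\geq m_\circ$ reduce $\mho_{k_m}^C$ to a tail of the weighted process $\rmwbar\cdot c_m$, feed \cref{lem:8maxi} into Talagrand's inequality, and truncate $U_j$ to obtain a finite envelope---is exactly the paper's. The gap is in the truncation step, where both the level and the controlling moment are mis-specified.

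The truncation level is not logarithmic but of order $m^{1/2}$: the paper takes $b = m^{1/2}(\log m)^{-1/2}\eta_U\Const_{\mathrm{tal}}^{1/2}$. This is forced by the requirement that the bias of the unbounded remainder stay below the threshold: one has $|\IE[\MelUhat^{\mathrm{ub}}(t)]|\leq 2b^{-1}\IE[U_1^{2(c-1)}]$, and this must be at most a fraction of $\tfrac{1}{6}\inf_{[-k_m,k_m]}\rmwbar|\MelU|\geq\tau_m m^{-1/2}$, which requires $b\gtrsim m^{1/2}(\log m)^{-1/2}$. A logarithmic $b$ leaves this bias far too large. With the paper's choice, the sub-Gaussian and sub-exponential arguments in \cref{lem:Talagrand:2} both equal $2\log m$, yielding the $3m^{-2}$ contribution from the bounded part.

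The probability that the unbounded part exceeds its share of the threshold is then handled not by the logarithmic moment but by the high-order moment $\IE[U_1^{7(c-1)}]\leq\eta_U^7$ in \cref{ass:B2}: after a Markov bound the paper uses $\IE[U_1^{c-1}\1_{(b,\infty)}(U_1^{c-1})]\leq b^{-6}\IE[U_1^{7(c-1)}]$, and it is the combination $m^{1/2}b^{-6}\eta_U^7$ with $b= m^{1/2}(\log m)^{-1/2}\eta_U\Const_{\mathrm{tal}}^{1/2}$ that produces the $11\Const_{\mathrm{tal}}^{-3}\eta_U m^{-2}$ you identified in the constant. The logarithmic moment $\norm{f^U}_{\IL^1_+(\rmx^{2(c-1)}|\log\rmx|^\gamma)}$ plays no role at this stage; it is consumed entirely within the bracketing-entropy argument inside the proof of \cref{lem:8maxi}. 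With only that moment the tail $\IE[U_1^{c-1}\1_{(b,\infty)}(U_1^{c-1})]$ decays merely as a power of $\log b$, which cannot deliver the $m^{-2}$ rate.
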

We can now formulate our main result.
\pagebreak
\begin{thm}[Main result]\label{dd:ub} Let
        \cref{ass:B2} be satisfied and let   $\rho\in\pRz$ be
	arbitrary but fixed. Consider $m_{\circ}$ as in
        \eqref{bound:event:mo} and for each $m,n\in\Nz$,  $k_n$ as in
        \eqref{def:kn} and $k_m$ as in
        \eqref{bound:event:km}. Then for all  $m,n\in\Nz$ we have
  \begin{align*}
    \IE\left[\norm{\MelXhat^{\hat{k}}-\MelX}^2_{\IL^2(\rmv)}\right]&
    \leq \Const\cdot(\eta_U^4\eta_X+\sigma_Y^2)
    \min_{k\in\nset{k_n}}\left\{
      \norm{\mathds{1}_{[-k,k]^C}\MelX}_{\IL^2(\mathrm{v})}^2 +
      \Delta_k^{\rmvU}\delta_k^{\rmvU} k n^{-1}\right\} 
    \\
  &\phantom{=}+\Const\cdot\eta_U^4\norm{\MelX\big(1\vee|\MelU|^2m\big)^{-1/2}}_{\IL^2(\rmv)}^2+
  \Const\cdot\eta_Y(1\vee\eta_U^2k_Y\Delta_{k_Y}^{\rmv})\cdot
  n^{-1} \\
  &\phantom{=}+15\cdot\norm{\mathds{1}_{[-k_m,k_m]^C}\MelX}_{\IL^2(\mathrm{v})}^2+\Const\cdot(\sigma_Y^2\Delta_{1}^{\rmv}+\eta_X)(m_{\circ}^2\vee \eta_U)\,m^{-1}.
\end{align*}
\end{thm}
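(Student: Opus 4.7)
The plan is to combine the data-driven risk bound of \cref{co:riskboundadaptive} with the deviation estimate of \cref{bound:event} via a judicious choice of the oracle index $\ko\in\nset{k_n}$.

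First I would apply \cref{co:riskboundadaptive} at $\ko:=k_m\wedge k_n$; this lies in $\nset{k_n}$ in every regime, since in the subcritical case $m<m_\circ$ one may exploit the freedom in the definition of $k_m$ granted by \cref{bound:event} to set $k_m:=k_n$. The first three summands produced by the lemma coincide verbatim with the first three summands of the statement of \cref{dd:ub}, so the task reduces to controlling
\[15\norm{\mathds{1}_{[-\ko,\ko]^C}\MelX}_{\IL^2(\rmv)}^2 + 6(\sigma_Y^2\Delta_1^{\rmv}+\eta_X)\,\Pz(\mho_\ko^C).\]

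For the probability factor I would use that $\mho_k$ is monotone non-increasing in $k$ (being defined through a supremum over $[-k,k]$), so $\ko\leq k_m$ gives $\mho_{k_m}\subseteq\mho_\ko$ and hence $\Pz(\mho_\ko^C)\leq\Pz(\mho_{k_m}^C)$. \cref{bound:event} then yields
\[\Pz(\mho_\ko^C)\leq(m_\circ^2\vee\Const\,\eta_U)\,m^{-2}\leq(m_\circ^2\vee\Const\,\eta_U)\,m^{-1},\]
and multiplying by the prefactor $6(\sigma_Y^2\Delta_1^{\rmv}+\eta_X)$ reproduces the last summand of the statement.

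For the bias factor, $\ko=k_m$ whenever $k_m\leq k_n$, in which case $\norm{\mathds{1}_{[-\ko,\ko]^C}\MelX}^2_{\IL^2(\rmv)}$ already matches the fourth summand of the statement. In the opposite regime $k_m>k_n$ one has $\ko=k_n$ and the bias at $k_n$ dominates that at $k_m$; this excess is absorbed into the first (oracle) summand, using that $k=k_n$ is a competitor in the minimisation over $\nset{k_n}$ and inflating the numerical constant accordingly. Collecting the pieces and relabelling constants yields the announced bound.

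The main obstacle is this last piece of bookkeeping in the regime $k_m>k_n$, where the tail estimate of \cref{bound:event}, meaningful only on $[-k_m,k_m]$, and the model collection, which stops at $k_n$, must be reconciled; everything else is a mechanical assembly of the preceding lemmas together with the monotonicity of $k\mapsto\mho_k$.
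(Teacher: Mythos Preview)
Your proposal is correct and follows essentially the same route as the paper: apply \cref{co:riskboundadaptive} at $\ko=k_m\wedge k_n$, use the monotonicity $\Pz(\mho_{\ko}^C)\le\Pz(\mho_{k_m}^C)$ together with \cref{bound:event}, and absorb the bias at $\ko$ into the oracle minimum plus the $k_m$-tail. The paper compresses your two-case analysis of the bias into the single inequality
\[
\norm{\mathds{1}_{[-\ko,\ko]^C}\MelX}_{\IL^2(\rmv)}^2\le\min_{k\in\nset{k_n}}\Big\{\norm{\mathds{1}_{[-k,k]^C}\MelX}_{\IL^2(\rmv)}^2+\Delta_k^{\rmvU}\delta_k^{\rmvU}kn^{-1}\Big\}+\norm{\mathds{1}_{[-k_m,k_m]^C}\MelX}_{\IL^2(\rmv)}^2,
\]
which is nothing other than your case distinction written uniformly.
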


\begin{proof}[\cref{dd:ub}]
  By combining \cref{co:riskboundadaptive} with $k_{\circ}:=k_n\wedge
  k_m$, the elementary bound
  \begin{equation}
    \norm{\mathds{1}_{[-\ko,\ko]^C}\MelX}_{\IL^2(\mathrm{v})}^2\leq
    \min_{k\in\nset{k_n}}\left\{
      \norm{\mathds{1}_{[-k,k]^C}\MelX}_{\IL^2(\mathrm{v})}^2 +
      \Delta_k^{\rmvU}\delta_k^{\rmvU} k n^{-1}\right\} +\norm{\mathds{1}_{[-k_m,k_m]^C}\MelX}_{\IL^2(\mathrm{v})}^2
  \end{equation}
  and $\Pz(\mho_{\ko}^C)\leq \Pz(\mho_{k_m}^C)\leq (m_{\circ}^2\vee \Const\eta_U)\,m^{-2}$
  we immediately obtain the claim, which completes the proof.
\end{proof}

\begin{rem}Let us briefly compare the upper bound for the risk of the 
  data-driven estimator with known and unknown error distribution
  given in \eqref{eq:oracleadaptive2} and in \cref{dd:ub},
  respectively. Up to the constants estimating the error distribution
  leads to the three additional terms
  \begin{equation}\label{rem:ub:addterms}
    \norm{\MelX\big(1\vee|\MelU|^2m\big)^{-1/2}}_{\IL^2(\rmv)}^2,\,    m^{-1},\text{ and }\norm{\mathds{1}_{[-k_m,k_m]^C}\MelX}_{\IL^2(\mathrm{v})}^2.
  \end{equation}
  Evidently,
  they depend all on the sample size $m$ only, and the second is
  negligible with respect to the first term. Moreover, the
  first term in \eqref{rem:ub:addterms} is already present in the upper risk bound
  \eqref{eq:oracle3} of the oracle estimator when estimating the error
  density too. Thus the third term in \eqref{rem:ub:addterms}
  characterises the prize we pay for selecting the dimension parameter
  fully data-driven while estimating the error density. Comparing the first
  and third term  in \eqref{rem:ub:addterms} it is not obvious to
  us which one is the leading term.  If there exists a constant
  $\Const\in\Rz_{>1}$ such that
  \begin{equation}\label{cond:MelU}
    [-k_m,k_m]^C\subset\{\rmwbar^2|\MelU|^2m(\log
    m)^{-1}<\Const\},\quad\forall m\in\Nz,
  \end{equation}
  then both, the first and third term in \eqref{rem:ub:addterms}, are bounded up to the constant $\Const$ by
  \begin{equation}\label{rem:ub:dd}\norm{\MelX\big(1\vee\rmwbar^2|\MelU|^2m/\log
      m\big)^{-1/2}}_{\IL^2(\rmv)}^2.
  \end{equation}
  Note that the additional
  condition \eqref{cond:MelU} is satisfied whenever $|\MelU|^2$ is monotonically
  decreasing. However, the term in \eqref{rem:ub:dd} might over
  estimated both, the first and third term in \eqref{rem:ub:addterms}.
  Take for example the situation $\mathfrak{o.s.}$ in
  \cref{tab:table4} below. In case $(s-a)>\gamma$ all three terms in \eqref{rem:ub:addterms} are
  of order $m^{-1}$ while the term in \eqref{rem:ub:dd} is of order
  $m^{-1}(\log m)^{2(1+\rho)}$. In contrast in the situation
  $\mathfrak{s.s.}$ the first and third term in
  \eqref{rem:ub:addterms} as well as the term in \eqref{rem:ub:dd} are of order $(\log m)^{-\frac{s-a}{\gamma}}$.
\end{rem}
Continuing the brief discussion in \cref{subsec:3.3} we assume an
unknown density $f\in\IW^s(L)$, where the regularity $s$ is specified
below. Regarding the Mellin transformation $\MelU$, we subsequently
assume again its \textit{ordinary smoothness} \eqref{eq:os} or
\textit{super smoothness} \eqref{eq:es}. In order to discuss the
convergences rates for the $\IL^2(\rmv)$-risk under these regularity
assumptions, we restrict ourselves again to the choice
$\rmv:=\rmt_c^a$ for $a\in\IR$, observing that $a = 0$ corresponds to
the global risk for estimating the density $f$ and $a=1$ corresponds
to estimating the survival function $\SX$ of $X$ as discussed
before. In \cref{tab:table4} below we state for the specifications
considered in \cref{tab:table1} the order of the maximal oracle risk again as well as  the upper
risk bound in \cref{dd:ub} as
$n,m\to\infty$, which follow immediately from elementary
computations. 
  \begin{table}[H]
  	\centering
    \begin{tabular}{||@{\hspace*{2pt}}c@{\hspace*{2pt}}c@{\hspace*{2pt}}c@{\hspace*{2pt}}c@{\hspace*{2pt}}||}
      \toprule
      &$\MelU$ & Maximal oracle
                              risk & Data-driven risk \\ [0.5ex] 
      \midrule
      \midrule
      \scriptsize$a\in(-\tfrac{1}{2}-\gamma,s)$&
                             \scriptsize$\mathfrak{o.s.}$
                &$n^{-\frac{2(s-a)}{2\gamma+2s+1}}+m^{-(\frac{s-a}{\gamma}\land 1)}$&$n^{-\frac{2(s-a)}{2\gamma+2s+1}}+\big(\tfrac{(\log m)^{2(1+\rho)}}{m}\big)^{(\frac{s-a}{\gamma})}\vee m^{-1}$\\ 
      \midrule
      \scriptsize$s>a$& \scriptsize$\mathfrak{s.s.}$   & $(\log n)^{-\frac{s-a}{\gamma}}+(\log m)^{-\frac{s-a}{\gamma}}$& $(\log n)^{-\frac{s-a}{\gamma}}+(\log m)^{-\frac{s-a}{\gamma}}$\\ [1ex] 
      \bottomrule
    \end{tabular}
    \caption{}\label{tab:table4}
  \end{table}
We note that in case $\mathfrak{s.s.}$ for all $s>a$ and in case
$\mathfrak{o.s.}$ for $s-a>\gamma$ the maximal oracle risk and the
data-driven risk coincide. In other words we do not pay an additional prize for
the data-driven selection of the dimension parameter. In case
$\mathfrak{s.s.}$ for $s-a\leq\gamma$ the rates differ, but the data
driven rate features only a deterioration by  a factor $(\log m)^{2(1+\rho)(a-s)/\gamma}$.

	\section{Numerical study}\label{sec: 5}
In this section we are going to illustrate the performance of the data-driven estimation procedure as presented in section \ref{sec: 4}. Eventually, we want to highlight the behaviour of $\hat{f}_{\khat}$ under the influence of an increase of additional measurements for different types of unknown probability functions $f$ of $X$. Particularly, we are interested in four different densities $f$ of $X$, we aim to estimate, namely
\begin{enumerate}
	\item[i)] Gamma - distribution, $\Gamma(q,p)$:
		\begin{equation*}
			f_1(x)=
				\frac{\displaystyle q^p}{\displaystyle\Gamma(p)}x^{p-1}\exp(-qx)\1_{\pRz}(x)
		\end{equation*}
		with $q=1$ and $p=3$.
	\item[ii)] Weibull - distribution, $\operatorname{Weib}(l,k)$:
		\begin{equation*}
			f_2(x) =  s\cdot k \cdot (s \cdot x)^{k-1} \exp(-(s \cdot x)^k) \1_{\pRz}(x)
		\end{equation*}
	with $s= 1 $ and $k = 3 $.
	\item[iii)] Beta - distribution, $\operatorname{Beta}(a,b)$:
		\begin{equation*}
			f_3(x) = \frac{1}{\operatorname{B}(a,b)} x^{a-1}(1-x)^{b-1}\1_{(0,1)}(x)
	\end{equation*}
	with $a =10$ and $b = 5$.
	\item[iv)] Log-Normal - distribution, $\operatorname{LogN}(\mu,\sigma^2)$:
		\begin{equation*}
		f_4(x)= \frac{1}{\sqrt{2\pi}\sigma x }\,\exp\Big( -\frac{(\ln(x)-\mu)^2}{2\sigma^2}\Big)\1_{\pRz}(x)
	\end{equation*}
	with $ \mu = 0$ and $\sigma^2 = 1$.
\end{enumerate}
Moreover, as the unknown error distribution of $U$, we consider a Pareto distribution, $\operatorname{Pareto}(l,x_{\min})$,
\begin{equation*}
	f^U(x)=	\frac{l x_{\min}^l}{x^{l+1}}\1_{[x_{\min},\infty)}(x)
\end{equation*}
with $l=1$ and $x_{\min} = 1$. Elementary computations show that this
choice of error distribution actually satisfies the ordinary
smoothness conditions \eqref{eq:os} with $\gamma = 1$. In a first
place we document how an increasing number of additional measurements
$m$ has an impact on the statistical behaviour of
$\hat{f}_{\khat}$. To do so, we consider first $f_1$ as target density
and generate a sample $\{Y_i\}_{i\in\nset{1000}}$ following the law of
$Y = X\cdot U$ with independent $X\sim\Gamma(1,3)$ and
$U\sim\operatorname{Pareto}(1,1)$.  Moreover, we have sampled
$m \in\{100,1000,4000\}$ additional observations of
$U\sim\operatorname{Pareto}(1,1)$. With those samples we have computed
the data driven choice $\khat$ and afterwards $\hat{f}_{\khat}$
according to the estimation strategies presented in section \ref{sec:
  3} and section \ref{sec: 4}, where we have chosen $c=\frac{1}{2}$
and $0.3$ as constant in the penalty. As note by several authors (see
for instance \cite{ComteRozenholcTaupin2006}) the constant $48$ in
\eqref{eq:penvhat}, though convenient for deriving the theory, is far
too large in practise. In order to capture the randomness, we have
repeated this procedure for $N=500$ Monte-Carlo iterations, meaning we
have computed a family of data driven spectral cut-off density
estimators $\{\hat{f}^j_{\khat_j}\}_{j\in\nset{N}}$. For a direct
comparison to the situation of knowing the error density $f^U$, we
have also computed a family $\{\tilde{f}^j_{\khat_j}\}_{j\in\nset{N}}$
of spectral cut-off density estimators of $f_1$ as presented in
section \ref{sec: 2} with $c=\frac{1}{2}$ and $0.6$ as constant in the
penalty. The true density $f_1$, the family of estimators, as well as
a point-wise computed median are depict in \cref{fig:plot1} -
\cref{fig:plot4}, where also the corresponding empirical mean
integrated squared errors ($\operatorname{eMISE}$) are stated. As the
theory indicates, the estimation becomes more accurate for an
increasing sample size $m=100$ to $1000$, while for $m = 1000$ to
$m = 4000$ there is no significant improvement. And the accuracy
corresponds nearly to the case of a known error density.  Secondly, we
illustrate the behaviour of $\hat{f}_{\khat}$ for a fixed number of
samples $n$ and $m$, but for different target densities of $X$, namely
for $f_1,\dots,f_4$. We fix $n=m=2000$ and sample $n$ observations of
$Y^{i}$, $i\in\nset{4}$ according to the relation $Y ^i = X^i\cdot U$,
where $X^{i}$ follows the law of $f_i$, $i\in\nset{4}$ and $U$ is
still $\operatorname{Pareto}(1,1)$ distributed. Again, given $m$
additional measurements as i.i.d copies of $U$ we compute afterwards
$\hat{k}$ as well as $\hat{f}_{\khat}$ as before, following the
definitions in section \ref{sec: 3} and section \ref{sec: 4} with
choosing again $c = \frac{1}{2}$ and constant $0.3$ for the
penalty. Repeating this procedure for $N:=500$ Monte-Carlo iterations
we obtain four families of estimators
$\{\hat{f}^{i,j}_{\khat_{i,j}}\}_{i\in\nset{4},j\in\nset{N}}$. The
results as well as the empirical mean integrated squared error can be
found in \cref{fig:plot5} - \cref{fig:plot8}.
\begin{figure}[H]
	\centering
	\begin{minipage}{.4\textwidth}
		\centering
		\includegraphics[width=0.8\linewidth]{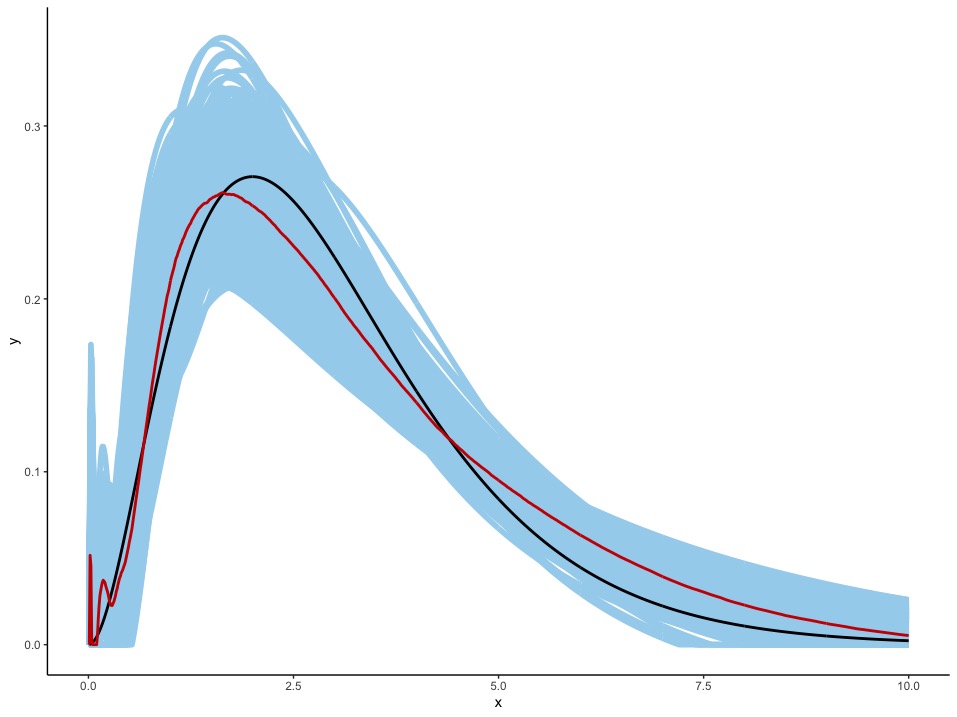}
		\caption{\phantom{.}Unknown error distribution, $n = 1000$, $m = 100$. Black line: true density $f_1$, Blue lines: $N$ Monte-Carlo estimations of $f_1$, Red Line: point-wise median. $\operatorname{eMISE} = 0.00575$.}
		\label{fig:plot1}
	\end{minipage}
	\hspace{0.5cm}
	\begin{minipage}{.4\textwidth}
		\centering
		\includegraphics[width=0.8\linewidth]{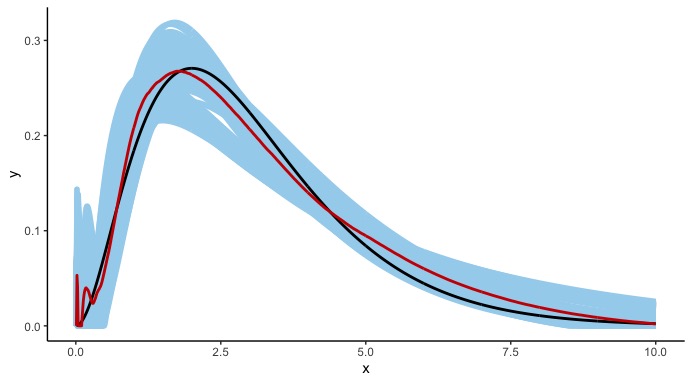}
		\caption{\phantom{.}Unknown error distribution, $n = 1000$, $m = 1000$. Black line: true density $f_1$, Blue lines: $N$ Monte-Carlo estimations of $f_1$, Red Line: point-wise median. $\operatorname{eMISE} = 0.00458$.}
		\label{fig:plot2}
	\end{minipage}
	\centering
	\begin{minipage}{.4\textwidth}
		\centering
		\includegraphics[width=0.8\linewidth]{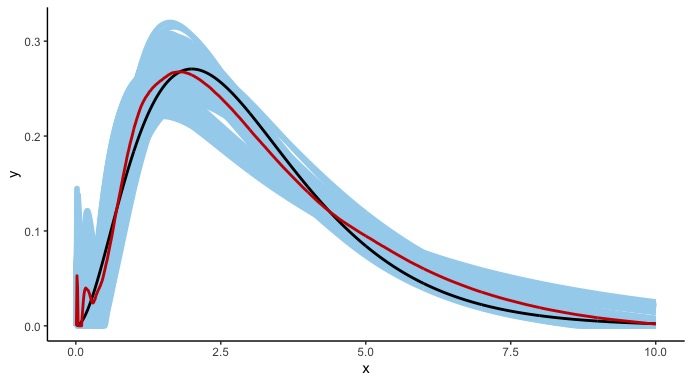}
		\caption{\phantom{.}Unknown error distribution, $n = 1000$, $m = 4000$. Black line: true density $f_1$, Blue lines: $N$ Monte-Carlo estimations of $f_1$, Red Line: Point-wise median. $\operatorname{eMISE} = 0.00449 $.}
		\label{fig:plot3}
	\end{minipage}
	\hspace{0.5cm}
	\begin{minipage}{.4\textwidth}
		\centering
		\includegraphics[width=0.8\linewidth]{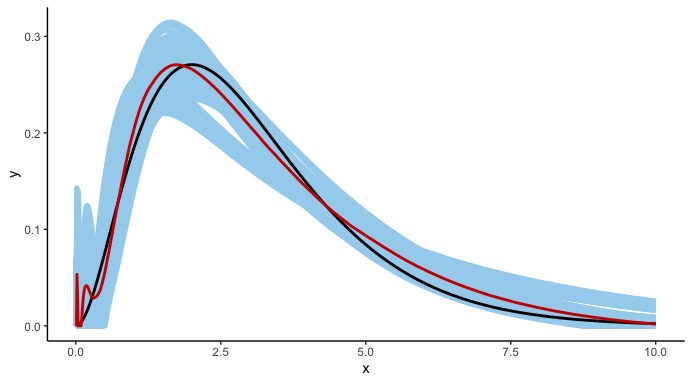}
		\caption{\phantom{.}Known error distribution, $n = 1000$. Black line: true  density $f_1$, Blue lines: $N$ Monte-Carlo estimations of $f_1$, Red Line: point-wise median. $\operatorname{eMISE} = 0.00299$.}
		\label{fig:plot4}
	\end{minipage}
	\centering
	\begin{minipage}{0.4\textwidth}
		\centering
		\includegraphics[width=0.8\linewidth]{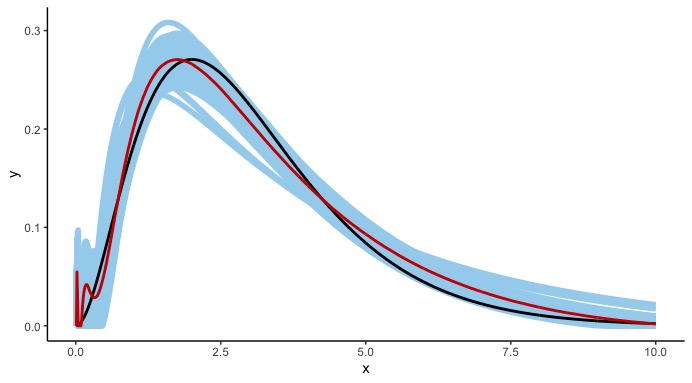}
		\caption{\phantom{.}Unknown error distribution, $n = m = 2000$. Black line: true density $f_1$, Blue lines: $N$ Monte-Carlo estimations of $f_1$, Red Line: point-wise median. $\operatorname{eMISE}= 0.00184 $.}
		\label{fig:plot5}
	\end{minipage}
	\hspace{0.5cm}
	\begin{minipage}{.4\textwidth}
		\centering
		\includegraphics[width=0.8\linewidth]{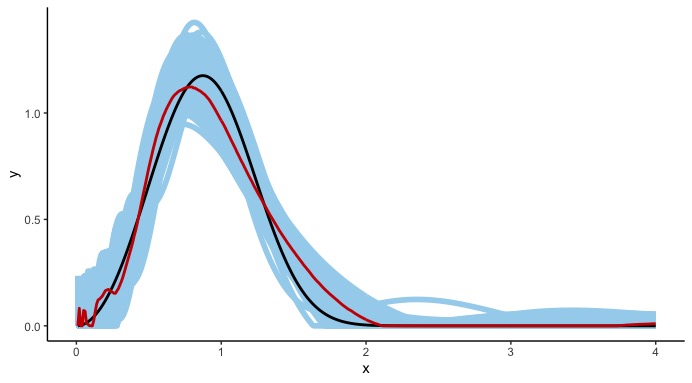}
		\caption{\phantom{.}Unknown error distribution, $n = m
                  = 2000$. Black line: true density $f_2$, Blue lines: $N$ Monte-Carlo estimations of $f_2$, Red Line: point-wise median. $\operatorname{eMISE}=0.0241  $.}
		\label{fig:plot6}
	\end{minipage}

	\centering
	\begin{minipage}{.4\textwidth}
		\centering
		\includegraphics[width=0.8\linewidth]{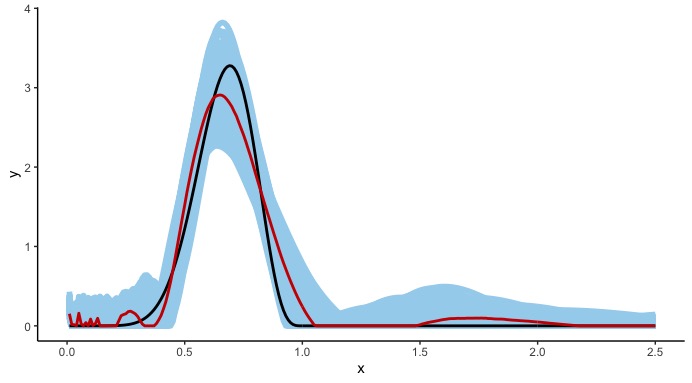}
		\caption{\phantom{.}Unknown error distribution, $n = m = 2000$. Black line: true density $f_3$, Blue lines: $N$ Monte-Carlo estimations of $f_3$, Red Line: point-wise median. $\operatorname{eMISE}=0.129$.}
		\label{fig:plot7}
	\end{minipage}
	\hspace{0.5cm}
	\begin{minipage}{.4\textwidth}
		\centering
		\includegraphics[width=0.8\linewidth]{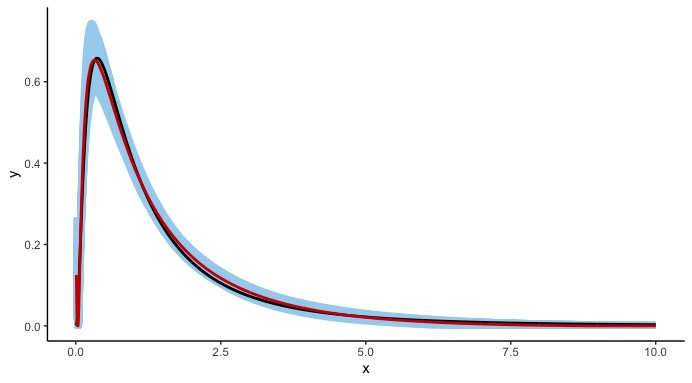}
		\caption{\phantom{.}Unknown error distribution, $n = m = 2000$. Black line: true density $f_4$, Blue lines: $N$ Monte-Carlo estimations of $f_4$, Red Line: point-wise median. $\operatorname{eMISE}= 0.00179$.}
		\label{fig:plot8}
	\end{minipage}
\end{figure}

		\pagebreak
		\Large{\scshape\centering{{Appendix}}}\\
		\normalsize{}
		
\section{Proofs of Section \ref{sec: 2}}\label{app: sec2}
The next assertion
provides our key argument in order to prove the concentration
inequality in \cref{re:conc}.  The  inequality is due to
\cite{Talagrand1996} and in this form for example given in \cite{KeinRio2005}.
\begin{lem}[Talagrand's inequality]\label{lem: Talagrand}  Let
	$(Z_i)_{i\in\nset{n}}$ be independent $\mathcal Z$-valued random
	variables and let $\{\nu_{h}:h\in\mathcal H\}$ be countable class of
	Borel-measurable functions. For $h\in\mathcal H$ setting
	$\overline{\nu}_{h}=n^{-1}\sum_{i\in\nset{n}}\left\{\nu_{h}(Z_i)-\E\left(\nu_{h}(Z_i)\right)
	\right\}$ we have
	\begin{align}
		\IE\left[\left(\sup_{h\in\mathcal H}\{|\overline{\nu}_h|^2\}-6\Psi^2\right)_+\right]\leq \Const_{\mathrm{tal}} \left[\frac{\tau}{n}\exp\left(\frac{-n\Psi^2}{6\tau}\right)+\frac{\psi^2}{n^2}\exp\left(\frac{-n \Psi}{100\psi}\right) \right]\label{tal:re1} 
	\end{align}
	for some universal numerical constants $\Const_{\mathrm{tal}}\in\IR_{>0}$  and where
	\begin{equation*}
		\sup_{h\in\mathcal H}\left\{\sup_{z\in\mathcal Z}\left\{|\nu_{h}(z)|\right\}\right\}\leq \psi,\qquad \E\left[\sup_{h\in \mathcal H}\{|\overline{\nu}_{h}|\}\right]\leq \Psi,\qquad \sup_{h\in \mathcal H}\left\{n\E\big[|\overline{\nu}_{h}|^2\big]\right\}\leq \tau.
	\end{equation*}
\end{lem}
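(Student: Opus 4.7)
The plan is to deduce the stated moment bound from a sharp Bernstein-form deviation inequality for $\sup_{h\in\mathcal H}|\overline{\nu}_h|$ by integrating its tails. This is the standard scheme used in the empirical-process literature (for instance in Massart's book or in Klein--Rio themselves), and it contains no real novelty beyond careful bookkeeping of constants.

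First, I would invoke the Bousquet--Klein--Rio sharpening of Talagrand's concentration inequality: under the hypotheses of the lemma, for every $t>0$ one has
\begin{equation*}
\Pz\!\left(\sup_{h\in\mathcal H}|\overline{\nu}_h|\geq \Psi+\sqrt{\tfrac{2\tau t}{n}}+\tfrac{c\,\psi t}{n}\right)\leq e^{-t}
\end{equation*}
for some absolute constant $c>0$. Passing from $\sup_{h}\overline{\nu}_h$ to $\sup_{h}|\overline{\nu}_h|$ is handled by enlarging the countable class $\mathcal H$ to $\mathcal H\cup(-\mathcal H)$, which preserves $\psi$ and $\tau$ and at most doubles $\Psi$, a factor absorbed in the numerical constants of the final statement.

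Next, I would apply the layer-cake identity
\begin{equation*}
\IE\!\left[\Bigl(\sup_{h\in\mathcal H}\{|\overline{\nu}_h|^2\}-6\Psi^2\Bigr)_+\right]=\int_0^\infty\Pz\Bigl(\sup_{h\in\mathcal H}|\overline{\nu}_h|>\sqrt{6\Psi^2+u}\Bigr)\,du,
\end{equation*}
and use the elementary bound $\sqrt{6\Psi^2+u}\geq \Psi+z(u)$ with a well-chosen $z(u)\geq(\sqrt 6-1)\Psi$, for example $z(u)=\sqrt{\Psi^2+u/2}$. Splitting the excess $z(u)$ between the sub-Gaussian term $\sqrt{2\tau t/n}$ and the sub-exponential term $c\psi t/n$ of the Bernstein bound defines $t=t(u)$ implicitly and converts the deviation bound into a tail bound in $u$.

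Finally, I would separate the integral into two regimes according to which Bernstein term dominates: where the sub-Gaussian term dominates one has $t\asymp n z^2/\tau$, producing a Gaussian tail $\exp(-c' n z^2/\tau)$; where the sub-exponential term dominates one has $t\asymp n z/\psi$, producing an exponential tail $\exp(-c' n z/\psi)$. Integrating each contribution in $u$ (equivalently in $z$) from the threshold $z_0=(\sqrt 6-1)\Psi$ yields, up to an absolute constant $\Const_{\mathrm{tal}}$, the two summands $\tfrac{\tau}{n}\exp\!\bigl(-\tfrac{n\Psi^2}{6\tau}\bigr)$ and $\tfrac{\psi^2}{n^2}\exp\!\bigl(-\tfrac{n\Psi}{100\psi}\bigr)$ of the claimed inequality.

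The main obstacle lies in tracking the precise numerical constants $6$, $6$ and $100$: they arise from the apportionment of the excess $z(u)$ between the two Bernstein regimes together with elementary inequalities such as $(a+b)^2\leq 2(a^2+b^2)$ and $(\sqrt 6-1)^2/2>1/6$. Making these constants explicit requires committing to a concrete split early in the argument; the overall strategy is otherwise routine, the only genuine input being the sharp Talagrand inequality used as a black box.
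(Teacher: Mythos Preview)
Your proposal is a correct and standard derivation of the moment form of Talagrand's inequality from its deviation form via the layer-cake formula and a split into sub-Gaussian and sub-exponential regimes. However, the paper does not prove this lemma at all: it is stated as a known result, attributed to \cite{Talagrand1996} and in this particular form to \cite{KeinRio2005}, and used as a black box throughout. So you are supplying strictly more than the paper does. If you wish to match the paper's treatment, a one-line citation suffices; if you wish to keep a self-contained proof, your sketch is sound, though you should be aware that pinning down the exact constants $6$, $6$, $100$ will require committing to a specific version of the Klein--Rio bound and a specific apportionment of the excess, and that the constants in the literature vary from source to source.
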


\begin{rem}\label{rem:conc} Consider an arbitrary  density function $\rmw:\IR\to\IR_{\geq0}$  such that
	$\1_{[-k,k]}\in\IL^\infty(\rmw)$ for each $k\in\IR_{>0}$. For $k\in \IR_{>0}$ let us briefly reconsider
	the orthogonal projection
	$(\MelYhat-\MelY)\mathds{1}_{[-k,k]}$. Introduce the subset
	$\IS_k:=\{h\mathds{1}_{[-k,k]}:h\in \IL^2(\rmw)\}$ of $\IL^2(\rmw)$
	and 
	the  unit
	ball $\IB_{k}:=\{h\in\IS_k: \norm{h}_{\IL^2(\rmw)}\leq1\}$ in
	$\IS_k$. Clearly, setting  $\kappa:\IR_{>0}\times\IR\to\IC$ with $(y,t)\mapsto
	\kappa(y,t):=y^{c-1}y^{\iota2\pi t}$ we have
	$\MelYhat(t)=\tfrac{1}{n}\sum_{i\in\nset{n}}\kappa(Y_i,t)$ and
	$\MelY(t)=\E\big[\kappa(Y_i,t)\big]$ for each $t\in\IR$. Introducing $\kappa_y:\IR\to\IC$ with
	$t\mapsto\kappa_y(t):=\kappa(y,t)$, $y\in\IR_{>0}$,  for each $h\in\IS_k$ the function
	$\nu_h:\IR_{>0}\to\IC$ with  $y\mapsto\nu_h(y):=\skalar{\kappa_y,h}_{\IL^2(\rmw)}$ is
	Borel-measurable, where evidently
	$\tfrac{1}{n}\sum_{i\in\nset{n}}\nu_h(Y_i)=\skalar{\MelYhat,h}_{\IL^2(\rmw)}$ and
	hence
	$\overline{\nu}_{h}=\skalar{\MelYhat-\MelY,h}_{\IL^2(\rmw)}$. Consequently,
	we obtain
	\begin{equation}
		\norm{(\MelYhat-\MelY)\mathds{1}_{[-k,k]}}^2_{\IL^2(\rmw)}=\sup\{|\skalar{\MelYhat-\MelY,h}_{\IL^2(\rmw)}|^2:h\in\IB_{k}\}=
		\sup\{|\overline{\nu}_{h}|^2:h\in\IB_{k}\}.
	\end{equation}
	Note that, the unit ball
	$\IB_{k}$ is not a countable set, however, it contains a
	countable dense subset, say $\mathcal B_{k}$, since $\IL^2(\rmw)$ is
	separable. Exploiting the continuity of the inner product it is straightforward to see that
	\begin{equation*}
		\norm{(\MelYhat-\MelY)\mathds{1}_{[-k,k]}}^2_{\IL^2(\rmw)}=\sup\{|\overline{\nu}_{h}|^2:h\in\IB_{k}\}=
		\sup\{|\overline{\nu}_{h}|^2:h\in\mathcal B_{k}\}.
	\end{equation*}
	The last identity provides the necessary argument to apply below
	Talagrand's inequality (\ref{lem: Talagrand}) where we need to
	calculate  the three constants $\psi$, $\Psi$ and $\tau$. We note
	that the function $\kappa:\IR_{>0}\times\IR\to\IC$  is not bounded. Therefore we
	decompose $\kappa=\kappa^{\mathrm{b}}+\kappa^{\mathrm{u}}$ into a
	bounded function
	$\kappa^{\mathrm{b}}: \IR_{>0}\times\IR\to\IC$ with $(y,t)\mapsto
	\kappa^{\mathrm{b}}(y,t):=y^{c-1}\mathds{1}_{(0,d)}(y^{c-1})y^{\iota2\pi
		t}$ and an unbounded function $\kappa^{\mathrm{u}}: \IR_{>0}\times\IR\to\IC$ with $(y,t)\mapsto
	\kappa^{\mathrm{u}}(y,t):=y^{c-1}\mathds{1}_{[d,\infty)}(y^{c-1})y^{\iota2\pi
		t}$  where $d:=n^{1/3}$. For
	$\mathrm{j}\in\{\mathrm{b},\mathrm{u}\}$ setting
	$\MelYhat^{\mathrm{j}}(t)=\tfrac{1}{n}\sum_{i\in\nset{n}}\kappa^{\mathrm{j}}(Y_i,t)$ and
	$\MelY^{\mathrm{j}}(t)=\E\big[\kappa^{\mathrm{j}}(Y_i,t)\big]$ for
	each $t\in\IR$ it follows $\MelYhat-\MelY=\MelYhat^{\mathrm{b}}-\MelY^{\mathrm{b}}+\MelYhat^{\mathrm{u}}-\MelY^{\mathrm{u}}$.
	Introducing further $\nu_h^{\mathrm{b}}:\IR_{>0}\to\IC$ with
	$y\mapsto\nu_h^{\mathrm{b}}(y):=\skalar{\kappa_y^{\mathrm{b}},h}_{\IL^2(\rmw)} $  and
	$\nu_h^{\mathrm{u}}:\IR_{>0}\to\IC$ with  $y\mapsto\nu_h^{\mathrm{u}}(y):=\skalar{\kappa_y^{\mathrm{u}},h}_{\IL^2(\rmw)} $ we
	evidently have
	$\overline{\nu}_h=\overline{\nu}_h^{\mathrm{b}}+\overline{\nu}_h^{\mathrm{u}}$
	and thus
	\begin{align}\nonumber
		\norm{(\MelYhat-\MelY)\mathds{1}_{[-k,k]}}^2_{\IL^2(\rmw)}&=
		\sup\{|\overline{\nu}_{h}|^2:h\in\mathcal B_{k}\}=\sup\{|\overline{\nu}_h^{\mathrm{b}}+\overline{\nu}_h^{\mathrm{u}}|^2:h\in\mathcal B_{k}\}\\\nonumber
		&\hspace*{5ex}\leq
		2\sup\{|\overline{\nu}_{h}^{\mathrm{b}}|^2:h\in\mathcal B_{k}\}
		+
          2\sup\{|\overline{\nu}_{h}^{\mathrm{u}}|^2:h\in\mathcal
          B_{k}\}\hfill\\\label{proof:conc:decomp}
          &\hspace*{5ex}=2\norm{(\MelYhat^{\mathrm{b}}-\MelY^{\mathrm{b}})\mathds{1}_{[-k,k]}}^2_{\IL^2(\rmw)}+2\norm{(\MelYhat^{\mathrm{u}}-\MelY^{\mathrm{u}})\mathds{1}_{[-k,k]}}^2_{\IL^2(\rmw)}.
	\end{align}
	In \cref{re:conc:1} below we bound the
	expectation of the first
	term on the right hand side with the help of Talagrand's inequality (\cref{lem: Talagrand})
	and the  expectation of the second term.\qed
\end{rem}

\begin{lem}\label{re:conc:1} Let $\rmw:\IR\to\IR_{\geq0}$ be a density function, such that
  $\1_{[-k,k]}\in\IL^\infty(\rmw)$ for each $k\in\IR_{>0}$. Setting
	$\sigma_{Y}^2:=1+\E[Y_1^{2(c-1)}]$, 
	$a_Y:=6\norm{f^Y}_{\IL^\infty(\rmx^{2c-1})}/\sigma_Y^2$,
        $k_Y:=1\vee3a_Y^2$, 
	\begin{equation*}
		\Delta_k^{\rmw} := \norm{\1_{[-k,k]}}_{\IL^\infty(\rmw)}\text{
			and }\delta_k^\rmw
		:=
		\frac{\log(\Delta_k^\rmw\lor(k+2))}{\log(k+2)}\in\IR_{\geq1},\quad\forall
		k\in\IR_{\geq1},
	\end{equation*}
	there exists an universal numerical constant $\Const\in\IR_{>0}$  such that for all $n\in\IN$ and $k\in\IR_{\geq1}$  we have
	\begin{multline}\label{re:conc:1:e}
		\IE\left[\left(\norm{\mathds{1}_{[-k,k]}
			(\MelYhat^{\mathrm{b}}-\MelY^{\mathrm{b}})}^2_{\IL^2(\rmw)}-
		6\sigma_{Y}^2\Delta_k^{\rmw}\delta_k^\rmw k
		n^{-1}\right)_+\right] \\\leq  n^{-1}\times \Const\times
		\bigg[
		(1\vee\norm{f^Y}_{\IL^\infty(\rmx^{2c-1})}^2)(1+\norm{\mathds{1}_{[-k_Y,k_Y]}}_{\IL^2(\rmw)}^2)\frac{1}{a_Y}\exp\big(\frac{-k}{a_Y}\big)+n^{-4}k\Delta_k^\rmw\bigg]
	\end{multline}
	and $  \IE\left[\norm{\mathds{1}_{[-k,k]}
		(\MelYhat^{\mathrm{u}}-\MelY^{\mathrm{u}})}^2_{\IL^2(\rmw)}\right]\leq2\E\big[Y_1^{8(c-1)}\big]
	n^{-3}k\Delta_{k}^\rmw$.
\end{lem}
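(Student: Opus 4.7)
The proof treats the two assertions separately. The first, involving the bounded part $\MelYhat^{\mathrm{b}}-\MelY^{\mathrm{b}}$, will invoke Talagrand's inequality (\cref{lem: Talagrand}) along the lines suggested in \cref{rem:conc}; the second, involving the unbounded part, is a direct moment computation.

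For the unbounded assertion, the variance identity gives
\[
\E\Big[\norm{\mathds{1}_{[-k,k]}(\MelYhat^{\mathrm{u}}-\MelY^{\mathrm{u}})}_{\IL^2(\rmw)}^2\Big] \leq n^{-1}\,\E[Y_1^{2(c-1)}\mathds{1}_{[d,\infty)}(Y_1^{c-1})]\,\norm{\mathds{1}_{[-k,k]}}_{\IL^2(\rmw)}^2.
\]
Since $Y_1^{c-1}/d\geq 1$ on the indicator set, raising to the sixth power yields $Y_1^{2(c-1)}\mathds{1}_{[d,\infty)}(Y_1^{c-1})\leq d^{-6}Y_1^{8(c-1)}$. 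Combining with $d^{-6}=n^{-2}$ and $\norm{\mathds{1}_{[-k,k]}}_{\IL^2(\rmw)}^2\leq 2k\Delta_k^{\rmw}$ delivers the claimed bound $\leq 2\E[Y_1^{8(c-1)}]n^{-3}k\Delta_k^{\rmw}$.

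For the bounded assertion, I rewrite $\norm{\mathds{1}_{[-k,k]}(\MelYhat^{\mathrm{b}}-\MelY^{\mathrm{b}})}_{\IL^2(\rmw)}^2=\sup_{h\in\mathcal B_k}|\overline{\nu}_h^{\mathrm{b}}|^2$ and apply \cref{lem: Talagrand} with the three constants computed as follows. First, $\psi := n^{1/3}\norm{\mathds{1}_{[-k,k]}}_{\IL^2(\rmw)}$, from the uniform bound $|\kappa^{\mathrm{b}}(y,t)|\leq d$ combined with Cauchy--Schwarz. Second, and crucially, $\tau := \norm{f^Y}_{\IL^\infty(\rmx^{2c-1})}\Delta_k^{\rmw}$, obtained via the logarithmic substitution $u=\log y$ that turns $\int h(t)\rmw(t)y^{\iota 2\pi t}\,dt$ into an inverse Fourier transform of $h\rmw\mathds{1}_{[-k,k]}$; the resulting identity
\[
\E[|\nu_h^{\mathrm{b}}(Y_1)|^2]=\int e^{(2c-1)u}f^Y(e^u)\,|\mathcal F^{-1}[h\rmw\mathds{1}_{[-k,k]}](u)|^2\,du
\]
is bounded by extracting $\norm{f^Y}_{\IL^\infty(\rmx^{2c-1})}$ and applying Plancherel together with $\rmw\mathds{1}_{[-k,k]}\leq\Delta_k^{\rmw}$. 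Third, $\Psi^2 := \sigma_Y^2\Delta_k^{\rmw}\delta_k^{\rmw}k/n$, which dominates $(\E[\sup_h|\overline{\nu}_h^{\mathrm{b}}|])^2\leq\sigma_Y^2\norm{\mathds{1}_{[-k,k]}}_{\IL^2(\rmw)}^2/n\leq 2k\sigma_Y^2\Delta_k^{\rmw}/n$, the factor $\delta_k^{\rmw}\geq 1$ absorbing the residual constant into the Talagrand constant $\Const_{\mathrm{tal}}$.

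With these choices, the exponent $n\Psi^2/(6\tau)=\sigma_Y^2\delta_k^{\rmw}k/(6\norm{f^Y}_{\IL^\infty(\rmx^{2c-1})})=\delta_k^{\rmw}k/a_Y\geq k/a_Y$ produces the target $\exp(-k/a_Y)$ decay, and its product with $\tau/n\sim\norm{f^Y}_{\IL^\infty(\rmx^{2c-1})}\Delta_k^{\rmw}/n$ is reshaped into the stated $(1+\norm{\mathds{1}_{[-k_Y,k_Y]}}_{\IL^2(\rmw)}^2)a_Y^{-1}\exp(-k/a_Y)$ form by splitting at $k_Y=1\vee 3a_Y^2$ and using $\Delta_k^{\rmw}\leq(k+2)^{\delta_k^{\rmw}}$ to dominate the polynomial prefactor by the exponential tail for $k>k_Y$. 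The second Talagrand term contributes $(\psi^2/n^2)\exp(-n\Psi/(100\psi))\leq 2k\Delta_k^{\rmw}n^{-4/3}\cdot\exp(-cn^{1/6})$, easily absorbed into the $n^{-4}k\Delta_k^{\rmw}$ remainder. The main obstacle is the Mellin--Plancherel estimate $\tau\leq\norm{f^Y}_{\IL^\infty(\rmx^{2c-1})}\Delta_k^{\rmw}$: replacing the naive variance bound $\sigma_Y^2\norm{\mathds{1}_{[-k,k]}}_{\IL^2(\rmw)}^2$ by the cleaner $\Delta_k^{\rmw}$ is what converts Talagrand's exponent $n\Psi^2/(6\tau)$ into a factor linear in $k$ rather than bounded by the constant $\delta_k^{\rmw}/12$, and is therefore what unlocks the $\exp(-k/a_Y)$ decay.
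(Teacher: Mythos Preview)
Your approach matches the paper's: truncate at $d=n^{1/3}$, handle the unbounded part by a moment bound, and apply Talagrand to the bounded part with $\psi$, $\Psi$, $\tau$ computed as you indicate. The Mellin--Plancherel estimate for $\tau$ via the Fourier substitution is exactly the paper's key step (the paper writes it as $\norm{\nu_h}_{\IL^2(\rmx^{1-2c})}^2=\norm{\rmw h}_{\IL^2}^2\leq\Delta_k^{\rmw}$), and your treatment of the $\psi$-term and of the unbounded part is correct.

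There is, however, a genuine gap in the small-$k$ regime. You take $\tau=\norm{f^Y}_{\IL^\infty(\rmx^{2c-1})}\Delta_k^{\rmw}$ only, and then say the prefactor is ``reshaped'' by the splitting at $k_Y$, but you describe only the case $k\geq k_Y$. For $k<k_Y$ your single choice of $\tau$ does not suffice: the leading Talagrand term is then $n^{-1}\norm{f^Y}_{\IL^\infty(\rmx^{2c-1})}\Delta_k^{\rmw}\exp(-2\delta_k^{\rmw}k/a_Y)$, and for $k=1$ and $a_Y$ moderately large this equals (up to constants) $(\Delta_1^{\rmw})^{1-c/(a_Y\log 3)}$, which is unbounded in $\Delta_1^{\rmw}$. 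Since $\Delta_k^{\rmw}$ is an $\IL^\infty$-quantity while the target bound involves the $\IL^2$-quantity $\norm{\mathds{1}_{[-k_Y,k_Y]}}_{\IL^2(\rmw)}^2$, there is no way to pass from one to the other in general (take $\rmw$ a narrow spike). The paper closes this gap by using the \emph{minimum} of two admissible bounds for $\tau$, namely
\[
\tau=\norm{f^Y}_{\IL^\infty(\rmx^{2c-1})}\Delta_k^{\rmw}\ \wedge\ \sigma_Y^2\norm{\mathds{1}_{[-k,k]}}_{\IL^2(\rmw)}^2,
\]
and for $k<k_Y$ switches to the second (trivial variance) bound, which yields directly the prefactor $\sigma_Y^2\norm{\mathds{1}_{[-k_Y,k_Y]}}_{\IL^2(\rmw)}^2$ by monotonicity in $k$. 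You should add this second $\tau$-bound and invoke it on $\{k<k_Y\}$; the rest of your argument then goes through.
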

\vspace*{2ex}
\begin{proof}[\cref{re:conc:1}]
	Consider first the second claim. For each $t\in\IR$ we have $\E\big[\MelYhat^{\mathrm{u}}(t)\big]=\MelY^{\mathrm{u}}(t)=\E\big[\kappa^{\mathrm{u}}(Y_1,t)\big]$  which in
	turn implies
	\begin{align*}
		n\IE\big[
		|\MelYhat^{\mathrm{u}}(t)-\MelY^{\mathrm{u}}(t)|^2\big]&=\Var\big(\kappa^{\mathrm{u}}(Y_1,t)\big)\leq
		\E\big[|\kappa^{\mathrm{u}}(Y_1,t)|^2\big]= \E\big[Y_1^{2(c-1)}
		\mathds{1}_{[d,\infty)}(Y_1^{c-1})]\leq d^{-2l}\E\big[Y_1^{2(c-1)(1+l)}\big]
	\end{align*}
	and thus making use of  $d=n^{1/3}$
	we obtain the claim, that is
	\begin{align*}
		\IE\left[\norm{\mathds{1}_{[-k,k]}
			(\MelYhat^{\mathrm{u}}-\MelY^{\mathrm{u}})}^2_{\IL^2(\rmw)}\right]&\leq
		\tfrac{1}{nd^{6}}\E\big[Y_1^{2(c-1)(1+3)}\big]\norm{\mathds{1}_{[-k,k]}}^2_{\IL^2(\rmw)}\leq
		n^{-3}\E\big[Y_1^{8(c-1)}\big]2k\Delta_{k}^\rmw.
	\end{align*}
	Secondly consider \eqref{re:conc:1:e}. Given the identity (see Remark
	\ref{rem:conc})
	\begin{equation*}
		\norm{(\MelYhat^{\mathrm{b}}-\MelY^{\mathrm{b}})\mathds{1}_{[-k,k]}}^2_{\IL^2(\rmw)}=\sup\{|\overline{\nu}_{h}^{\mathrm{b}}|^2:h\in\mathcal B_{k}\}
	\end{equation*}
	we intent to apply Talagrand's inequality (\cref{lem: Talagrand}) where we need to
	calculate  the three quantities $\psi$, $\Psi$ and $\tau$. Consider
	$\psi$ first. Evidently, for each $k\in\IR_{\geq1}$ we have
	\begin{multline*}
		\sup_{h\in \mathcal  B_k}\left\{\sup_{y\in\IR_{>0}}\left\{|\nu_{h}^{\mathrm{b}}(y)|^2\right\}\right\}=
		\sup_{y\in\IR_{>0}}\left\{\sup_{h\in\mathcal B_k}\left\{|\skalar{\kappa_y^{\mathrm{b}},h}_{\IL^2(\rmw)}|^2\right\}\right\}\\=
		\sup_{y\in\IR_{>0}}\left\{\norm{\kappa_y^{\mathrm{b}}\mathds{1}_{[-k,k]}}_{\IL^2(\rmw)}^2\right\}\leq d^2\norm{\mathds{1}_{[-k,k]}}_{\IL^2(\rmw)}^2\leq2d^2k\Delta_k^\rmw=: \psi^2.
	\end{multline*}
	Consider next
	$\Psi$. Evidently, for each $t\in\IR$ we have
	\begin{equation*}
		n\IE\big[
		|\MelYhat^{\mathrm{b}}(t)-\MelY^{\mathrm{b}}(t)|^2\big]=\Var\big(\kappa^{\mathrm{b}}(Y_1,t)\big)\leq
		\E\big[|\kappa^{\mathrm{b}}(Y_1,t)|^2\big]\leq \E\big[Y_1^{2(c-1)}\big]\leq\sigma_Y^2
	\end{equation*}
	which for each
	$k\in\IR_{\geq1}$ implies
	\begin{align*}
		\E\left[\sup_{h\in \mathcal H}|\overline{\nu}_{h}^{\mathrm{b}}|^2\right]
		&= \E\left[\norm{(\MelYhat^{\mathrm{b}}-\MelY^{\mathrm{b}})\mathds{1}_{[-k,k]}}^2_{\IL^2(\rmw)}\right]
		\leq
		\tfrac{1}{n}\sigma_Y^2\norm{\mathds{1}_{[-k,k]}}_{\IL^2(\rmw)}^2\leq
		\tfrac{1}{n}\sigma_Y^22k\Delta_k^{\rmw}\delta_k^\rmw=:  \Psi^2.
	\end{align*}
	Finally, consider $\tau$. For each $h\in\mathcal B_k$ we have
	\begin{align*}
		n\E\big[|\overline{\nu}_{h}^{\mathrm{b}}|^2\big]=\Var\big[\skalar{\kappa^{\mathrm{b}}(Y_1,\cdot),h}_{\IL^2(\rmw)}\big]&\leq
		\E\big[|\skalar{\kappa^{\mathrm{b}}(Y_1,\cdot),h}_{\IL^2(\rmw)}|^2\big]\leq
		\E\big[|\skalar{\kappa(Y_1,\cdot),h}_{\IL^2(\rmw)}|^2\big]=\E\big[|\nu_h|^2\big].
	\end{align*}
	Since  $h\rmw\in\IL^2\cap\IL^1$ and
	$|\nu_h(y)|^2=|\skalar{h,\kappa_y}_{\IL^2(\rmw)}|^2=|\M_{1-c}^{-1}[h\rmw](y)|^2$
	for all $y\in\IR_{>0}$ we have $\nu_h\in\IL^2(\rmx^{1-2c})$. Consequently, using Parseval's identity
	we obtain
	\begin{equation*}
		\norm{\nu_h}_{\IL^2(\rmx^{1-2c})}^2=\norm{\M_{1-c}[\nu_h]}_{\IL^2}^2=\norm{\rmw
			h}_{\IL^2}^2\leq \norm{\rmw\mathds{1}_{[-k,k]}}_{\IL^\infty}\norm{h}_{\IL^2(\rmw)}^2\leq\Delta_k^\rmw
	\end{equation*}
	which in turn for each $h\in\mathcal B_k$ implies
	\begin{equation*}
		\E\big[|\nu_h|^2\big] \leq \norm{f^Y}_{\IL^\infty(\rmx^{2c-1})}\norm{\nu_h}_{\IL^2(\rmx^{1-2c})}^2\leq \norm{f^Y}_{\IL^\infty(\rmx^{2c-1})}\Delta_k^\rmw.
	\end{equation*}
	On the other hand side for each $h\in\mathcal B_k$ we have  also
        \begin{equation*}
	\E\big[|\nu_h|^2\big]\leq
	\norm{\mathds{1}_{[-k,k]}}_{\IL^2(\rmw)}^2\E[Y_1^{2(c-1)}]\leq
	\norm{\mathds{1}_{[-k,k]}}_{\IL^2(\rmw)}^2\sigma_Y^2,
      \end{equation*}
      and hence 
	\begin{equation*}
		n\E\big[|\overline{\nu}_{h}^{\mathrm{b}}|^2\big]\leq
		\norm{f^Y}_{\IL^\infty(\rmx^{2c-1})}\Delta_k^\rmw \wedge \sigma_Y^2 \norm{\mathds{1}_{[-k,k]}}_{\IL^2(\rmw)}^2=:\tau.
	\end{equation*}
	Given $a_Y=6\norm{f^Y}_{\IL^\infty(\rmx^{2c-1})}/\sigma_Y^2$ and
	$k_Y=1\vee3a_Y^2$ for any $k\geq k_Y$ we
	have
	\begin{equation*}
		\Delta_k^\rmw\exp\left(\frac{-\sigma_Y^2\delta_k^{\rmw}k}{6\norm{f^Y}_{\IL^\infty(\rmx^{2c-1})}}\right)=\Delta_k^\rmw\exp\big(\tfrac{-\delta_k^{\rmw}}{a_Y}k\big)\leq \exp(-\tfrac{\delta_k^{\rmw}}{a_Y}(k-a_Y\log(k+2))\leq1
	\end{equation*}
	by exploiting that $x\geq a\log(x+2)$ for all $a>0$ and
	$x\geq1\vee3a^2$. Consequently, for any $k\geq k_Y$ we obtain (keep
	$\delta_k^{\rmw}\geq1$ in mind)
	\begin{align*}
		\frac{\tau}{n}\exp\left(\frac{-n\Psi^2}{6\tau}\right)&\leq
		\tfrac{\norm{f^Y}_{\IL^\infty(\rmx^{2c-1})}\Delta_k^\rmw}{n}\exp\left(\frac{-\sigma_Y^2\delta_k^{\rmw}k}{3\norm{f^Y}_{\IL^\infty(\rmx^{2c-1})}}\right)\leq \tfrac{\norm{f^Y}_{\IL^\infty(\rmx^{2c-1})}}{n}\exp\big(\frac{-k}{a_Y}\big)
	\end{align*}
	while for any $k< k_Y$ we conclude (using again $\delta_k^{\rmw}\geq1$)
	\begin{align*}
		\frac{\tau}{n}\exp\left(\frac{-n\Psi^2}{6\tau}\right)&\leq
		\tfrac{\sigma_Y^2 \norm{\mathds{1}_{[-k,k]}}_{\IL^2(\rmw)}^2}{n}\exp\left(\frac{-\sigma_Y^2\delta_k^{\rmw}k}{3\norm{f^Y}_{\IL^\infty(\rmx^{2c-1})}}\right)\leq \tfrac{\sigma_Y^2 \norm{\mathds{1}_{[-k_Y,k_Y]}}_{\IL^2(\rmw)}^2}{n}\exp\big(\frac{-k}{a_Y}\big).
	\end{align*}
	Combining both cases $k\geq k_Y$ and $k< k_Y$ we obtain for any $k\in\IR_{\geq1}$
	\begin{equation*}
		\frac{\tau}{n}\exp\left(\frac{-n\Psi^2}{6\tau}\right)\leq\tfrac{\norm{f^Y}_{\IL^\infty(\rmx^{2c-1})}+\sigma_Y^2 \norm{\mathds{1}_{[-k_Y,k_Y]}}_{\IL^2(\rmw)}^2}{n}\exp\big(\frac{-k}{a_Y}\big).
	\end{equation*} 
	Evaluating the bound given by Talagrand's inequality (\ref{lem:
		Talagrand}) there exists an universal numerical constant
	$\Const_{\mathrm{tal}}\in\IR_{>0}$  such that  for each $k\in\IR_{\geq1}$ we have
	(keep $d=n^{1/3}$, $\sigma_Y^2\geq1$ and $\delta_k^\rmw\geq1$ in mind)
	\begin{align*}
		\IE\left[\left(\norm{\mathds{1}_{[-k,k]}
			(\MelYhat^{\mathrm{b}}-\MelY^{\mathrm{b}})}^2_{\IL^2(\rmw)}-
		6\sigma_{Y}^2\Delta_k^{\rmw}\delta_k^\rmw k
          n^{-1}\right)_+\right]
          \hspace*{-40ex}&\\
                         &\leq  n^{-1} \Const_{\mathrm{tal}}
		\bigg[n^{-1/3}k\Delta_k^\rmw\exp\left(\frac{-n^{1/6}}{100}\right) 
		\\&\phantom{=}+(\norm{f^Y}_{\IL^\infty(\rmx^{2c-1})}+\sigma_Y^2
		\norm{\mathds{1}_{[-k_Y,k_Y]}}_{\IL^2(\rmw)}^2)\exp\big(\frac{-k}{a_Y}\big)\bigg]\\
		&\leq  n^{-1} \Const
		\bigg[n^{-4}k\Delta_k^\rmw  n^{11/3}\exp\left(\frac{-n^{1/6}}{100}\right) 
		\\&\phantom{=}+(1\vee\norm{f^Y}_{\IL^\infty(\rmx^{2c-1})}^2)(1+\norm{\mathds{1}_{[-k_Y,k_Y]}}_{\IL^2(\rmw)}^2)\frac{1}{a_Y}\exp\big(\frac{-k}{a_Y}\big)\bigg],
	\end{align*}
	which together with   $n^b\exp(-an^{1/c})\leq (\tfrac{cb}{ae})^{cb}$ for all
	$a,b,c\in\IR_{>0}$, and hence $n^{11/3}\exp(\frac{-n^{1/6}}{100})\leq (1100)^{22}$  shows \eqref{re:conc:1:e} and completes the proof.
\end{proof}

\section{Proofs of Section \cref{sec: 3}}\label{app: sec3}
\begin{lem}\label{lem:2}
  There exists an universal numerical constant $\Const\in\IR_{\geq1}$
  such that for any $k\in\IR_{>0}$ we have
     \begin{align*}
     &\IE\left[\norm{\mathds{1}_{[-k,k]}\MelUhat^\dagger\mathds{1}_{\evM}}_{\IL^2(\rmv)}^2\right]
    \leq 4\big(1\lor\IE[U_1^{2(c-1)}]\big)\norm{\mathds{1}_{[-k,k]}\MelU^\dagger}_{\IL^2(\rmv)}^2,\\
      &\IE\left[\norm{\mathds{1}_{[-k,k]}\MelX\mathds{1}_{\evM^C}}_{\IL^2(\rmv)}^2\right]
      \leq 4\big(1\lor\IE[U_1^{2(c-1)}]\big)\norm{\mathds{1}_{[-k,k]}\MelX\big(1\vee|\MelU|^2(m\land n)\big)^{-1/2}}_{\IL^2(\rmv)}^2, \\
      &\IE\left[\norm{\mathds{1}_{[-k,k]}\MelX(\MelU-\MelUhat)\MelUhat^\dagger\mathds{1}_{\evM}}_{\IL^2(\rmv)}^2\right]
      \leq 4\big(1\lor\Const\,\IE[U_1^{4(c-1)}]\big) \norm{\mathds{1}_{[-k,k]}\MelX\big(1\vee|\MelU|^2m\big)^{-1/2}}_{\IL^2(\rmv)}^2.
   \end{align*}
\end{lem}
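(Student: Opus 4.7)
The plan is to apply Fubini's theorem to each norm, reducing every bound to an estimate of a pointwise expectation at a fixed frequency $t\in[-k,k]$. Throughout, fix $t$ and write for brevity $w:=\MelU(t)$, $W:=\MelUhat(t)$, so that $W=m^{-1}\sum_{j\in\nset{m}}U_j^{c-1+\iota2\pi t}$ has mean $w$, and $\evM(t)=\{(m\land n)|W|^2\geq1\}$. Two key deterministic consequences will be used repeatedly: on $\evM(t)$ one has $|W|^{-2}\leq m\land n$, and the bias bound $\Var(W)=m^{-1}\Var(U_1^{c-1+\iota2\pi t})\leq m^{-1}\IE[U_1^{2(c-1)}]$.

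For the first and second bounds the strategy is a standard ``large/small'' dichotomy. In the \emph{large regime} $|w|^2(m\land n)\geq 4$ we have $2/\sqrt{m\land n}\leq|w|$, so the event $\{|W|^2<1/(m\land n)\}$ forces $|W-w|>|w|/2$; Markov's inequality then yields $\Pz(\evM^C(t))\leq 4\IE[U_1^{2(c-1)}]/(m|w|^2)$. Splitting $\IE[|W|^{-2}\1_{\evM(t)}]$ along the same event and using $|W|\geq|w|/2$ on its complement gives $\IE[|W|^{-2}\1_{\evM(t)}]\leq 4/|w|^2+(m\land n)\Pz(|W-w|>|w|/2)\leq 4(1+\IE[U_1^{2(c-1)}])/|w|^2$. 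In the \emph{small regime} $|w|^2(m\land n)<4$ everything is trivial: $1\vee|w|^2(m\land n)\leq 4$ forces $\Pz(\evM^C(t))\leq 1\leq 4/(1\vee|w|^2(m\land n))$, and on $\evM(t)$ we bluntly use $|W|^{-2}\leq m\land n\leq 4/|w|^2$. Multiplying the pointwise bounds by $|\MelX(t)|^2\rmv(t)$ or $\rmv(t)$ and integrating yields the first two claims with constant $4(1\vee\IE[U_1^{2(c-1)}])$.

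The third bound is the main obstacle, since on $\evM(t)$ one still has to control the ratio $|W-w|^2|W|^{-2}$. The same dichotomy applies. In the large regime $|w|^2m\geq 4$, split according to $\{|W-w|\leq|w|/2\}$: on that event $|W|\geq|w|/2$, giving contribution $\leq 4|w|^{-2}\IE|W-w|^2\leq 4\IE[U_1^{2(c-1)}]/(m|w|^2)$; on the complementary event use $|W|^{-2}\leq m\land n$ together with $\1_{\{|W-w|>|w|/2\}}\leq 4|W-w|^2/|w|^2$ to obtain a contribution bounded by $4(m\land n)|w|^{-2}\IE|W-w|^4$. Here the fourth moment of $W-w$ is crucial and is estimated by a Rosenthal-type (or Marcinkiewicz--Zygmund) inequality for sums of i.i.d.\ centred complex random variables, yielding an absolute constant $\Const$ with
\[
\IE|W-w|^4\leq \Const\,\bigl(m^{-2}(\IE|U_1^{c-1}|^2)^2+m^{-3}\IE|U_1^{c-1}|^4\bigr)\leq \Const\,m^{-2}\IE[U_1^{4(c-1)}],
\]
where Cauchy--Schwarz absorbs the second-moment squared into the fourth moment. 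Combining the two pieces one obtains $\IE[|W-w|^2|W|^{-2}\1_{\evM(t)}]\leq \Const'(1\vee\IE[U_1^{4(c-1)}])/(m|w|^2)$. In the small regime $|w|^2m<4$ the trivial bound $|W-w|^2|W|^{-2}\1_{\evM(t)}\leq(m\land n)|W-w|^2$ gives $\IE[\cdot]\leq\IE[U_1^{2(c-1)}]\leq 4\IE[U_1^{2(c-1)}]/(1\vee|w|^2m)$. Integrating the resulting pointwise estimate $\IE[|W-w|^2|W|^{-2}\1_{\evM(t)}]\leq 4(1\vee\Const\,\IE[U_1^{4(c-1)}])(1\vee|w|^2m)^{-1}$ against $|\MelX(t)|^2\rmv(t)\1_{[-k,k]}(t)$ delivers the third claim and completes the proof.
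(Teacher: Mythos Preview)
Your proof is correct and follows essentially the same route as the paper: reduce to pointwise frequency estimates via Fubini, control $\Pz(\evM^C)$ and $\IE[|W|^{-2}\1_{\evM}]$ by a large/small dichotomy together with Markov, and handle the third term with a fourth-moment (Rosenthal/Petrov) bound. The only discrepancy is in the constants: your case split on $\{|W-w|\leq|w|/2\}$ produces $4(1+\IE[U_1^{2(c-1)}])$ for the first bound (and similarly a factor $4$ rather than $2$ in the third), which is bounded by $8(1\vee\IE[U_1^{2(c-1)}])$, not the $4(1\vee\IE[U_1^{2(c-1)}])$ you claim at the end; the paper recovers the stated constant by using the algebraic inequality $|w|^2\leq 2|W|^2+2|W-w|^2$ (and its analogue $1\leq 2|w|^{-2}(|W-w|^2+|W|^2)$ for the third bound) in place of the case split.
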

\begin{proof}[\cref{lem:2}]
  We start our proof with the observation that for each $t\in\IR$ we
  have $\IE[\MelUhat(t)]=\MelU(t)$,
  $m\IE\big[|\MelUhat(t)-\MelU(t)|^2\big]\leq \IE[U_1^{2(c-1)}]$, and
  $m^2\IE\big[|\MelUhat(t)-\MelU(t)|^4\big]\leq
  \Const\,\IE[U_1^{4(c-1)}]$ for some universal numerical constant
  $\Const\in\IR_{\geq1}$ by applying Theorem 2.10 in \cite{Petrov1995}. We
  use those bounds without further reference. Below we show that for
  each $t\in\IR$ we have
  \begin{align}\label{lem:2:p1}
    &\IE\left[|\MelU(t)\MelUhatrez(t)|^2\mathds{1}_{\evM}(t)\right]
      \leq 4\big(1\lor\IE[U_1^{2(c-1)}]\big)\\\nonumber
    &\IE\left[\mathds{1}_{\evM^C}(t)\right]=\IP\left((n\land m)|\MelUhat(t)|^2<1\right) \leq 4\big(1\lor\IE[U_1^{2(c-1)}]\big)\big(1\vee|\MelU(t)|^2(m\land
      n)\big)^{-1}\\
    &\IE\left[|\MelU(t)-\MelUhat(t)|^2|\MelUhat^\dagger(t)|^2\mathds{1}_{\evM}(t)\right]
      \leq 4\big(1\lor\Const\,\IE[U_1^{4(c-1)}])\big) (1\vee|\MelU(t)|^2m)^{-1}\label{lem:2:p3}
  \end{align}
  Evidently, by applying Fubini’s theorem from the last bounds follow
  immediatly \cref{lem:2}. It remains
  to show (\eqref{lem:2:p1}-\eqref{lem:2:p3}). Let $t\in\IR$ be fixed.
  Consider \eqref{lem:2:p1} first.  Exploiting
  $|\MelUhat(t)\MelUhatrez(t)|^2\mathds{1}_{\evM}(t)=\mathds{1}_{\evM}(t)$
  and $2|\MelUhat(t)|^2+2|\MelU(t)-\MelUhat(t)|^2\geq |\MelU(t)|^2$ we
  have
  \begin{align*}
    \IE\left[|\MelU(t)\MelUhatrez(t)|^2\mathds{1}_{\evM}(t)\right]&\leq 2
    \IE\left[|\MelU(t)-\MelUhat(t)|^2|\MelUhatrez(t)|^2\mathds{1}_{\evM}(t)
      + \mathds{1}_{\evM}(t)\right] \\&\leq 2\left((m\land
      n)\IE[|\MelU(t)-\MelUhat(t)|^2]+1\right)\\&\leq
    2\left(\IE[U_1^{2(c-1)}]+1\right)
    \leq 4\left(1\lor\IE[U_1^{2(c-1)}]\right)
  \end{align*}
  which shows \eqref{lem:2:p1}. Secondly, \eqref{lem:2:p3} is
  trivially satisfied if
  $1\leq 4(1\lor\IE[U_1^{2(c-1)}])(1\lor |\MelU(t)|^2(n\land
  m))^{-1}$. Otherwise, from
  $(1\lor |\MelU(t)|^2(n\land m))> 4(1\lor\IE[U_1^{2(c-1)}])\geq 4$
  follows $(n\land m)^{-1}<|\MelU(t)|^2/4$ which using Markov's
  inequality implies \eqref{lem:2:p2}, that is
  \begin{align*}
    \IP\left(|\MelUhat(t)|^2<(n\land m)^{-1}\right)&\leq
    \IP\left(|\MelUhat(t)-\MelU(t)|>|\MelU(t)|/2\right)\\&\leq
    4\IE\big[|\MelUhat(t)-\MelU(t)|^2\big]|\MelU(t)|^{-2}\\
    &\leq 4 \IE[U_1^{2(c-1)}] (|\MelU(t)|^2m)^{-1}\\
    &\leq   4(1\lor\IE[U_1^{2(c-1)}])(1\lor |\MelU(t)|^2(n\land m))^{-1}.
  \end{align*}
  Finally, consider \eqref{lem:2:p3}. Evidently, we have on the
  one hand 
  \begin{equation*}
    \IE\left[|\MelU(t)-\MelUhat(t)|^2|\MelUhat^\dagger(t)|^2\mathds{1}_{\evM}(t)\right]\leq
    (n\land m)m^{-1}  \IE[U_1^{2(c-1)}]\leq \IE[U_1^{2(c-1)}]
  \end{equation*}
  while on the other hand  
  \begin{multline*}
    \IE\left[|\MelU(t)-\MelUhat(t)|^2|\MelUhat^\dagger(t)|^2\mathds{1}_{\evM}(t)\right]\\\leq
    2\IE\left[|\MelU(t)-\MelUhat(t)|^2|\MelUhat^\dagger(t)|^2\mathds{1}_{\evM}(t)\big(\frac{|\MelU(t)-\MelUhat(t)|^2}{|\MelU(t)|^2}+\frac{|\MelUhat(t)|^2}{|\MelU(t)|^2}\big)\right]\\
    \leq 2 \frac{(m\land n)\IE\big[|\MelU(t)-\MelUhat(t)|^4\big]}{|\MelU(t)|^2}+2 \frac{\IE\big[|\MelU(t)-\MelUhat(t)|^2\big]}{|\MelU(t)|^2}\\
    \leq 2(\Const \IE[U_1^{4(c-1)}]+ \IE[U_1^{2(c-1)}]) (|\MelU(t)|^2m)^{-1}.
  \end{multline*}
  Combining both bounds we obtain \eqref{lem:2:p3}, which completes the
  proof.
\end{proof}

		\section{Proofs of Section \cref{sec: 4}}\label{app: 4}
We first recall an inequality due to \cite{Talagrand1996} which in  this
form  for example is stated by
\cite{BirgeMassart1998} in equation (5.13) in Corollary 2. We make use
of it in the proof of \cref{bound:event} below.
\begin{lem}[Talagrand's inequality]\label{lem:Talagrand:2}  Let
	$(Z_i)_{i\in\nset{m}}$ be independent and identically distributed $\mathcal Z$-valued random
	variables and let $\{\nu_{t}:t\in\mathcal T\}$ be countable class of
	Borel-measurable functions. For $t\in\mathcal T$ setting
        \begin{equation*}
	\overline{\nu}_{t}=m^{-1}\sum_{i\in\nset{m}}\left\{\nu_{t}(Z_i)-\E\left(\nu_{t}(Z_i)\right) \right\}
      \end{equation*}
 we have
	\begin{align}
		\Pz\left[\sup_{t\in\mathcal T}|\overline{\nu}_t|\geq
		2\Psi+\kappa\right]\leq 3
		\exp\left(-\Const_{\mathrm{tal}} m\big(\frac{\kappa^2}{\tau}\wedge\frac{\kappa}{\psi}\big)\right) 
	\end{align}
	for some universal numerical constant $\Const_{\mathrm{tal}}\in\IR_{>0}$  and where
	\begin{equation*}
		\sup_{t\in\mathcal T}\left\{\sup_{z\in\mathcal Z}\left\{|\nu_{t}(z)|\right\}\right\}\leq \psi,\qquad \E\left[\sup_{t\in \mathcal T}|\overline{\nu}_{t}|\right]\leq \Psi,\qquad \sup_{t\in \mathcal T}\left\{\E\big[|\nu_{t}(Z_1)|^2\big]\right\}\leq\tau.
	\end{equation*}
\end{lem}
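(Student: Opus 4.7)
The plan is to reduce the stated inequality to Talagrand's concentration theorem for the supremum of a bounded empirical process, in the explicit form worked out in \cite{BirgeMassart1998}. Writing $Z := \sup_{t \in \mathcal{T}} |\overline{\nu}_t|$, the three hypotheses of the lemma provide exactly the inputs that such an inequality requires: a uniform sup-norm bound $\psi$ on the individual summands $\nu_t(Z_i)$, an expected-supremum bound $\E[Z] \leq \Psi$, and a weak-variance bound $\tau$ on the marginal second moments $\E[|\nu_t(Z_1)|^2]$. Countability of $\mathcal{T}$ guarantees measurability of $Z$ and legitimises the use of Talagrand's inequality.

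First, I would invoke a standard version of Talagrand's inequality, for instance in the Bousquet formulation, which yields for every $x > 0$
\begin{equation*}
\Pz\Bigl( Z \geq \E[Z] + \sqrt{\tfrac{2x(\tau + 2\psi\E[Z])}{m}} + \tfrac{x\psi}{3m} \Bigr) \leq e^{-x}.
\end{equation*}
Since $\E[Z] \leq \Psi$, the inclusion $\{Z \geq 2\Psi + \kappa\} \subseteq \{Z \geq \E[Z] + \Psi + \kappa\}$ holds, so it suffices to find the smallest $x > 0$ for which the deviation term in the display above is at most $\Psi + \kappa$; the extra $\Psi$ of slack is designed precisely to absorb both $\E[Z]$ and the bilinear cross-term $2\psi\E[Z]$ sitting inside the square root.

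Next I would split into two regimes. In the subgaussian regime, solving $\sqrt{2x\tau/m} \asymp \kappa$ gives $x \asymp m\kappa^2/\tau$; in the Bernstein regime, solving $x\psi/m \asymp \kappa$ gives $x \asymp m\kappa/\psi$. Taking the binding choice yields an exponent proportional to $m\bigl(\kappa^2/\tau \wedge \kappa/\psi\bigr)$, which is exactly the form announced. Repackaging the two separate tail contributions into a single exponential incurs at most a multiplicative factor, which is what produces the universal prefactor $3$ in the statement.

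The main obstacle is not conceptual but one of bookkeeping: turning a two-term tail bound into the clean symmetric form with exponent $\Const_{\mathrm{tal}} m(\kappa^2/\tau \wedge \kappa/\psi)$ and a single prefactor $3$ requires careful use of elementary inequalities such as $(a+b)^2 \leq 2a^2 + 2b^2$ and the absorption of the cross term $2\psi\E[Z]$ into the $\Psi$-slack, thereby fixing the universal numerical constant $\Const_{\mathrm{tal}}$. This algebraic repackaging is precisely the content of equation (5.13) of Corollary~2 in \cite{BirgeMassart1998}; for the present paper the proof therefore reduces to identifying the triple $(\psi, \Psi, \tau)$ above with the quantities appearing in their hypotheses and quoting their inequality verbatim.
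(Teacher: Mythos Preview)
The paper does not prove this lemma at all: it is stated as a quoted result, with the sentence immediately preceding it reading ``We first recall an inequality due to \cite{Talagrand1996} which in this form for example is stated by \cite{BirgeMassart1998} in equation (5.13) in Corollary 2.'' Your proposal is therefore strictly more than what the paper does---you sketch a derivation from Bousquet's version of Talagrand's inequality before arriving at the same citation---and in its final paragraph you land on exactly the reference the paper invokes.
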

\vspace*{2ex}
\begin{proof}[\cref{re:kg}]
	Let $\ko\in\mathcal{K}:=\nset{k_n}$ be arbitrary but fixed. Introduce
	$\MelXhat:=\MelYhat\MelUhatrez\mathds{1}_{\mathfrak{M}}$ and
	$\MelXcheck:=\MelY\MelUhatrez\mathds{1}_{\mathfrak{M}}=\MelX\MelU\MelUhatrez\mathds{1}_{\mathfrak{M}}$, for each
	$k\in\IR_{>0}$ we write shortly   $\MelXhat^k:=\MelXhat
	\mathds{1}_{[-k,k]}$,   $\MelXcheck^k:=\MelXcheck
	\mathds{1}_{[-k,k]}$ and $\MelX^k:=\MelX \mathds{1}_{[-k,k]}$. Consider the disjoint decomposition
	$\cK=\cK[<\ko]\cup\cK[\geq\ko]$ where
	$\cK[<\ko]:=\{k\in\cK:k<\ko\}$ and
	$\cK[\geq\ko]:=\{k\in\cK:k\geq\ko\}$, and similarly $\cK=\cK[\leq\ko]\cup\cK[>\ko]$. Evidently, we have
	\begin{equation*}
		\mathds{1}_{[-\khat,\khat]} \MelYhat\MelUhatrez\mathds{1}_{\mathfrak{M}}-\MelX=\MelXhat^{\khat}-\MelXcheck^{\khat}+(\MelXcheck^{\khat}-\MelXcheck^{\ko})\mathds{1}_{\khat\in\cK[<\ko]}+(\MelXcheck^{\ko}-\MelX)\mathds{1}_{\khat\in\cK[<\ko]}+(\MelXcheck^{\khat}-\MelX)\mathds{1}_{\khat\in\cK[\geq\ko]}
	\end{equation*}
	which in turn implies
	\begin{multline*}   
		\norm{\MelXhat^{\khat}- \MelX}_{\IL^2(\rmv)}^2\leq3\norm{\MelXhat^{\khat}-\MelXcheck^{\khat}}_{\IL^2(\rmv)}^2+3\norm{\MelXcheck^{\khat}-\MelXcheck^{\ko}}_{\IL^2(\rmv)}^2\mathds{1}_{\khat\in\cK[<\ko]}\\+3\big(\norm{\MelXcheck^{\ko}-\MelX}_{\IL^2(\rmv)}^2\mathds{1}_{\khat\in\cK[<\ko]}+\norm{\MelXcheck^{\khat}-\MelX}_{\IL^2(\rmv)}^2\mathds{1}_{\khat\in\cK[\geq\ko]}\big).
	\end{multline*}
	Combining the last bound and the elementary estimate (keep in mind
	that $\kn:=\max\cK$)
	\begin{equation*}
		\norm{\MelXcheck^{\ko}-\MelX}_{\IL^2(\rmv)}^2\mathds{1}_{\khat\in\cK[<\ko]}+\norm{\MelXcheck^{\khat}-\MelX}_{\IL^2(\rmv)}^2\mathds{1}_{\khat\in\cK[\geq\ko]}\leq
		\norm{\MelXcheck^{k_n}-\MelX^{k_n}}_{\IL^2(\rmv)}^2+\norm{\MelX^{\ko}-\MelX}_{\IL^2(\rmv)}^2
	\end{equation*}
	we have
	\begin{multline*}   
		\norm{\MelXhat^{\khat}-
			\MelX}_{\IL^2(\rmv)}^2\leq3\norm{\MelXhat^{\khat}-\MelXcheck^{\khat}}_{\IL^2(\rmv)}^2+3\norm{\MelXcheck^{\khat}-\MelXcheck^{\ko}}_{\IL^2(\rmv)}^2\mathds{1}_{\khat\in\cK[<\ko]}\\+3\norm{\MelXcheck^{k_n}-\MelX^{k_n}}_{\IL^2(\rmv)}^2+3\norm{\MelX^{\ko}-\MelX}_{\IL^2(\rmv)}^2
	\end{multline*}
	which together with the estimate
	\begin{align*}
		\norm{\MelXhat^{\khat}-\MelXcheck^{\khat}}_{\IL^2(\rmv)}^2&=\norm{\MelXhat^{\khat}-\MelXcheck^{\khat}}_{\IL^2(\rmv)}^2(\mathds{1}_{\khat\in\cK[\leq\ko]}+\mathds{1}_{\khat\in\cK[>\ko]})\leq \norm{\MelXhat^{\ko}-\MelXcheck^{\ko}}_{\IL^2(\rmv)}^2+\norm{\MelXhat^{\khat}-\MelXcheck^{\khat}}_{\IL^2(\rmv)}^2\mathds{1}_{\khat\in\cK[>\ko]}
	\end{align*}
	implies the upper bound
	\begin{align}\nonumber
		\norm{\MelXhat^{\khat}-	\MelX}^2&\leq3\norm{\MelXhat^{\ko}-\MelXcheck^{\ko}}_{\IL^2(\rmv)}^2+3\norm{\MelX-\MelX^{\ko}}_{\IL^2(\rmv)}^2+3\norm{\MelXcheck^{\kn}-\MelX^{k_n}}_{\IL^2(\rmv)}^2\\\label{re:kg:p1}&\phantom{=}+
		3
		\norm{\MelXhat^{\khat}-\MelXcheck^{\khat}}_{\IL^2(\rmv)}^2\mathds{1}_{\khat\in\cK[>\ko]}
		+3\norm{\MelXcheck^{\khat}-\MelXcheck^{\ko}}_{\IL^2(\rmv)}^2\mathds{1}_{\khat\in\cK[<\ko]}.
	\end{align}
	We bound separately the last two terms on the right hand side in \eqref{re:kg:p1}
	next. Consider
	$\norm{\MelXcheck^{\khat}-\MelXcheck^{\ko}}_{\IL^2(\rmv)}^2\mathds{1}_{\khat\in\cK[<\ko]}$
	first. Introduce the random decomposition $\cK[<\ko]=\cK[-]\cup\cK[-]^c$
	with index set
	\begin{equation}\label{re:kg:p-}
		\cK[-]:=\{k\in\cK[<\ko]:
		\norm{\MelXcheck^{\ko}-\MelXcheck^{k}}_{\IL^2(\rmv)}^2>8\sigmaYhat\pen_{\ko}^{\rmvhat}\}
	\end{equation}
	and its complement $\cK[-]^c:=\cK[<\ko]\setminus\cK[-]$. If
	$\cK[-]^c\ne\emptyset$ then for each $k\in\cK[-]^c$ we have
	\begin{equation*}
		\norm{\MelXcheck^{\ko}-\MelXcheck^{k}}_{\IL^2(\rmv)}^2\leq
		\norm{\MelXcheck^{\kn}}_{\IL^2(\rmv)}^2\leq 2\norm{\MelXcheck^{\kn}-\MelX^{\kn}}_{\IL^2(\rmv)}^2+2\norm{\MelX}_{\IL^2(\rmv)}^2
	\end{equation*}
	and also
	$\norm{\MelXcheck^{\ko}-\MelXcheck^{k}}_{\IL^2(\rmv)}^2\leq8\sigmaYhat\pen_{\ko}^{\rmvhat}$,
	which together imply
	\begin{equation*}
		\norm{\MelXcheck^{\khat}-\MelXcheck^{\ko}}_{\IL^2(\rmv)}^2\mathds{1}_{\khat\in\cK[-]^c}\leq \big(2\norm{\MelXcheck^{\kn}-\MelX^{\kn}}_{\IL^2(\rmv)}^2+2\norm{\MelX}_{\IL^2(\rmv)}^2\mathds{1}_{\evO^c}+8\sigmaYhat\pen_{\ko}^{\rmvhat}\mathds{1}_{\evO}\big)\mathds{1}_{\khat\in\cK[-]^c}.
	\end{equation*}
	If $\cK[-]\ne\emptyset$ then for each $k\in\cK[-]$ we have
	\begin{align*}
		\tfrac{1}{2}\norm{\MelXcheck^{\ko}-\MelXcheck^{k}}_{\IL^2(\rmv)}^2&\leq
		\norm{(\MelXcheck-\MelXhat)(\mathds{1}_{[-\ko,\ko]}-\mathds{1}_{[-k,k]})}_{\IL^2(\rmv)}^2+\norm{\MelXhat^{\ko}-\MelXhat^{k}}_{\IL^2(\rmv)}^2\\&\leq
		\norm{\MelXcheck^{\ko}-\MelXhat^{\ko}}_{\IL^2(\rmv)}^2+\norm{\MelXhat^{\ko}}_{\IL^2(\rmv)}^2-\norm{\MelXhat^{k}}_{\IL^2(\rmv)}^2
	\end{align*}
	which  using for the last estimate the definition
	\eqref{re:kg:khat} and \eqref{re:kg:p-} of $\khat$ and
	$\cK[-]$, respectively, implies
	\begin{align*}
		\norm{\MelXcheck^{\khat}-\MelXcheck^{\ko}}_{\IL^2(\rmv)}^2\mathds{1}_{\khat\in\cK[-]}
                                                                                                  		&\leq 4
		\norm{\MelXcheck^{\ko}-\MelXhat^{\ko}}_{\IL^2(\rmv)}^2\mathds{1}_{\khat\in\cK[-]}+4\big(\norm{\MelXhat^{\ko}}_{\IL^2(\rmv)}^2-\norm{\MelXhat^{\khat}}_{\IL^2(\rmv)}^2-2\sigmaYhat\pen_{\khat}^{\rmvhat}\big)\mathds{1}_{\khat\in\cK[-]}\\&\phantom{=}+4\big(2\sigmaYhat\pen_{\khat}^{\rmvhat}-\tfrac{1}{4}\norm{\MelXcheck^{\ko}-\MelXcheck^{\khat}}_{\IL^2(\rmv)}^2)\mathds{1}_{\khat\in\cK[-]}\\
		&\leq 4
		\norm{\MelXcheck^{\ko}-\MelXhat^{\ko}}_{\IL^2(\rmv)}^2\mathds{1}_{\khat\in\cK[-]}.
	\end{align*}
	Combining both cases $\khat\in\cK[-]$ and $\khat\in\cK[-]^c$  we obtain the bound 
	\begin{align}\nonumber
		\norm{\MelXcheck^{\khat}-\MelXcheck^{\ko}}_{\IL^2(\rmv)}^2\mathds{1}_{\khat\in\cK[<\ko]}&\leq \mathds{1}_{\khat\in\cK[<\ko]}\bigg(4
		\norm{\MelXcheck^{\ko}-\MelXhat^{\ko}}_{\IL^2(\rmv)}^2+2\norm{\MelXcheck^{\kn}-\MelX^{\kn}}_{\IL^2(\rmv)}^2\\\label{re:kg:p2}&\phantom{=}+2\norm{\MelX}_{\IL^2(\rmv)}^2\mathds{1}_{\evO^c}+8\sigmaYhat\pen_{\ko}^{\rmvhat}\mathds{1}_{\evO}\bigg).
	\end{align}
	Secondly, consider the term
	$\norm{\MelXhat^{\khat}-\MelXcheck^{\khat}}_{\IL^2(\rmv)}^2\mathds{1}_{\khat\in\cK[>\ko]}$
	in \eqref{re:kg:p1}. Introduce the random decomposition $\cK[>\ko]=\cK[+]\cup\cK[+]^c$
	with index set 
	\begin{equation}\label{re:kg:p+}
		\cK[+]:=\{k\in\cK[>\ko]:
		\norm{\MelXhat^{k}-\MelXcheck^{k}}_{\IL^2(\rmv)}^2>2\sigmaYhat\pen_{\ko}^{\rmvhat}\}
	\end{equation}
	and its complement $\cK[+]^c:=\cK[>\ko]\setminus\cK[+]$. If
	$\cK[+]^c\ne\emptyset$ then for each $k\in\cK[+]^c$ we have
	$\norm{\MelXhat^{k}-\MelXcheck^{k}}_{\IL^2(\rmv)}^2\leq
	\norm{\MelXhat^{\kn}-\MelXcheck^{\kn}}_{\IL^2(\rmv)}^2$ and
	$\norm{\MelXhat^{k}-\MelXcheck^{k}}_{\IL^2(\rmv)}^2\leq2\sigmaYhat\pen_{\ko}^{\rmvhat}$,
	which together imply
	\begin{equation*}
		\norm{\MelXhat^{\khat}-\MelXcheck^{\khat}}_{\IL^2(\rmv)}^2\mathds{1}_{\khat\in\cK[+]^c}\leq \big(\norm{\MelXhat^{\kn}-\MelXcheck^{\kn}}_{\IL^2(\rmv)}^2\mathds{1}_{\evO^c}+2\sigmaYhat\pen_{\ko}^{\rmvhat}\mathds{1}_{\evO}\big)\mathds{1}_{\khat\in\cK[+]^c}.
	\end{equation*}
	If $\cK[+]\ne\emptyset$ then for each $k\in\cK[+]$ we have 
	\begin{multline*}
		-\norm{\MelXhat^{\ko}}_{\IL^2(\rmv)}^2+\norm{\MelXhat^{k}}_{\IL^2(\rmv)}^2 - 2 \norm{\MelXhat^{k}-\MelXcheck^{k}}_{\IL^2(\rmv)}^2-2\norm{\MelXcheck^{k}-\MelXcheck^{\ko}}_{\IL^2(\rmv)}^2
		\\\hfill= \norm{\MelXhat^{k}-\MelXhat^{\ko}}_{\IL^2(\rmv)}^2  - 2
		\norm{\MelXhat^{k}-\MelXcheck^{k}}_{\IL^2(\rmv)}^2-2\norm{\MelXcheck^{k}-\MelXcheck^{\ko}}_{\IL^2(\rmv)}^2\\\hfill
		\leq \norm{\MelXhat^{k}-\MelXhat^{\ko}}_{\IL^2(\rmv)}^2-\norm{\MelXhat^{k}-\MelXhat^{\ko}}_{\IL^2(\rmv)}^2=0
	\end{multline*}
	which together with the definition \eqref{re:kg:khat} of $\khat$  implies
	\begin{multline*}
		\mathds{1}_{\khat\in\cK[+]}\big(- 2 \norm{\MelXhat^{\khat}-\MelXcheck^{\khat}}_{\IL^2(\rmv)}^2
		-2\sigmaYhat\pen_{\ko}^{\rmvhat}  +2\sigmaYhat\pen_{\khat}^{\rmvhat}
		-2\norm{\MelXcheck^{\khat}-\MelXcheck^{\ko}}_{\IL^2(\rmv)}^2\big)
		\\\leq \mathds{1}_{\khat\in\cK[+]}\big(\norm{\MelXhat^{\ko}}_{\IL^2(\rmv)}^2-
		2\sigmaYhat\pen_{\ko}^{\rmvhat} +2\sigmaYhat\pen_{\khat}^{\rmvhat}  - \norm{\MelXhat^{\khat}}_{\IL^2(\rmv)}^2\big)\leq0.
	\end{multline*}
	From  the last elementary bound we obtain (keep
	the definition \eqref{re:kg:p+} of $\cK[+]$ and $\evA:=\{\sigma_Y^2\leq2\sigmaYhat\}$ in mind)
	\begin{align*}
		\norm{\MelXhat^{\khat}-\MelXcheck^{\khat}}_{\IL^2(\rmv)}^2
		\mathds{1}_{\khat\in\cK[+]}\mathds{1}_{\evA}
	&=\mathds{1}_{\khat\in\cK[+]}\mathds{1}_{\evA}
		\big(4\norm{\MelXhat^{\khat}-\MelXcheck^{\khat}}_{\IL^2(\rmv)}^2- 2\sigmaYhat\pen_{\khat}^{\rmvhat}+2\norm{\MelXcheck^{\khat}-\MelXcheck^{\ko}}_{\IL^2(\rmv)}^2\big)\\
		&+\mathds{1}_{\khat\in\cK[+]}\mathds{1}_{\evA}
		\big(-\norm{\MelXhat^{\khat}-\MelXcheck^{\khat}}_{\IL^2(\rmv)}^2+2\sigmaYhat\pen_{\ko}^{\rmvhat}\big)\\
		&+\mathds{1}_{\khat\in\cK[+]}\mathds{1}_{\evA}
		\big(-2\norm{\MelXhat^{\khat}-\MelXcheck^{\khat}}_{\IL^2(\rmv)}^2+2\sigmaYhat\pen_{\khat}^{\rmvhat}\\&\hspace*{40ex}-2\sigmaYhat\pen_{\ko}^{\rmvhat}-2\norm{\MelXcheck^{\khat}-\MelXcheck^{\ko}}_{\IL^2(\rmv)}^2\big)\\
		&\leq \mathds{1}_{\khat\in\cK[+]}\mathds{1}_{\evA}
		\big(4\norm{\MelXhat^{\khat}-\MelXcheck^{\khat}}_{\IL^2(\rmv)}^2- 2\sigmaYhat\pen_{\khat}^{\rmvhat}                                                                                    +2\norm{\MelXcheck^{\khat}-\MelXcheck^{\ko}}_{\IL^2(\rmv)}^2\big)\\
		&\leq \mathds{1}_{\khat\in\cK[+]}\mathds{1}_{\evA}
		\big(4\norm{\MelXhat^{\khat}-\MelXcheck^{\khat}}_{\IL^2(\rmv)}^2- \sigma_{Y}^2\pen_{\khat}^{\rmvhat}+2\norm{\MelXcheck^{\khat}-\MelXcheck^{\ko}}_{\IL^2(\rmv)}^2\big)\\
		&\leq \mathds{1}_{\khat\in\cK[+]}
		\bigg(4\max_{k\in\cK[+]}\big\{\big(\norm{\MelXhat^{k}-\MelXcheck^{k}}_{\IL^2(\rmv)}^2- \tfrac{\sigma_{Y}^2}{4}\pen_{k}^{\rmvhat}\big)_+\big\}+2\norm{\MelXcheck^{\kn}-\MelXcheck^{\ko}}_{\IL^2(\rmv)}^2\bigg).
	\end{align*}
	which together with the elementary bounds 
        \begin{equation*}
	\norm{\MelXcheck^{\kn}-\MelXcheck^{\ko}}_{\IL^2(\rmv)}^2\leq 2 \norm{\MelXcheck^{\kn}-\MelX^{\kn}}_{\IL^2(\rmv)}^2 +  2\norm{\MelX^{\ko}-\MelX}_{\IL^2(\rmv)}^2
      \end{equation*}
 and  $\norm{\MelXhat^{\khat}-\MelXcheck^{\khat}}_{\IL^2(\rmv)}^2
	\mathds{1}_{\khat\in\cK[+]}\mathds{1}_{\evA^c}\leq\norm{\MelXhat^{\kn}-\MelXcheck^{\kn}}_{\IL^2(\rmv)}^2\mathds{1}_{\khat\in\cK[+]}\mathds{1}_{\evA^c}$
	implies
	\begin{align*}
		\norm{\MelXhat^{\khat}-\MelXcheck^{\khat}}_{\IL^2(\rmv)}^2 \mathds{1}_{\khat\in\cK[+]}&\leq\mathds{1}_{\khat\in\cK[+]}\bigg(\norm{\MelXhat^{\kn}-\MelXcheck^{\kn}}_{\IL^2(\rmv)}^2\mathds{1}_{\evA^c}
		\\&\hspace*{5ex}+ 4\max_{k\in\cK[+]}\left\{\big(\norm{\MelXhat^{k}-\MelXcheck^{k}}_{\IL^2(\rmv)}^2-
		\tfrac{\sigma_{Y}^2}{4}\pen_{k}^{\rmvhat}\big)_+\right\}\\&\hspace*{5ex}+4 \norm{\MelXcheck^{\kn}-\MelX^{\kn}}_{\IL^2(\rmv)}^2 +  4\norm{\MelX^{\ko}-\MelX}_{\IL^2(\rmv)}^2\bigg).
	\end{align*}
	Combining both cases $\khat\in\cK[+]$ and $\khat\in\cK[+]^c$
        we obtain the bound 
	\begin{align}\nonumber
		\norm{\MelXhat^{\khat}-\MelXcheck^{\khat}}_{\IL^2(\rmv)}^2\mathds{1}_{\khat\in\cK[>\ko]}\hspace*{-27ex}&\\\nonumber&
		\leq
		\mathds{1}_{\khat\in\cK[+]^c}\big(\norm{\MelXhat^{\kn}-\MelXcheck^{\kn}}_{\IL^2(\rmv)}^2\mathds{1}_{\evO^c}+2\sigmaYhat\pen_{\ko}^{\rmvhat}\mathds{1}_{\evO}\big) \\\nonumber
		&+  \mathds{1}_{\khat\in\cK[+]}\bigg(\norm{\MelXhat^{\kn}-\MelXcheck^{\kn}}_{\IL^2(\rmv)}^2\mathds{1}_{\evA^c}
		+ 4\max_{k\in\cK[+]}\big\{\big(\norm{\MelXhat^{k}-\MelXcheck^{k}}_{\IL^2(\rmv)}^2-
		\tfrac{\sigma_{Y}^2}{4}\pen_{k}^{\rmvhat}\big)_+\big\}\\\nonumber&\hspace*{10ex}+4
		\norm{\MelXcheck^{\kn}-\MelX^{\kn}}_{\IL^2(\rmv)}^2 +
		4\norm{\MelX^{\ko}-\MelX}_{\IL^2(\rmv)}^2\bigg)\\\nonumber
		&\leq\mathds{1}_{\khat\in\cK[>\ko]}
		\bigg(\norm{\MelXhat^{\kn}-\MelXcheck^{\kn}}_{\IL^2(\rmv)}^2
		(\mathds{1}_{\evO^c}+\mathds{1}_{\evA^c}) +
		2\sigmaYhat\pen_{\ko}^{\rmvhat}\mathds{1}_{\evO} \\\nonumber
		&\hspace*{10ex}+4\max_{k\in\cK[+]}\big\{\big(\norm{\MelXhat^{k}-\MelXcheck^{k}}_{\IL^2(\rmv)}^2-
		\tfrac{\sigma_{Y}^2}{4}\pen_{k}^{\rmvhat}\big)_+\big\}\\\label{re:kg:p3}&\hspace*{10ex}+4
		\norm{\MelXcheck^{\kn}-\MelX^{\kn}}_{\IL^2(\rmv)}^2 +
		4\norm{\MelX^{\ko}-\MelX}_{\IL^2(\rmv)}^2\bigg).
	\end{align}
	Making use of \eqref{re:kg:p2} and \eqref{re:kg:p3} we obtain
	\begin{align*}
		\norm{\MelXcheck^{\khat}-\MelXcheck^{\ko}}_{\IL^2(\rmv)}^2\mathds{1}_{\khat\in\cK[-]}
		+
          \norm{\MelXhat^{\khat}-\MelXcheck^{\khat}}_{\IL^2(\rmv)}^2\mathds{1}_{\khat\in\cK[>\ko]}
          \hspace*{-50ex}&\\
          &\leq
		\mathds{1}_{\khat\in\cK[<\ko]}\bigg(4
            \norm{\MelXcheck^{\ko}-\MelXhat^{\ko}}_{\IL^2(\rmv)}^2\\
                         &\hspace*{10ex}+2\norm{\MelXcheck^{\kn}-\MelX^{\kn}}_{\IL^2(\rmv)}^2+2\norm{\MelX}_{\IL^2(\rmv)}^2\mathds{1}_{\evO^c}+8\sigmaYhat\pen_{\ko}^{\rmvhat}\mathds{1}_{\evO}\bigg)\\
		&\phantom{=}+ \mathds{1}_{\khat\in\cK[>\ko]}
		\bigg(\norm{\MelXhat^{\kn}-\MelXcheck^{\kn}}_{\IL^2(\rmv)}^2
		(\mathds{1}_{\evO^c}+\mathds{1}_{\evA^c}) +
          2\sigmaYhat\pen_{\ko}^{\rmvhat}\mathds{1}_{\evO} \\
                         &
		\hspace*{10ex}+4\max_{k\in\cK[+]}\big\{\big(\norm{\MelXhat^{k}-\MelXcheck^{k}}_{\IL^2(\rmv)}^2-
		\tfrac{\sigma_{Y}^2}{4}\pen_{k}^{\rmvhat}\big)_+\big\}\\&\hspace*{10ex}+4
		\norm{\MelXcheck^{\kn}-\MelX^{\kn}}_{\IL^2(\rmv)}^2 +
		4\norm{\MelX^{\ko}-\MelX}_{\IL^2(\rmv)}^2\bigg)\\
		&\leq 4\norm{\MelX^{\ko}-\MelX}_{\IL^2(\rmv)}^2 + 4\norm{\MelXcheck^{\ko}-\MelXhat^{\ko}}_{\IL^2(\rmv)}^2+ 8\sigmaYhat\pen_{\ko}^{\rmvhat}\mathds{1}_{\evO}
		\\&\phantom{=}+4\norm{\MelXcheck^{\kn}-\MelX^{\kn}}_{\IL^2(\rmv)}^2+4\max_{k\in\cK[+]}\big\{\big(\norm{\MelXhat^{k}-\MelXcheck^{k}}_{\IL^2(\rmv)}^2-
		\tfrac{\sigma_{Y}^2}{4}\pen_{k}^{\rmvhat}\big)_+\big\}\\ &\phantom{=}+\norm{\MelXhat^{\kn}-\MelXcheck^{\kn}}_{\IL^2(\rmv)}^2
		(\mathds{1}_{\evO^c}+\mathds{1}_{\evA^c}) +2\norm{\MelX}_{\IL^2(\rmv)}^2\mathds{1}_{\evO^c}
	\end{align*}
	which together with \eqref{re:kg:p1} implies the claim
	\begin{align*}
		\norm{\MelXhat^{\khat}-
          \MelX}_{\IL^2(\rmv)}^2&\leq3\norm{\MelXhat^{\ko}-\MelXcheck^{\ko}}_{\IL^2(\rmv)}^2+3\norm{\MelX-\MelX^{\ko}}_{\IL^2(\rmv)}^2+3\norm{\MelXcheck^{\kn}-\MelX^{k_n}}_{\IL^2(\rmv)}^2\\&\phantom{=}+
		3
		\norm{\MelXhat^{\khat}-\MelXcheck^{\khat}}_{\IL^2(\rmv)}^2\mathds{1}_{\khat\in\cK[>\ko]}
		+3\norm{\MelXcheck^{\khat}-\MelXcheck^{\ko}}_{\IL^2(\rmv)}^2\mathds{1}_{\khat\in\cK[<\ko]}\\
		&\leq
		15\norm{\MelXhat^{\ko}-\MelXcheck^{\ko}}_{\IL^2(\rmv)}^2+15\norm{\MelX-\MelX^{\ko}}_{\IL^2(\rmv)}^2+ 24\sigmaYhat\pen_{\ko}^{\rmvhat}\mathds{1}_{\evO}\\&\phantom{=}+15\norm{\MelXcheck^{\kn}-\MelX^{k_n}}_{\IL^2(\rmv)}^2
		+12\max_{k\in\cK[+]}\big\{\big(\norm{\MelXhat^{k}-\MelXcheck^{k}}_{\IL^2(\rmv)}^2-
		\tfrac{\sigma_{Y}^2}{4}\pen_{k}^{\rmvhat}\big)_+\big\}\\&+3\norm{\MelXhat^{\kn}-\MelXcheck^{\kn}}_{\IL^2(\rmv)}^2
		(\mathds{1}_{\evO^c}+\mathds{1}_{\evA^c}) +6\norm{\MelX}_{\IL^2(\rmv)}^2\mathds{1}_{\evO^c}
	\end{align*}
	and completes the proof.
\end{proof}
\begin{proof}[\cref{co:riskboundadaptive}]
We start the proof with taking the expectation on both sides of the
upper bound given in \cref{re:kg} and making use of the concentration inequality in
\cref{lem:conc2}, which  for each $\ko\in\nset{k_n}$
leads to (similar to the proof of \cref{lem: 1})
\begin{align*}
  \IE\left[\norm{\MelXhat^{\hat{k}}-\MelX}^2_{\IL^2(\rmv)}\right] 
          & \leq
15\frac{1}{n}\IE\left[\norm{\mathds{1}_{[-\ko,\ko]}\mathbb{V}_Y\MelUhatrez\mathds{1}_{\mathfrak{M}}}^2_{\IL^2(\rmv)}\right]+15\norm{\mathds{1}_{[-\ko,\ko]^C}\MelX}_{\IL^2(\rmv)}^2\\&\phantom{=}+
24\IE\left[\sigmaYhat\pen_{\ko}^{\rmvhat}\mathds{1}_{\mho_{\ko}}\right]+15\IE\left[\norm{\mathds{1}_{\mathfrak{M}^C}\MelX^{k_n}}_{\IL^2(\rmv)}^2\right]
\\&\phantom{=}+15\IE\left[\norm{\MelUhatrez\mathds{1}_{\mathfrak{M}}(\MelU-\MelUhat)\MelX^{k_n}}_{\IL^2(\rmv)}^2\right]
+ \Const\cdot\eta_Y(1\vee\eta_U^2k_Y\Delta_{k_Y}^{\rmv})\cdot
          n^{-1}\\&\phantom{=}+3\IE\left[\norm{\mathds{1}_{[-\kn,\kn]} (\MelYhat-\MelY)\MelUhatrez\mathds{1}_{\mathfrak{M}}}_{\IL^2(\rmv)}^2
(\mathds{1}_{\mho_{\ko}^C}+\mathds{1}_{\evA^C})\right] \\&\phantom{=}+6\norm{\MelX}_{\IL^2(\rmv)}^2\Pz(\mho_{\ko}^C).
\end{align*}
The first, fourth and fith term in the last upper bound we
estimate with the help of \cref{lem:2} (line by line as in
\cref{subsec:3.3} and using
$\norm{\MelUrez\mathds{1}_{[-\ko,\ko]}}^2_{\IL^2(\rmv)}\leq \Delta_{\ko}^{\rmv_U}\delta_{\ko}^{\rmv_U}\ko$ as well as
the definition of $\eta_U$), which implies
\begin{align*}
  \IE\left[\norm{\MelXhat^{\hat{k}}-\MelX}^2_{\IL^2(\rmv)}\right] 
          &  \leq
\Const\eta_U^4\cdot\IE[X_1^{2(c-1)}]\cdot\Delta_{\ko}^{\rmv_U}\delta_{\ko}^{\rmv_U}\ko n^{-1}+15\norm{\mathds{1}_{[-\ko,\ko]^C}\MelX}_{\IL^2(\rmv)}^2\\&\phantom{=}+
24\IE\left[\sigmaYhat\pen_{\ko}^{\rmvhat}\mathds{1}_{\mho_{\ko}}\right]
+\Const\eta_U^4\norm{\MelX\big(1\vee|\MelU|^2m\big)^{-1/2}}_{\IL^2(\rmv)}^2\\&\phantom{=}+ \Const\cdot\eta_Y(1\vee\eta_U^2k_Y\Delta_{k_Y}^{\rmv})\cdot
          n^{-1} \\&\phantom{=}+3\IE\left[\norm{\mathds{1}_{[-\kn,\kn]} (\MelYhat-\MelY)\MelUhatrez\mathds{1}_{\mathfrak{M}}}_{\IL^2(\rmv)}^2
(\mathds{1}_{\mho_{\ko}^C}+\mathds{1}_{\evA^C})\right] \\&\phantom{=}+6\norm{\MelX}_{\IL^2(\rmv)}^2\Pz(\mho_{\ko}^C).
\end{align*}
Since $\Delta_{\ko}^{\rmvhat}\mathds{1}_{\mho_{\ko}}\leq
(9/4)\Delta_{\ko}^{\rmv_U}$ it follows
$\delta_{\ko}^{\rmvhat}\mathds{1}_{\mho_{\ko}}\leq\delta_{\ko}^{\rmv_U}(\log(9/4)/\log(3)+1)$
and hence $\pen_{\ko}^{\rmvhat}\mathds{1}_{\mho_{\ko}}\leq
\Const\Delta_{\ko}^{\rmv_U}\delta_{\ko}^{\rmv_U}\ko n^{-1}$, which
together with $\IE[\sigmaYhat]=\sigma_Y^2$ implies
  \begin{align}\nonumber
  \IE\left[\norm{\MelXhat^{\hat{k}}-\MelX}^2_{\IL^2(\rmv)}\right] 
          &   \leq15\norm{\mathds{1}_{[-\ko,\ko]^C}\MelX}_{\IL^2(\rmv)}^2+
\Const(\eta_U^4\cdot\IE[X_1^{2(c-1)}]+\sigma_Y^2)\cdot\Delta_{\ko}^{\rmv_U}\delta_{\ko}^{\rmv_U}\ko n^{-1}\\\nonumber
&\phantom{=}+\Const\eta_U^4\norm{\MelX\big(1\vee|\MelU|^2m\big)^{-1/2}}_{\IL^2(\rmv)}^2+ \Const\cdot\eta_Y(1\vee\eta_U^2k_Y\Delta_{k_Y}^{\rmv})\cdot
          n^{-1} \\\nonumber&\phantom{=}+3\IE\left[\norm{\mathds{1}_{[-\kn,\kn]} (\MelYhat-\MelY)\MelUhatrez\mathds{1}_{\mathfrak{M}}}_{\IL^2(\rmv)}^2
(\mathds{1}_{\mho_{\ko}^C}+\mathds{1}_{\evA^C})\right]\\\label{re:kg:e:co:p1}&\phantom{=} +6\norm{\MelX}_{\IL^2(\rmv)}^2\Pz(\mho_{\ko}^C).
\end{align}
Since $\norm{\MelUhatrez\mathds{1}_{\mathfrak{M}}}_{\IL^\infty}^2\leq m$ and
$\norm{\mathds{1}_{[-\kn,\kn]}}_{\IL^2(\rmv)}^2\leq 2
\kn\Delta_{\kn}^{\rmv}\leq 2 n\Delta_{1}^{\rmv}$ due to the definition \eqref{def:kn} of $\kn$ we obtain
\begin{align*}
\IE\left[\norm{\mathds{1}_{[-\kn,\kn]}
    (\MelYhat-\MelY)\MelUhatrez\mathds{1}_{\mathfrak{M}}}_{\IL^2(\rmv)}^2\mathds{1}_{\mho_{\ko}^C}\right]
  &\leq\IE\left[\norm{\mathds{1}_{[-\kn,\kn]}
    (\MelYhat-\MelY)}_{\IL^2(\rmv)}^2\right] m\Pz(\mho_{\ko}^C)\\&\leq
\sigma_Y^2 n^{-1}\norm{\mathds{1}_{[-\kn,\kn]}}_{\IL^2(\rmv)}^2m\Pz(\mho_{\ko}^C)\leq2\sigma_Y^2\Delta_{1}^{\rmv}m\Pz(\mho_{\ko}^C).
\end{align*}
Moreover, for each $t\in\Rz$ we have
\begin{equation*}
\IE\left[|\MelYhat(t)-\MelY(t)|^2\mathds{1}_{\evA^C}\right]\leq
\Const n^{-1} (\IE(Y_1^{4(c-1)})^{1/2}(\Pz(\evA^C)^{1/2}\leq \Const n^{-1} \eta_Y^{1/2}(\Pz(\evA^C)^{1/2}
\end{equation*}
 and hence by
exploiting  $\norm{\MelUhatrez\mathds{1}_{\mathfrak{M}}}_{\IL^\infty}^2\leq n$ and
$\norm{\mathds{1}_{[-\kn,\kn]}}_{\IL^2(\rmv)}^2\leq 2
n\Delta_{1}^{\rmv}$ we obtain
\begin{equation*}
\IE\left[\norm{\mathds{1}_{[-\kn,\kn]}
    (\MelYhat-\MelY)\MelUhatrez\mathds{1}_{\mathfrak{M}}}_{\IL^2(\rmv)}^2\mathds{1}_{\evA^C}\right]
\leq 
\Const\eta_Y^{1/2}\Delta_{1}^{\rmv}n(\Pz(\evA^C)^{1/2}.
\end{equation*}
Since $\mathfrak{A}^C \subseteq	\left\{	|\hat{\sigma}_Y-\sigma_{Y}| >
  {\sigma_{Y}}/{2}\right\}$ and $\sigma_{Y}\geq1$ by Markov's
inequality we have 
$\Pz(\evA^C)\leq \Const \IE[Y_1^{16(c-1)}]n^{-4}\leq \Const \eta_Y n^{-4}$. Combining the last
bounds we conclude 
\begin{equation*}
\IE\left[\norm{\mathds{1}_{[-\kn,\kn]}
    (\MelYhat-\MelY)\MelUhatrez\mathds{1}_{\mathfrak{M}}}_{\IL^2(\rmv)}^2(\mathds{1}_{\mho_{\ko}^C}+\mathds{1}_{\evA^C})\right]
\leq 2\sigma_Y^2\Delta_{1}^{\rmv}m\Pz(\mho_{\ko}^C)+
\Const \eta_Y\Delta_{1}^{\rmv}n^{-1}
\end{equation*}
which together with \eqref{re:kg:e:co:p1} and the definition of $\eta_X$ implies
\begin{align*}
  \IE\left[\norm{\MelXhat^{\hat{k}}-\MelX}^2_{\IL^2(\rmv)}\right]
                 &\leq15\cdot\norm{\mathds{1}_{[-\ko,\ko]^C}\MelX}_{\IL^2(\rmv)}^2+
  \Const\cdot(\eta_U^4\eta_X+\sigma_Y^2)\cdot\Delta_{\ko}^{\rmv_U}\delta_{\ko}^{\rmv_U}\ko n^{-1}\\\hfill
  &\phantom{=}+\Const\cdot\eta_U^4\norm{\MelX\big(1\vee|\MelU|^2m\big)^{-1/2}}_{\IL^2(\rmv)}^2+
  \Const\cdot\eta_Y(1\vee\eta_U^2k_Y\Delta_{k_Y}^{\rmv})\cdot
  n^{-1} \\&\phantom{=}+6(\sigma_Y^2\Delta_{1}^{\rmv}+\eta_X)m\Pz(\mho_{\ko}^C).
\end{align*}
Since the last bound is valid for all $\ko\in\nset{\kn}$ we
immediately obtain the claim \eqref{re:kg:e:co}, which completes the
proof.
\end{proof}
\begin{proof}[\cref{lem:8maxi}]
  The proof follows along the lines of the proof of Theorem 4.1 in
  \cite{NeumannReiss2009}. In order to show the claim, we need some
  definitions and notations. For two functions $l,u:\IR\to\IR$ with
  $l\leq u$
  introduce the bracket
  \begin{equation*}
    [l,u]:=\{f:\IR\to\IR:\, l\leq f \leq u\}.
  \end{equation*}
  For a set of functions $\mathscr{G}$ and $\varepsilon\in\pRz$ we denote by
  $\operatorname{N}_{[\cdot]}(\varepsilon,\mathscr{G})$ the minimum number of
  brackets $[l_i,u_i]$, satisfying
  $\IE[(u_i(Z_1)-l_i(Z_1))^2]\leq\varepsilon^2$, that are needed to cover
  $\mathscr{G}$. The associated bracking entropy integral is defined
  as
  \begin{equation*}
    \operatorname{J}_{[\cdot]}(\delta,\mathscr{G}):=  \int_{(0,\delta)}\sqrt{\log\operatorname{N}_{[\cdot]}(\varepsilon,\mathscr{G})}\rmd\lambda(\varepsilon),\quad\forall\delta\in\pRz.
  \end{equation*}
  Further, a function $\bar{f}$ is called an envelope of
  $\mathscr{G}$, if $|f|\leq \bar{f}$ for all
  $f\in\mathscr{G}$. Analogously as presented in
  \cite{NeumannReiss2009}, we aim to apply Lemma 19.34 and Corollary
  19.35 from \cite{Vaart1998}. Hence, we start by decomposing $c_m$
  into its real and imaginary part, namely
  \begin{equation*}
    \mathfrak{Re}(c_m(t)) =
    \frac{1}{\sqrt{m}}\sum_{j\in\nset{m}}\left\{Z_{j}^\beta\cos(\log(2\pi
      tZ_j))-\IE\left[Z_j^\beta\cos(2\pi
        t\log(Z_j))\right]\right\}
  \end{equation*}
  and
  \begin{equation*}
    \mathfrak{Im}(c_m(t)) =
    \frac{1}{\sqrt{m}}\sum_{j\in\nset{m}}\left\{Z_{j}^\beta\sin(\log(2\pi
      tZ_j))-\IE\left[Z_j^\beta\sin(2\pi t\log(Z_j))\right]\right\},
  \end{equation*}
  such that $c_m(t) = \mathfrak{Re}(c_m(t)) +\iota\cdot \mathfrak{Im}(c_m(t))$ for all $t\in\IR$.
  Therefore, define the following class of functions 
  \begin{equation*}
    \mathscr{G}_\beta:= \left\{ \pRz\ni z\mapsto
      \rmwbar(t)z^\beta\cos(2\pi
      t\log(z))\in\IR:t\in\IR\right\} \cup\left\{\pRz\ni z\mapsto
      \rmwbar(t)z^\beta\sin(2\pi t\log(z))\in\IR:t\in\IR\right\},
  \end{equation*}
	whose envelope is given by $f(z):=z^\beta$. Now applying Lemma 19.34 of \cite{Vaart1998} and following the argumentation of the proof of Corollary 19.35 within, we conclude 
	\begin{equation*}
		\IE\left[\norm{c_m}_{\IL^\infty(\rmwbar)}\right] \leq \Const\left(\sqrt{\IE[Z_1^{2\beta}]}+\operatorname{J}_{[\cdot]}(\sqrt{\IE[Z_1^{2\beta}]},\mathscr{G}_\beta)\right)\leq\Const\left(\eta+\operatorname{J}_{[\cdot]}(\eta,\mathscr{G}_\beta)\right).
	\end{equation*}
	As $\eta\in\pRz$, it suffices to show that the entropy integral is finite. Hence, inspired by \cite{Yukich1985}, we set
	\begin{equation}\label{lem:8maxi:p1}
		B_\varepsilon:=\inf\left\{b\in\pRz:\E[Z_1^{2\beta}\mathds{1}_{\{|\log(Z_1)|>b\}}]\leq\varepsilon^2\right\}\leq \left(\frac{\E[Z_1^{2\beta}|\log(Z_1)|^\gamma]}{\varepsilon^2}\right)^{\frac{1}{\gamma}}\leq \left(\frac{\eta}{\varepsilon^2}\right)^{\frac{1}{\gamma}}
        \end{equation}
        due to  the generalised Markov inequality.
	Furthermore, for grid points $t_j\in\Rz$ specified below, we define 
	\begin{equation*}
		g^{\pm}_j(z):=\left(\rmwbar(t_j)z^\beta\cos(2\pi t_j\log(z))\pm \varepsilon z^\beta\right)\mathds{1}_{[0,B_\varepsilon]}(|\log(z)|)\pm\norm{\rmwbar}_\infty z^\beta\mathds{1}_{(B_\varepsilon,\infty)}(|\log(z)|),
	\end{equation*}
	as well as 
\begin{equation*}
	h^{\pm}_j(z):=\left(\rmwbar(t_j)z^\beta\sin(2\pi t_j\log(z))\pm \varepsilon z^\beta\right)\mathds{1}_{[0,B_\varepsilon]}(|\log(z)|)\pm\norm{\rmwbar}_\infty z^\beta\mathds{1}_{(B_\varepsilon,\infty)}(|\log(z)|).
\end{equation*}
	We obtain
	\begin{align*}
		\E\left[\left(g^{+}_j(Z_1)-g^{-}_j(Z_1)\right)^2\right]
                &\leq4\varepsilon^2\E\left[Z_1^{2\beta}\mathds{1}_{[0,B_\varepsilon]}(|\log(Z_1)|)\right]+4\norm{\rmwbar}_\infty^2\E\left[Z_1^{2\beta}\mathds{1}_{\{|\log(Z_1)|>B_\varepsilon\}}\right]\\
                &\leq4\varepsilon^2\left(\E[Z_1^{2\beta}]+\norm{\rmwbar}_\infty^2\right).
	\end{align*}
	and analogously
	\begin{equation*}
		\E\left[\left(h^{+}_j(Z_1)-h^{-}_j(Z_1)\right)^2\right] \leq 4\varepsilon^2\left(\E[Z_1^{2\beta}]+\norm{\rmwbar}_\infty^2\right)
	\end{equation*}
	It remains to choose the grid points $t_j$ in such a way that
        the brackets cover the set $\mathscr{G}_\beta$. Let $t\in\IR$ be
        arbitrarily chosen and take some arbitrary grid point
        $t_j$. Then with the Lipschitz constant $L_{\rmwbar}$ of the
        density function $\rmwbar$, we have for $z\in\pRz$ evidently
        $|\rmwbar(t)z^\beta\cos(2\pi
        t\log(z))-\rmwbar(t_j)z^\beta\cos(2\pi t_j\log(z))|\leq
        z^\beta(\rmwbar(t)+\rmwbar(t_j))$
        and 
	\begin{align*}
		|\rmwbar(t)z^\beta\cos(2\pi
          t\log(z))&-\rmwbar(t_j)z^\beta\cos(2\pi t_j\log(z))|\\
		 &\leq |\rmwbar(t)-\rmwbar(t_j)|\cdot
                   z^\beta\cdot|\cos(2\pi t\log(z))|
                   \\&\phantom{=}+ |\rmwbar(t_j)|\cdot z^\beta\cdot |\cos(2\pi
                   t_j\log(z))-\cos(2\pi t\log(z))|\\
		 &\leq L_{\rmwbar}|t-t_j|\cdot z^\beta + 2\pi\norm{\rmwbar}_\infty\cdot z^\beta \cdot |t-t_j|\cdot B_\varepsilon.
	\end{align*}
	Hence, the function $\pRz\ni z\mapsto
        \rmwbar(t)z^\beta\cos(2\pi t\log(z))$ is contained in the bracket $[g_j^-,g_j^+]$, if
	\begin{equation}
		\min\left\{|t_j-t|(L_{\rmwbar} +2\pi\norm{\rmwbar}_\infty B_\varepsilon), \rmwbar(t)+\rmwbar(t_j)\right\} \leq \varepsilon.
	\end{equation}
	Thus, for integer $j\in[-J_{\varepsilon},J_{\varepsilon}]$ we are choosing the grid points in the following way:
	\begin{equation*}
		t_j = \frac{j\varepsilon}{(L_{\rmwbar}+2\pi\norm{\rmwbar}_\infty B_\varepsilon)}
	\end{equation*}
 where $J_{\varepsilon}$ is the smallest integer such that $t_{J_{\varepsilon}}$ is greater than or equal to 
	\begin{equation*}
		T_{\varepsilon} := \inf\bigg\{t\in\pRz:\,\sup_{v:|v|\geq t} \rmwbar(v) \leq {\varepsilon}/{2}\bigg\}
              \end{equation*}
        satisfying $\log (T_{\varepsilon})=O(\varepsilon^{-\kappa})$ with $\kappa:=1/(\rho+1/2)$.      
        Evidently, there are at most $2J_{\varepsilon}+1$ of those grid points  and, hence  $\operatorname{N}_{[\cdot]}(\varepsilon,\mathscr{G}) \leq 2(2J(\varepsilon)+1)$. 
        Keeping the bound \eqref{lem:8maxi:p1} in mind we also have $\log
        (B_\varepsilon/\varepsilon)=O(\log(\varepsilon^{-1-2/\gamma}))$ and
        thus from the  inequality
        \begin{equation*}
        J_{\varepsilon} \leq  T_{\varepsilon} (L_{\rmwbar} +2\pi\norm{\rmwbar}_\infty B_\varepsilon) \varepsilon^{-1}+1
      \end{equation*}
      we obtain $\log\operatorname{N}_{[\cdot]}(\varepsilon,\mathscr{G}) = O(\log(J(\varepsilon))) = O(\varepsilon^{-\kappa}+\log(\varepsilon^{-1-2/\gamma}))=O(\varepsilon^{-\kappa})$.
      Since $\kappa<2$ we conclude
      \begin{equation*}
        \int_{(0,\delta)}\sqrt{\log\operatorname{N}_{[\cdot]}(\varepsilon,\mathscr{G}_\beta)}\rmd\lambda(\varepsilon)<\infty,\quad\forall\delta\in\pRz
      \end{equation*}
  which completes the proof.
\end{proof}

\begin{proof}[\cref{bound:event}]For $m\in\nset{m_{\circ}}$
	we trivially have $\Pz(\mho_{k_m}^C)\leq m_{\circ}^2
	m^{-2}$. Therefore, let $m\in\Nz_{\geq  m_{\circ}}$. 
	We decompose $\MelUhat=\MelZhat+\MelUhat^{\mathrm{ub}}$ into a bounded part $\MelZhat$
	and an unbounded part $\MelUhat^{\mathrm{ub}}$ (similiar to
	\cref{rem:conc}). Precisely, for $b\in\Rz_{\geq1}$ (to be specified below)
	introduce
	the  random variables
	\begin{equation*}
		Z_j:=U_j\mathds{1}_{(0,b]}(U_j^{c-1})+\mathds{1}_{(b,\infty)}(U_j^{c-1}),\quad\forall
		j\in\nset{m}
	\end{equation*}
	and form accordinglgy for each $t\in\Rz$ the empirical Mellin transform
	\begin{align*}
		\MelZhat(t) &:=
		\frac{1}{m}\sum_{j\in\nset{m}}Z_j^{c-1+\iota2\pi t}=
		\frac{1}{m}\sum_{j\in\nset{m}}\{U_j^{c-1}\exp(\iota2\pi
                t\log(U_j))\mathds{1}_{(0,b]}(U_j^{c-1})+\mathds{1}_{(b,\infty)}(U_j^{c-1})\}.
	\end{align*}
	Evidently, the unbounded part
	$\MelUhat^{\mathrm{ub}}:=\MelUhat-\MelZhat$ satisfies
	\begin{equation*}
		\MelUhat^{\mathrm{ub}}(t) =
		\frac{1}{m}\sum_{j\in\nset{m}}(U_j^{c-1+\iota2\pi t}-1)\mathds{1}_{(b,\infty)}(U_j^{c-1}),\quad\forall
		t\in\Rz.
	\end{equation*}
	Exploiting the decomposition we obtain the elementary bound
	\begin{multline}\label{bound:event:p1}
		\Pz(\mho_{k_m}^C)\leq \IP\left(\exists t\in[-k_m,k_m]:\,|\MelZhat(t)-\IE[\MelZhat(t)]|>\frac{1}{6}|\MelU(t)| \right) \\+ \IP\left(\exists t\in[-k_m,k_m]:\, |\MelUhat^{\mathrm{ub}}(t)-\IE[\MelUhat^{\mathrm{ub}}(t)]|>\frac{1}{6}|\MelU(t)| \right)
	\end{multline}
	where we estimate separately the two terms of the  bound starting with the first one. Evidently, multiplying with the
	density function $\rmwbar$ given in \eqref{lem:8maxi:e2} and
	making use of the definition of $k_m$ given in \eqref{bound:event:km} we have 
	\begin{multline*}
		\IP\left(\exists
		t\in[-k_m,k_m]:\,|\MelZhat(t)-\IE[\MelZhat(t)]|>\frac{1}{6}|\MelU(t)|
		\right) \\\leq  \IP\left(\exists
		t\in[-k_m,k_m]:\,\rmwbar(t)|\MelZhat(t)-\IE[\MelZhat(t)]|>\tfrac{1}{6}\inf_{s\in[-k_m,k_m]}\hspace*{-3ex}\rmwbar(s)|\MelU(s)|
		\right)\\\hfill\leq \IP\left(\exists
		t\in[-k_m,k_m]:\,\rmwbar(t)|\MelZhat(t)-\IE[\MelZhat(t)]|>\tau_mm^{-1/2}\right).
	\end{multline*}
	By continuity of $t\mapsto\rmwbar(t)(\MelZhat(t)-\IE[\MelZhat(t)])$ we obtain 
	\begin{equation*}
		\IP\left(\exists
		t\in[-k_m,k_m]:\,\rmwbar(t)|\MelZhat(t)-\IE[\MelZhat(t)]|>\tau_mm^{-1/2}\right)\leq 
		\Pz\left(\sup_{t\in\mathcal T_m}|\overline{\nu}_{t}|>\tau_mm^{-1/2}\right)
	\end{equation*}
	setting $\mathcal T_m:=[-k_m,k_m]\cap\IQ$ and
	$\overline{\nu}_{t}=m^{-1}\sum_{i\in\nset{m}}\left\{\nu_{t}(Z_i)-\E\left(\nu_{t}(Z_i)\right)\right\}$
	with $\nu_{t}(z):=z^{c-1+\iota2\pi t}\rmwbar(t)$ for $z\in\Rz$. Observe
	that each $Z_i$ takes values in $\mathcal Z:=\{z\in\pRz:z^{c-1}\in(0,b]\}$ only. 
	Since $\mathcal T_m\subset\Rz$ is countable we eventually apply Talagrand's inequality given in Lemma
	\cref{lem:Talagrand:2} where we need to determine the three
	quantities $\psi$, $\Psi$ and $\tau$. Consider
	$\psi$ first.   Evidently, since $\norm{\rmwbar}_{\infty}=1$ we have
	\begin{equation*} 
		\sup_{t\in\Rz}\left\{\sup_{z\in\mathcal Z}|\nu_{t}(z)|\right\}\leq  b\norm{\rmwbar}_{\infty}=b=:\psi.
	\end{equation*} 
	Consider next $\tau$. Making use of
        \begin{equation*}
          \E[Z_1^{2(c-1)}]=\E[U_1^{2(c-1)}\mathds{1}_{(0,b]}(U_1^{c-1})+\mathds{1}_{(b,\infty)}(U_1^{c-1})]\leq
          \E[U_1^{2(c-1)}]\leq\eta_U^2
      \end{equation*}
 (keep in mind that $b\in\Rz_{\geq1}$
	and $\norm{\rmwbar}_{\infty}^2=1$) for each $t\in\IR$ we have
	\begin{equation*}
		\IE\big[|\nu_{t}(Z_1)|^2]\leq
		\norm{\rmwbar}_{\infty}^2\IE\big[Z_1^{2(c-1)}\big]\leq \eta_U^2=:\tau.
	\end{equation*}
	Finally, consider $\Psi$. Recalling the normalised Mellin function
	process $c_m$ defined in \eqref{lem:8maxi:e1} (with $\beta=c-1$) for all
	$t\in\Rz$ we have
	$m^{-1/2}\rmwbar(t)c_m(t)=\overline{\nu}_{t}$. Since $(\E[Z_1^{2(c-1)}])^{1/2}\leq (\E[U_1^{2(c-1)}])^{1/2}\leq \eta_U$ and 
	\begin{equation*}
		\E[Z_1^{2(c-1)}|\log(Z_1)|^\gamma]=\E[U_1^{2(c-1)}|\log(U_1)|^\gamma]\mathds{1}_{(0,b]}(U_j^{c-1})]\leq
		\E[U_1^{2(c-1)}|\log(U_1)|^\gamma]\leq \eta_U
	\end{equation*}
	from \cref{lem:8maxi} it follows
	\begin{equation*}
		\E\left[\sup_{t\in \mathcal T_m}|\overline{\nu}_{t}|\right]=m^{-1/2}\E\left[\sup_{t\in \mathcal T_m}\rmwbar(t)|c_m(t)|\right]
		\leq m^{-1/2}\IE\left[\norm{c_m}_{\IL^\infty(\rmwbar)}\right]\leq m^{-1/2}\Const(\eta_U,\rho)=:\Psi.
	\end{equation*}
	Due to \cref{lem:Talagrand:2} with
	$\kappa_m:=(\tau_m-2\Const(\eta_U,\rho))m^{-1/2}\in\pRz$ we obtain
	\begin{align*}
		\IP\left(\exists
		t\in[-k_m,k_m]:\,|\MelZhat(t)-\IE[\MelZhat(t)]|>\frac{1}{6}|\MelU(t)|
          \right)
          \hspace*{-50ex}&\\&\leq
		\Pz\left(\sup_{t\in\mathcal T_m}|\overline{\nu}_{t}|>\tau_mm^{-1/2}\right)
		=  \Pz\left(\sup_{t\in\mathcal T_m}|\overline{\nu}_{t}|>m^{-1/2}2\Const(\eta_U,\rho) +\kappa_m\right)\\&\leq 3
		\exp\left(-\Const_{\mathrm{tal}}
		\big(\frac{(\tau_m-2\Const(\eta_U,\rho))^2}{\eta_U^2}\wedge\frac{m^{1/2}(\tau_m-2\Const(\eta_U,\rho))}{b}\big)\right).
	\end{align*}
	Setting
	$b=2m^{1/2}\eta_U^2/(\tau_m-2\Const(\eta_U,\rho))=m^{1/2}(\log
	m)^{-1/2}\eta_U\Const_{\mathrm{tal}}^{1/2}\in\Rz_{\geq1}$ (keep \eqref{bound:event:mo} in mind) it  follows
	\begin{equation}\label{bound:event:p2}
		\IP\left(\exists
		t\in[-k_m,k_m]:\,|\MelZhat(t)-\IE[\MelZhat(t)]|>\frac{1}{6}|\MelU(t)|
		\right)\leq 3\exp\left(-\Const_{\mathrm{tal}}
		\frac{(\tau_m-2\Const(\eta_U,\rho))^2}{2\eta_U^2}\right) = 3 m^{-2}.
	\end{equation}
	Consider the second term  on the right hand side of \eqref{bound:event:p1}. For
	each $t\in\Rz$ (keep $b\in\IR_{\geq1}$ in mind) we have
	\begin{align*}
		|\MelUhat^{\mathrm{ub}}(t)]|&\leq\frac{1}{m}\sum_{j\in\nset{m}} |U_j^{c-1+\iota2\pi t}-1|\mathds{1}_{(b,\infty)}(U_j^{c-1})
		\\&\leq\frac{1}{m}\sum_{j\in\nset{m}} (U_j^{c-1}+1)\mathds{1}_{(b,\infty)}(U_j^{c-1})\leq\frac{2}{m}\sum_{j\in\nset{m}} 
		U_j^{c-1}\mathds{1}_{(b,\infty)}(U_j^{c-1})
	\end{align*}
	and  hence   $|\IE[\MelUhat^{\mathrm{ub}}(t)]|\leq 2
	b^{-1}\IE[U_1^{2(c-1)}]\leq 2
	b^{-1}\eta_U^2=m^{-1/2}(\tau_m-2\Const(\eta_U,\rho))$.
	Multiplying the
	density function $\rmwbar:\Rz\to(0,1]$ given in \eqref{lem:8maxi:e2} and
	making use of the definition \eqref{bound:event:km} of $k_m$ 
	 it follows (with $\Const(\eta_U,\rho)\in\IR_{\geq1}$)
	\begin{align*}
		\IP\bigg(\exists & t\in[-k_m,k_m]:\,
		|\MelUhat^{\mathrm{ub}}(t)
		-\IE[\MelUhat^{\mathrm{ub}}(t)]|>\frac{1}{6}|\MelU(t)|\bigg)&\\
		&\leq \IP\Bigg(\exists t\in[-k_m,k_m]:\, |\MelUhat^{\mathrm{ub}}(t)| >\frac{1}{6}\rmwbar(t)|\MelU(t)|-|\IE[\MelUhat^{\mathrm{ub}}(t)]| \Bigg)\\ 
		&\leq
		\IP\Bigg(\frac{2}{m}\sum_{j\in\nset{m}}U_j^{c-1}\mathds{1}_{(b,\infty)}(U_j^{c-1})>\frac{1}{6}\inf_{v\in[-k_m,k_m]}\rmwbar(v)|\MelU(v)|-m^{-1/2}(\tau_m-2\Const(\eta_U,\rho))\Bigg)\\
		&\leq
		\IP\left(\sum_{j\in\nset{m}}U_j^{c-1}\mathds{1}_{(b,\infty)}(U_j^{c-1})>\Const(\eta_U,\rho)m^{1/2}\right)\\&\leq
		m^{1/2}\IE\left[U_1^{c-1}\mathds{1}_{(b,\infty)}(U_1^{c-1})\right]
		\leq m^{1/2}b^{-6}\IE[U_1^{7(c-1)}]
		\\&\leq\Const_{\mathrm{tal}}^{-3}\eta_U  m^{-1/2}(\log m)^{3}   m^{-2}
		\leq    (6/e)^3\Const_{\mathrm{tal}}^{-3}\eta_U
           m^{-2}\leq 11\Const_{\mathrm{tal}}^{-3}\eta_U m^{-2}.
	\end{align*}
	Combining the last upper bound, the upper bound \eqref{bound:event:p2} and
	the decomposition \eqref{bound:event:p1} we obtain the claim
	 which
	completes the proof. 
\end{proof}

	\vspace{2cm}
	\centering{\Large{\scshape\centering{{Acknowledgement}}}}\\
	\normalsize{This work is funded by Deutsche Forschungsgemeinschaft (DFG, German Research Foundation) under Germany’s Excellence Strategy EXC-2181/1-39090098 (the Heidelberg STRUCTURES Cluster of Excellence)}
	\bibliography{MUDUNER.bib}
\end{document}